\newtheorem{thm}{Theorem}[section]
\newtheorem{prop}[thm]{Proposition}
\newtheorem{cor}[thm]{Corollary}
\newtheorem{lem}[thm]{Lemma}
\newtheorem*{thmnn}{Theorem}
\theoremstyle{definition}
\newtheorem{defn}[thm]{Definition}
\newtheorem{rmk}[thm]{Remark}
\begin{document}

\title{Generalized Hyperbolic Spaces Associated with Arbitrary Quadratic Forms}

\author{Shaul Zemel}

\maketitle


\section*{Introduction}

Clifford algebras were observed, over a century ago, to play a very important role in the theory of quadratic spaces. Their group of units contain two natural subgroups, the Clifford group acting on the quadratic space itself, and the paravector Clifford group, acting on the extension of the space by the scalars as an extended quadratic space. The classical theory involved real quadratic spaces (and sometimes complex ones), initially in the context of the Newtonian physics of movements in space, but as the understanding of the algebraic theory behind these notions became deeper, the natural definitions over more general fields became abundant. Allowing the base field to become a more general commutative ring led to interesting additional features---see \cite{[Ba]}, \cite{[Mc]}, and \cite{[Z]} among the vast literature on the subject and the many different approaches to it.

\smallskip

The simple observation that the Clifford algebra of the hyperbolic plane is a matrix algebra has the consequence that the Clifford algebra associated with the direct sum of a quadratic space $(V,q)$ with a hyperbolic plane is the matrix algebra over the Clifford algebra $(V,q)$ itself. This lies in the heart of Vahlen's construction of the groups that are now named after him, consisting of certain $2\times2$ matrices over a Clifford algebra. For particular real spaces, Vahlen could show that his group operates on an appropriate space (now known to be a real hyperbolic space) via M\"{o}bius transformations, a fact that was established in more details and more generality in \cite{[Ah]}. This generalizes the well-known fact that the 2-dimensional and 3-dimensional hyperbolic spaces are the symmetric spaces for $\operatorname{SL}_{2}(\mathbb{R})$ and $\operatorname{SL}_{2}(\mathbb{C})$ respectively, identifying these groups as the spin groups of respective signatures $(2,1)$ and $(3,1)$. There is also a paravector analogue, considered, for example, in \cite{[Ma]}.

Now, \cite{[EGM]} extended the definition of the (paravector) Vahlen group to any quadratic space over any field, and established the equivalent conditions for defining them in case the vector space is strongly anisotropic, a technical condition that is required for the only elements of the Clifford algebra whose natural action preserves paravectors are those of the paravector Clifford group and 0. The paper \cite{[Mc]} found the exact definition for the entries of the Vahlen groups, and showed that the proofs from \cite{[EGM]} work both for the usual and the paravector Vahlen groups, for quadratic modules, non-degenerate as well as degenerate, over much more general rings. The Vahlen group is then isomorphic to the scalar norm elements of the Clifford group of the direct sum of $(V,q)$ with a hyperbolic plane, and the paravector one is isomorphic to the scalar norm elements of the even Clifford group of a slightly larger quadratic space. The structure of the Clifford algebras in the degenerate case (over fields) is briefly investigated in \cite{[Ab]}, and more details are given in the predecessor \cite{[Z]} of the current paper.

\smallskip

The paper \cite{[EGM]} presents the action of the real Vahlen group (which in our terminology is the special paravector Vahlen group) of a definite real quadratic space on the hyperbolic space, with the three different models of that space: The hyperboloid model, the half-space model, and the ball model. The goal of the current paper is to extend this action to any Vahlen group, on both the hyperboloid model and the half-space model. The former, which is an orthogonal action, is rather straightforward. However, the action on the half-space model, using M\"{o}bius transformations, is more delicate, and requires a certain completion at infinity for working properly. The ball model requires additional assumptions, and we do not consider it in this paper. More precisely, we establish the following results.
\begin{thmnn}
To a quadratic space $(V,q)$ over a field $\mathbb{F}$ of characteristic different from 2, with the choice of a scalar $c$ from $\mathbb{F}$, one attaches a space $\mathbf{H}_{V,q}^{c}$, on which the Vahlen group operates transitively via M\"{o}bius transformations. This is a model for the action of the Vahlen group, via its image as an orthogonal group, on the set of vectors of quadratic value $c$ in the direct sum of $(V,q)$ and a hyperbolic plane, with the set of vectors that are perpendicular to the entire space excluded when $c=0$.
\end{thmnn}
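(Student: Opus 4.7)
The plan is to build the model $\mathbf{H}_{V,q}^{c}$ as a half-space and then verify, step by step, that it is isomorphic as a Vahlen-set to the level set $\{w\in V\oplus H : q_{\mathrm{tot}}(w)=c\}$ inside the extended space $W=V\oplus H$ (with $H$ a hyperbolic plane spanned by isotropic vectors $e,f$ paired as $(e,f)=1$), after excluding the radical direction when $c=0$. First I would fix notation: write $w=v+\alpha e+\beta f$ with $v\in V$, so that $q_{\mathrm{tot}}(w)=q(v)+2\alpha\beta$, and recall from the discussion above that the Vahlen group (resp. paravector Vahlen group) is the scalar-norm part of the (even) Clifford group of $W$, hence acts $\mathbb{F}$-orthogonally on $W$. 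The orthogonal action preserves the level set of $q_{\mathrm{tot}}$, and preserves the radical when $c=0$, so step one — the hyperboloid model — is essentially automatic.

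Next I would set up the half-space model and the change of coordinates. Define $\mathbf{H}_{V,q}^{c}$ to consist of pairs $(x,t)$ with $x\in V$ and $t\in\mathbb{F}$ subject to the relation $q(x)+(\text{quadratic correction in }t)=c$, together with a formal completion $\partial\mathbf{H}_{V,q}^{c}$ at infinity obtained by allowing $t$ to degenerate; this is the ``completion at infinity'' mentioned in the introduction. The passage from hyperboloid to half-space is the projection $w=v+\alpha e+\beta f\mapsto(v/\beta,1/\beta)$ when $\beta\neq 0$, and sends the locus $\beta=0$ to the boundary piece at infinity. I would verify that this map is a bijection (using the level equation to recover $\alpha$ from $v$ and $\beta$) and that it identifies the orbit of scalar dilations on $W$ with a natural normalization of $\mathbf{H}_{V,q}^{c}\cup\partial\mathbf{H}_{V,q}^{c}$.

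The central computation is then to transport the orthogonal Vahlen action through this bijection and recognize the result as a Möbius transformation. Using the matrix presentation $\left(\begin{smallmatrix}a&b\\c&d\end{smallmatrix}\right)$ of a Vahlen element, whose entries live in the Clifford algebra of $V$ and satisfy the conditions identified in \cite{[Mc]}, the induced map on the half-space should take the form $(x,t)\mapsto$ a suitable $(ax+b)(cx+d)^{-1}$-style expression, with $t$ rescaled by the Clifford norm of $cx+d$; I would check this by expanding the orthogonal action of $\gamma\in\Gamma$ on $v+\alpha e+\beta f$ and comparing components in $V$, $e$, $f$. At this point transitivity reduces to showing that translations (strictly upper triangular Vahlen elements), dilations (diagonal elements realized by scalar-norm vectors acting by twisted conjugation), and the inversion $\left(\begin{smallmatrix}0&-1\\1&0\end{smallmatrix}\right)$ already suffice to connect any two points of $\mathbf{H}_{V,q}^{c}$, which in the hyperboloid picture is the transitivity of the orthogonal group on the level set of $q_{\mathrm{tot}}$.

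The main obstacle, and where I would concentrate the most care, is the denominator $cx+d$: when $x$ lies in the zero set of $cx+d$, or when $\beta\to 0$, the naive Möbius formula is undefined, and the proof must show that these cases correspond exactly to the boundary component $\partial\mathbf{H}_{V,q}^{c}$ and that the extended action on the completion is well-defined and compatible with the orthogonal action fixing the $\beta=0$ hyperplane. This is also where the hypothesis that $cx+d$ be invertible in the Clifford algebra (as in \cite{[EGM]}, \cite{[Mc]}) has to be interpreted; once it is handled, the transitivity and the identification with the hyperboloid model follow without further difficulty.
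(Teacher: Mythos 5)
Your overall route is the same as the paper's: identify the level set $\mathbf{K}_{V,q}^{c}=\{w \in V_{U}:q_{U}(w)=c\}\setminus V^{\perp}$ with a half-space via the $f$-coefficient, transport the twisted-conjugation action of $\Gamma^{\mathbb{F}^{\times}}(V_{U},q_{U})$ through that bijection, recognize it as a M\"{o}bius transformation, and complete at infinity. But as written the proposal defers precisely the step that constitutes the paper's actual content. Saying the boundary is ``obtained by allowing $t$ to degenerate'' is not a construction: the key point is that a single point at $\infty$ (as in the classical M\"{o}bius completion) does \emph{not} work here, and the correct boundary is the set of points $(\infty u)_{b}+\infty\sigma_{c}$ indexed by a vector $u \in V$ with $q(u)=c$ \emph{and} an additional scalar $b\in\mathbb{F}$ recording an infinitesimal translation (with $b\neq0$ forced when $c=0$ and $u \in V^{\perp}$). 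These pairs are exactly what is needed to biject with the excluded locus $\{u+be\}$ of the hyperboloid where the $f$-coefficient vanishes, and one must then check (i) that when $N(\gamma z+\delta)=0$ the image lands on such a boundary point with $b=N(\alpha z+\beta)/t\det\eta$, and (ii) that the norms and the vector $(\alpha z+\beta)\overline{(\gamma z+\delta)}$ become \emph{linear} in $\infty$ when evaluated at a boundary point, so that the action extends to arguments on the boundary. Without specifying this structure the claimed bijection with the hyperboloid, and hence the well-definedness of the extended action, is not established.

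Two secondary points. First, the M\"{o}bius variable is not $x \in V$ with $t$ carried along separately: the paper works in the auxiliary quadratic space $V_{\mathbb{F}}^{c}=V\oplus\mathbb{F}\sigma_{c}$ with $\sigma_{c}^{2}=-c$ in its Clifford algebra, and the formula is $z\mapsto(\alpha z+\beta)(\gamma z+\delta)^{-1}$ for $z=x+t\sigma_{c}$; the denominator $\gamma z+\delta$ (and its norm, which involves $q(x)-ct^{2}$) depends on $t$, so for $c\neq0$ your ``$(ax+b)(cx+d)^{-1}$ with $t$ rescaled by $N(cx+d)$'' is not the right expression. Relatedly, the half-space is \emph{all} of $V\times\mathbb{F}^{\times}$ (the level equation is used to solve for the $e$-coefficient, not to cut down the parameters), so it should not be described as a locus $q(x)+\cdots=c$. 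Second, transitivity cannot simply be quoted as Witt's theorem for the orthogonal group: the Vahlen group only surjects onto $\operatorname{O}_{V^{\perp}}(V_{U},q_{U})$, the form may be degenerate, and for $c=0$ one must also connect the boundary (including the excluded radical directions) to the interior. The paper instead proves transitivity constructively: the subgroup $\mathbb{F}^{\times}\rtimes(V,+)$ of upper-triangular matrices already acts transitively on the interior, and explicit matrices with $\alpha=0$, $\gamma=-\beta=1$, $\delta \in V$ move any boundary point into the interior; some such argument is needed in place of your appeal to orthogonal-group transitivity.
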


\begin{thmnn}
Given $(V,q)$ over $\mathbb{F}$ and $c$ as above, there is a space $\widetilde{\mathbf{H}}_{V,q}^{c}$ with a transitive action of the paravector Vahlen group, that is also described in terms of M\"{o}bius transformations. The linear model is based on the set of vectors of norm $c$ is a slightly larger quadratic space, where again for $c=0$ the vectors that are orthogonal to every vector in the space are excluded.
\end{thmnn}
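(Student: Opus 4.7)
The plan is to mirror the proof of the preceding theorem, replacing the Vahlen group and the quadratic space $(V,q)\perp H$ (where $H$ denotes a hyperbolic plane) by their paravector analogues. The ``slightly larger quadratic space'' is the one described in the introduction: start with the paravector extension $\widehat{V}:=\mathbb{F}e\oplus V$ of $(V,q)$, where $e$ is an extra generator of norm $1$ orthogonal to $V$, and set $\widetilde{W}:=\widehat{V}\perp H$. By the identification recalled from \cite{[Mc]}, the paravector Vahlen group of $(V,q)$ is isomorphic to the scalar-norm elements of the even Clifford group of $\widetilde{W}$, and hence acts orthogonally on $\widetilde{W}$. This action visibly preserves the level set $Q_{c}:=\{w\in\widetilde{W}:\widetilde{q}(w)=c\}$, and for $c=0$ we exclude those $w$ lying in the radical (coming from the radical of $q$), just as in the previous theorem.

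Next I would define $\widetilde{\mathbf{H}}_{V,q}^{c}$ as a parametric model of $Q_{c}$: fix a hyperbolic basis $\{f,f'\}$ of $H$, project a point of $Q_{c}$ onto the hyperplane complementary to $\mathbb{F}f$, and encode the result by its paravector part together with the ``height'' coefficient of $f'$. The resulting set sits inside $\widehat{V}\times\mathbb{F}$ and must be completed by a stratum at infinity corresponding to vectors on which the projection degenerates. To translate the orthogonal action into M\"{o}bius form, I would apply the standard dictionary between Vahlen matrices $\bigl(\begin{smallmatrix}\alpha&\beta\\\gamma&\delta\end{smallmatrix}\bigr)$ over the Clifford algebra and the transformations $z\mapsto(\alpha z+\beta)(\gamma z+\delta)^{-1}$, now with $z$ ranging over paravectors. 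The compatibility between this formula and the orthogonal action on $\widetilde{W}$ is enforced by the explicit characterisation of paravector Vahlen entries in \cite{[Mc]}, and gives the desired M\"{o}bius description.

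Transitivity is then a two-step argument. On the linear side, Witt's extension theorem, available over a field of characteristic different from $2$, provides transitivity of the full orthogonal group of $\widetilde{W}$ on $Q_{c}$ (with the radical removed in the isotropic case). To descend to the paravector Vahlen group one shows, as in the previous theorem, that the orthogonal maps moving any fixed base point of $Q_{c}$ to any other can be realised by products of reflections whose associated Clifford elements preserve the paravector subspace $\widehat{V}$, i.e.\ lie in the paravector (even) Clifford group; the scalar-norm condition is absorbed by rescaling.

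The main obstacle I expect is the M\"{o}bius side rather than the orthogonal one: one must verify that $(\alpha z+\beta)(\gamma z+\delta)^{-1}$ actually returns a paravector whenever $z$ is a paravector, and extend the action continuously to the boundary stratum when $\gamma z+\delta$ is $0$ or a zero-divisor in the Clifford algebra. This is precisely the place where the ``completion at infinity'' emphasised in the introduction does its work, and where the defining conditions for the entries of a paravector Vahlen matrix are used in full. Once this compatibility has been checked, passing between the orthogonal model and the M\"{o}bius model reduces to the same algebraic dictionary employed in the previous theorem.
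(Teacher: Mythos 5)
Your overall architecture is the paper's: the linear model sits inside the orthogonal sum $V_{U,\mathbb{F}}$ of $(V,q)$, a hyperbolic plane, and one extra line; the open part of $\widetilde{\mathbf{H}}_{V,q}^{c}$ parametrizes the vectors of value $c$ that pair nontrivially with an isotropic basis vector (Lemma \ref{cnorm}); and Vahlen matrices act by $\tau\mapsto(\alpha\tau+\beta)(\gamma\tau+\delta)^{-1}$. (One sign to fix: the extra generator must have \emph{quadratic value} $-1$, not $+1$, so that $\mathbb{F}\oplus V$ with $q_{\mathbb{F}}(a+v)=q(v)-a^{2}$ embeds isometrically.) Where you genuinely diverge is transitivity. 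You propose Witt extension for the orthogonal group of the big space followed by descent; the paper never invokes Witt there, but instead shows that the affine subgroup $\mathbb{F}^{\times}\rtimes(\mathbb{F}\oplus V,+)$ of upper-triangular Vahlen matrices is already transitive on the open part, joins each boundary point to that orbit by an explicit matrix with $\alpha=0$, $\gamma=-\beta=1$ and $\delta$ a suitable paravector, and treats $\widetilde{\operatorname{SV}}(V,q)$ separately (Lemma \ref{FxVFc}, Proposition \ref{transpara}). Your route has the advantage of handling interior and boundary vectors of $\widetilde{\mathbf{K}}_{V,q}^{c}$ uniformly on the linear side, but it has an unaddressed step: by Theorem \ref{pvVah} and Corollary \ref{Normgrade} the paravector Vahlen group surjects only onto $\operatorname{SO}_{V^{\perp}}(V_{U,\mathbb{F}},q_{U,\mathbb{F}})$, not the full orthogonal group, so Witt transitivity does not descend for free; you must also observe that the stabilizer of a vector of value $c$ meets the determinant $-1$ coset (true here, since its orthogonal complement modulo the radical has dimension at least two and contains anisotropic vectors, hence a reflection), and in the degenerate case you need the refined statement for $\operatorname{O}_{V^{\perp}}$ acting on vectors outside $V^{\perp}$ rather than classical Witt.

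The larger gap is that you explicitly defer the M\"{o}bius-side verification, and that is where essentially all of the section's work lies. The boundary stratum is not just ``where the projection degenerates'': a single point at infinity does not carry enough data for the action to be well defined, and the paper must parametrize boundary points by pairs $(\lambda,b)$ with $\lambda\in\mathbb{F}\oplus V$, $q_{\mathbb{F}}(\lambda)=c$, $b\in\mathbb{F}$, recording the linear term in the extended quadratic value $\infty^{2}c-\infty b$ of $\infty\lambda$ (Definition \ref{bdpara}). Acting on such a point then requires expanding $N(\gamma\tau+\delta)$, $N(\alpha\tau+\beta)$ and $(\alpha\tau+\beta)\overline{(\gamma\tau+\delta)}$ to first order in $\infty$ and checking that the leading coefficients again satisfy the norm identity of Corollary \ref{VqFMoeb} (this is Lemma \ref{inflin} and Propositions \ref{denomvan} and \ref{bdeta}). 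Until those computations are done, the claims that the M\"{o}bius action is everywhere defined, lands back in $\widetilde{\mathbf{H}}_{V,q}^{c}$, and agrees with the orthogonal action on $\widetilde{\mathbf{K}}_{V,q}^{c}$ are not established; your proposal correctly locates this as the main obstacle but does not overcome it.
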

For the exact formulations, see Theorems \ref{thmVqc} and \ref{thmpara} below. The results for the two types of Vahlen groups are unrelated but goes along the exact same lines, but some delicate details are different, and we give the detailed proofs for both cases.

\smallskip

The paper is divided into three sections. Section \ref{VahGrps} skims through the basic definitions of Clifford algebras and Vahlen groups that are required for stating and proving our results. Section \ref{ActVah} presents the results for the ordinary Vahlen group, and Section \ref{ActPara} carries out the same for the paravector Vahlen groups.

\section{Clifford Algebras and Vahlen Groups \label{VahGrps}}

Let $\mathbb{F}$ be a field of characteristic different from 2, let $V$ be a finite-dimensional vector space over $\mathbb{F}$, and let $q$ be a quadratic form on $V$. We denote the associated symmetric bilinear form by $(u,v):=q(u+v)-q(u)-q(v)$ as usual. We do not assume that $q$ non-degenerate, and we denote the kernel of the associated map from $V$ to its dual by
\begin{equation}
V^{\perp}:=\{v \in V|(u,v)=0\ \forall u \in V\},\qquad\mathrm{as\ well\ as\ set}\qquad\overline{V}:=V/V^{\perp}. \label{Vperp}
\end{equation}
Since the characteristic of $\mathbb{F}$ is different from 2, and $(v,v)=2q(v)$ for every $v \in V$, we deduce that $q$ vanishes identically on $V^{\perp}$, and it thus factors through a natural non-degenerate quadratic form $\overline{q}$ on $\overline{V}$. We denote by $\mathcal{C}$ the Clifford algebra $\mathcal{C}(V,q)$, and we identify $\mathbb{F}$ and $V$ with their images in $\mathcal{C}$. This is an $\mathbb{F}$-algebra that is generated by $V$ and is determined by the condition that the square $v^{2}$ of $v \in V\subseteq\mathcal{C}$ equals $q(v)\in\mathbb{F}$, and as a consequence we get the equality $uv+vu=(u,v)$ for every $u$ and $v$ in $V$.

Recall that $\mathcal{C}$ is graded, i.e., it decomposes as the sum of the even part $\mathcal{C}_{+}$ and the odd part $\mathcal{C}_{-}$, and that it comes equipped with an involution $\alpha\mapsto\alpha'$, which is the identity on $\mathcal{C}_{+}$ and multiplies elements of $\mathcal{C}_{-}$ by $-1$. It also carries an anti-involution, called \emph{transposition} and denoted by $\alpha\mapsto\alpha^{*}$, which leaves $\mathbb{F}$ and $V$ invariant (but inverts the order of multiplications). These two involutions commute, and their composition (in either order) produces the \emph{Clifford involution}, which is denoted by $\alpha\mapsto\overline{\alpha}$. Using the grading involution, and in the spirit of Equation \eqref{parabil} from the paravector case considered below, the pairing formula becomes
\begin{equation}
uv'+vu'=u'v+v'u=-(u,v)\in\mathbb{F}\subseteq\mathcal{C}\mathrm{\ for\ every\ }u\mathrm{\ and\ }v\mathrm{\ in\ }V. \label{pairC}
\end{equation}

\smallskip

We define the \emph{twisted center} of $\mathcal{C}$ (named so after, e.g., \cite{[Mc]}) and the \emph{Clifford group} to be
\begin{equation}
\widetilde{Z}(\mathcal{C}):=\{\alpha\in\mathcal{C}|\alpha v=v\alpha'\ \forall v \in M\}\quad\mathrm{and}\quad\Gamma(M,q):=\{\alpha\in\mathcal{C}^{\times}|\alpha V\alpha'^{-1}=V\} \label{twZClgrp}
\end{equation}
respectively. Note that some authors require only inclusion in the definition of the Clifford group, which over rings might make a difference (see \cite{[Z]}), but since we work over a field, the injectivity of conjugation inside $\mathcal{C}$ implies, via dimension consideration, that inclusion and equality are equivalent there. Theorem 1.12 of \cite{[Z]} determines the former algebra to be the image of $\mathcal{C}(V^{\perp},0)=\bigwedge^{*}V^{\perp}$ inside $\mathcal{C}$, and its group of invertible elements is thus $\mathbb{F}^{\times}\oplus\bigoplus_{r>0}\bigwedge^{r}V^{\perp}$. Note that orthogonal maps on $V$, i.e., maps $\varphi:V \to V$ satisfying $q\circ\varphi=q$ need not be injective, as elements of $V^{\perp}$ can lie in kernels of such maps, and that they take $V^{\perp}$ into itself. We thus define the orthogonal group $\operatorname{O}(V,q)$ to consist of invertible orthogonal maps, and, following \cite{[Z]}, we define the subgroup
\begin{equation}
\operatorname{O}_{V^{\perp}}(V,q):=\big\{\varphi\in\operatorname{O}(V,q)\big|\varphi|_{V^{\perp}}=\operatorname{Id}_{V^{\perp}}\big\}=\{\varphi\in\operatorname{O}(V,q)|\varphi(v)=v\ \forall v \in V^{\perp}\}. \label{orthgrp}
\end{equation}
When $(V,q)$ is non-degenerate, the Cartan--Dieudonn\'{e} Theorem states that $\operatorname{O}_{V^{\perp}}(V,q)=\operatorname{O}(V,q)$ is generated by \emph{reflections}, i.e., maps of the form $r_{v}$ taking $u \in V$ to $u-\frac{(u,v)}{q(v)}v$, for $v \in V$ with $q(v)\neq0$ (for the proof see Corollary 4.3 of \cite{[MH]} or Subsection 43B of \cite{[O]}). Corollary 3.6 of \cite{[Z]} extends this statement (for $\operatorname{O}_{V^{\perp}}(V,q)$) to the degenerate case.

Lemma 2.2 of \cite{[Z]} shows that $\operatorname{O}_{V^{\perp}}(V,q)$ lies in a short exact sequence
\begin{equation}
0\to\big(\operatorname{Hom}_{\mathbb{F}}(\overline{V},V^{\perp}),+\big)\to\operatorname{O}_{V^{\perp}}(V,q)\to\operatorname{O}(\overline{V},\overline{q})\to1 \label{OSES}
\end{equation}
(in fact, Lemma 2.1 of that reference puts $\operatorname{O}(V,q)$ inside a similar exact sequence, with $\operatorname{O}(\overline{V},\overline{q})$ multiplied by $\operatorname{GL}(V^{\perp})$), and Corollary 3.6, Proposition 3.7, and Theorem 3.8 of that reference yield the following result.
\begin{thm}
The Clifford group $\Gamma(M,q)$ from Equation \eqref{twZClgrp} lies in the short exact sequence \[1\to\textstyle{\mathbb{F}^{\times}\oplus\bigoplus_{r>0}\bigwedge^{r}V^{\perp}}\to\Gamma(V,q)\stackrel{\pi}{\to}\operatorname{O}_{V^{\perp}}(V,q)\to1,\] where the group on the left is $\widetilde{Z}(\mathcal{C})^{\times}$. Moreover, if $\alpha$ is in $\Gamma(V,q)$ then so are $\alpha'$, $\alpha^{*}$, and $\overline{\alpha}$, with $\pi(\alpha')=\pi(\alpha)$ and $\pi(\alpha^{*})=\pi(\overline{\alpha})=\pi(\alpha)^{-1}$. \label{GammaO}
\end{thm}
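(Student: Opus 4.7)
The plan is to construct $\pi$ directly, identify its kernel with $\widetilde{Z}(\mathcal{C})^{\times}$, verify surjectivity via the Cartan--Dieudonn\'{e}-type theorem for $\operatorname{O}_{V^{\perp}}(V,q)$, and then read off the behavior of the three involutions. For $\alpha\in\Gamma(V,q)$ I would define $\pi(\alpha)(v):=\alpha v\alpha'^{-1}$; the defining condition in \eqref{twZClgrp} guarantees $\pi(\alpha)(v)\in V$, and $\mathbb{F}$-linearity is clear. The key identity is $\alpha v\alpha'^{-1}=\alpha' v\alpha^{-1}$: applying the grading involution to the relation $\alpha v=w\alpha'$, with $w=\pi(\alpha)(v)\in V$, produces $\alpha' v'=w'\alpha$, and the two minus signs from $V\subseteq\mathcal{C}_{-}$ cancel. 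This immediately shows $\alpha'\in\Gamma(V,q)$ with $\pi(\alpha')=\pi(\alpha)$, and moreover yields orthogonality
\[\bigl(\alpha v\alpha'^{-1}\bigr)^{2}=\bigl(\alpha' v\alpha^{-1}\bigr)\bigl(\alpha v\alpha'^{-1}\bigr)=\alpha' v^{2}\alpha'^{-1}=q(v).\]
To see that $\pi(\alpha)$ fixes $V^{\perp}$ pointwise, I would observe that any $v\in V^{\perp}$ anti-commutes with every $u\in V$ by \eqref{pairC}, and extending via the parity decomposition gives $v\beta=\beta' v$ for every $\beta\in\mathcal{C}$; specializing to $\beta=\alpha'$ gives $\alpha v=v\alpha'$, hence $\pi(\alpha)(v)=v$. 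Thus $\pi:\Gamma(V,q)\to\operatorname{O}_{V^{\perp}}(V,q)$ is a well-defined homomorphism.

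The kernel of $\pi$ consists of invertible $\alpha$ satisfying $\alpha v=v\alpha'$ for all $v\in V$, which is exactly $\widetilde{Z}(\mathcal{C})^{\times}$ by the definition in \eqref{twZClgrp}; the cited Theorem 1.12 of \cite{[Z]} identifies this with $\mathbb{F}^{\times}\oplus\bigoplus_{r>0}\bigwedge^{r}V^{\perp}$. For surjectivity, Corollary 3.6 of \cite{[Z]} writes any element of $\operatorname{O}_{V^{\perp}}(V,q)$ as a product of reflections $r_{v}$ with $q(v)\neq0$. Such a $v$ is itself invertible in $\mathcal{C}$ with $v^{-1}=v/q(v)$, conjugation by $v$ keeps $V$ invariant so $v\in\Gamma(V,q)$, and a direct computation using $uv+vu=(u,v)$ and $v'=-v$ gives $\pi(v)(u)=-vuv^{-1}=u-\tfrac{(u,v)}{q(v)}v=r_{v}(u)$, so every reflection, and hence all of $\operatorname{O}_{V^{\perp}}(V,q)$, lies in the image.

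For the remaining involutions, applying $*$ to $\alpha v=w\alpha'$ and using that $*$ reverses order and commutes with $'$ (so $(\alpha')^{*}=\overline{\alpha}$) yields $v\alpha^{*}=\overline{\alpha}w$, i.e.\ $\overline{\alpha}^{-1}v\alpha^{*}=w=\pi(\alpha)(v)$. This shows $\alpha^{*}$, and therefore also $\overline{\alpha}=(\alpha^{*})'$, lies in $\Gamma(V,q)$, and rearranging (combined with $\pi(\beta')=\pi(\beta)$ already established) gives $\pi(\alpha^{*})=\pi(\overline{\alpha})=\pi(\alpha)^{-1}$; this amounts, as in Proposition 3.7 of \cite{[Z]}, to the statement that the ``norm'' $\alpha\alpha^{*}$ lies in the twisted center $\widetilde{Z}(\mathcal{C})$. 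The only real difficulty throughout is the sign and order bookkeeping introduced by the three involutions; once the master identity $\alpha v\alpha'^{-1}=\alpha' v\alpha^{-1}$ is in hand, the remaining verifications are essentially formal.
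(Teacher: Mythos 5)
Your argument is correct and complete: the paper itself offers no in-text proof of this theorem (it simply cites Corollary 3.6, Proposition 3.7, and Theorem 3.8 of \cite{[Z]}), and your reconstruction uses exactly the two external inputs the paper itself invokes (Theorem 1.12 of \cite{[Z]} for $\widetilde{Z}(\mathcal{C})^{\times}$ and the reflection-generation of $\operatorname{O}_{V^{\perp}}(V,q)$), with all the sign and order bookkeeping for the involutions done correctly. The only cosmetic remark is that in your final parenthetical the paper's norm is $N(\alpha)=\alpha\overline{\alpha}$ rather than $\alpha\alpha^{*}$, though both indeed land in $\ker\pi=\widetilde{Z}(\mathcal{C})^{\times}$ by the identities you established.
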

We shall also be needing the graded subgroup of the Clifford group. First, the fact that $\operatorname{O}_{V^{\perp}}(V,q)$ lies in the short exact sequence from Equation \eqref{OSES}, in which it surjects onto a classical, reductive, non-degenerate orthogonal group with a unipotent kernel, shows that its elements have determinant $\pm1$. We denote, as usual, the subgroup defined by the determinant 1 condition by $\operatorname{SO}_{V^{\perp}}(V,q)$. Moreover, recall that the surjectivity of the map from Theorem \ref{GammaO} is based on generation by reflections, and reflections are the images of vectors from $V$ (by, e.g., Proposition 3.4 of \cite{[Z]}, among earlier results), thus elements of $\mathcal{C}_{-}$. We therefore define $\Gamma_{+}(V,q):=\Gamma(V,q)\cap\mathcal{C}_{+}$, $\Gamma_{-}(V,q):=\Gamma(V,q)\cap\mathcal{C}_{-}$, and $\Gamma_{\pm}(V,q):=\Gamma_{+}(V,q)\cup\Gamma_{-}(V,q)$, for which we obtain the following simple consequence, which resembles Corollary 3.9 of \cite{[Z]}.
\begin{cor}
The subset $\Gamma_{\pm}(V,q)$ is a subgroup of $\Gamma(V,q)$, which also surjects onto $\operatorname{O}_{V^{\perp}}(V,q)$ via $\pi$, with kernel $\mathbb{F}^{\times}\oplus\bigoplus_{s>0}\bigwedge^{2s}V^{\perp}$, which equals $\widetilde{Z}(\mathcal{C})_{+}^{\times}$, the group of units in $\widetilde{Z}(\mathcal{C})_{+}=\widetilde{Z}(\mathcal{C})\cap\mathcal{C}_{+}$. The subset $\Gamma_{+}(V,q)$ is the inverse image of $\operatorname{O}_{V^{\perp}}(V,q)$ under this restricted projection, and is thus a subgroup of index 2 there, and $\Gamma_{-}(V,q)$ is the non-trivial coset of $\Gamma_{+}(V,q)$ inside $\Gamma_{\pm}(V,q)$. \label{grClgrp}
\end{cor}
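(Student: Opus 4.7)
The plan is to verify in sequence the four assertions: that $\Gamma_{\pm}(V,q)$ is a subgroup, that the restriction of $\pi$ maps it onto $\operatorname{O}_{V^{\perp}}(V,q)$ with the stated kernel, and that $\Gamma_{+}(V,q)$ is the preimage of $\operatorname{SO}_{V^{\perp}}(V,q)$ (note: the statement as written must mean $\operatorname{SO}_{V^{\perp}}$, since otherwise the index-$2$ claim is vacuous). Throughout, the workhorses are Theorem \ref{GammaO} and the $\mathbb{Z}/2$-grading on $\mathcal{C}$.

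For the subgroup property, I would use the grading rule $\mathcal{C}_{\varepsilon}\mathcal{C}_{\delta}\subseteq\mathcal{C}_{\varepsilon\delta}$ to see that $\Gamma_{\pm}$ is closed under multiplication, and then handle inverses by the standard argument: if $\alpha$ is homogeneous and invertible, decompose $\alpha^{-1}=\beta_{+}+\beta_{-}$; since $\alpha\beta_{+}$ and $\alpha\beta_{-}$ lie in opposite grades and their sum is $1\in\mathcal{C}_{+}$, the wrong-parity piece must vanish, so $\alpha^{-1}$ is homogeneous. For surjectivity, I would invoke the degenerate Cartan--Dieudonn\'e result (Corollary 3.6 of \cite{[Z]}, already cited in the excerpt): every element of $\operatorname{O}_{V^{\perp}}(V,q)$ is a product of reflections $r_{v}$ with $q(v)\neq0$, and each such $r_{v}$ is $\pi(v)$ for $v\in V\subseteq\mathcal{C}_{-}$, with $v$ invertible because $v^{2}=q(v)\in\mathbb{F}^{\times}$. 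A product of such vectors lies in $\Gamma_{\pm}$, giving the surjection.

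For the kernel, I would intersect the full kernel $\widetilde{Z}(\mathcal{C})^{\times}=\mathbb{F}^{\times}\oplus\bigoplus_{r>0}\bigwedge^{r}V^{\perp}$ from Theorem \ref{GammaO} with $\mathcal{C}_{+}\cup\mathcal{C}_{-}$. Since $\bigwedge^{r}V^{\perp}\subseteq\mathcal{C}_{+}$ or $\mathcal{C}_{-}$ according as $r$ is even or odd, the even-graded part gives exactly $\mathbb{F}^{\times}\oplus\bigoplus_{s>0}\bigwedge^{2s}V^{\perp}=\widetilde{Z}(\mathcal{C})_{+}^{\times}$. The odd-graded part turns out to be empty: a purely odd element of $\widetilde{Z}(\mathcal{C})$ has zero scalar component and therefore, by the explicit description of $\widetilde{Z}(\mathcal{C})^{\times}$ recorded above, cannot be a unit at all. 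This is the key technical point, and I expect it to be the main (small) obstacle to avoid glossing over.

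Finally, for the index-$2$ statement, I would use that each reflection has determinant $-1$, so $\operatorname{SO}_{V^{\perp}}(V,q)$ consists of products of an even number of reflections. Given $\alpha\in\Gamma_{\pm}$ with $\pi(\alpha)=r_{v_{1}}\cdots r_{v_{k}}$, the element $\alpha(v_{1}\cdots v_{k})^{-1}$ lies in the kernel and is therefore even-graded by the previous step; hence $\alpha$ and $v_{1}\cdots v_{k}$ have the same parity, namely that of $k$. This yields $\Gamma_{+}=\pi^{-1}(\operatorname{SO}_{V^{\perp}}(V,q))$, and $\Gamma_{-}$ is its other coset in $\Gamma_{\pm}$, of index $2$ whenever reflections exist.
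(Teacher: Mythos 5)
Your proof is correct and follows essentially the same route the paper intends (the corollary is stated as a consequence of the preceding discussion: surjectivity via reflections being $\pi$-images of odd elements $v\in V$, and the kernel computation showing a graded unit of $\widetilde{Z}(\mathcal{C})$ must be even because a purely odd element has no scalar component). You are also right that the displayed statement should read $\operatorname{SO}_{V^{\perp}}(V,q)$ for the preimage defining $\Gamma_{+}(V,q)$, and your caveat that the index is $2$ only when reflections exist (i.e.\ when $\overline{V}\neq0$) is a legitimate refinement.
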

We recall that in the classical case, of non-degenerate $(V,q)$, the Clifford group from Equation \eqref{twZClgrp} factors as the union of graded pieces, as in Corollary \ref{grClgrp}. This is so, because the twisted center reduces to $\mathbb{F}$ in this case, which is contained in $\mathcal{C}_{+}$, and thus the subgroup from that corollary is the entire Clifford group. This is, of course, not the case in the degenerate case.

\smallskip

We recall from \cite{[Ma]}, \cite{[Mc]}, and others that the space $\mathbb{F} \oplus V$ inside $\mathcal{C}$, called the space of \emph{paravectors}, comes with the quadratic form $q_{\mathbb{F}}$ that takes an element $\xi=a+v\in\mathbb{F} \oplus V$, with $a\in\mathbb{F}$ and $v \in V$, to $q(v)-a^{2}$. Moreover, for any $\xi\in\mathbb{F} \oplus V$ we can express $q_{\mathbb{F}}(\xi)$ as $-\xi\xi'$ (or equivalently $-\xi\overline{\xi}$), and then for another element $\eta=b+u\in\mathbb{F} \oplus V$, we obtain that
\begin{equation}
\xi\eta'+\eta\xi'=\xi'\eta+\eta'\xi=-(\xi,\eta)_{\mathbb{F}},\qquad\mathrm{where}\qquad(\xi,\eta)_{\mathbb{F}}:=(u,v)-2ab \label{parabil}
\end{equation}
is the symmetric bilinear form induced from $q_{\mathbb{F}}$ on $\mathbb{F} \oplus V$. We denote the resulting quadratic space by $V_{\mathbb{F}}$, and, similarly to Equation \eqref{twZClgrp}, we define the \emph{paravector Clifford group} to be
\begin{equation}
\widetilde{\Gamma}(M,q):=\{\alpha\in\mathcal{C}^{\times}|\alpha(\mathbb{F} \oplus V)\alpha'^{-1}=\mathbb{F} \oplus V\}. \label{paragrp}
\end{equation}
Also here one can restrict the requirement to be inclusion only, which is equivalence in our setting, working over a field.

Note that the space $V_{\mathbb{F}}^{\perp}$ as defined in Equation \eqref{Vperp} for $V_{\mathbb{F}}$ is the image of $V^{\perp} \subseteq V$ (this is because the characteristic of $\mathbb{F}$ is not 2), and, being in a short exact sequence as in Equation \eqref{OSES}, the group $\operatorname{O}_{V^{\perp}}(V_{\mathbb{F}},q_{\mathbb{F}})$ has a subgroup $\operatorname{SO}_{V^{\perp}}(V_{\mathbb{F}},q_{\mathbb{F}})$ of index 2 defined by the determinant 1 condition. Note that $\xi\mapsto-\xi'$ is the reflection $r_{1}$ in the element 1 of $V_{\mathbb{F}}$, representing the non-trivial coset of $\operatorname{SO}_{V^{\perp}}(V_{\mathbb{F}},q_{\mathbb{F}})$ inside $\operatorname{O}_{V^{\perp}}(V_{\mathbb{F}},q_{\mathbb{F}})$ (and conjugation by it preserves this normal subgroup). Proposition 3.16 and Theorem 3.17 of \cite{[Z]} then yield, via Corollary 3.6 of that reference again, the following result.
\begin{thm}
The paravector Clifford group $\widetilde{\Gamma}(M,q)$ from Equation \eqref{paragrp} sits in the short exact sequence \[1\to\textstyle{\mathbb{F}^{\times}\oplus\bigoplus_{s>0}\bigwedge^{2s}V^{\perp}}\to\widetilde{\Gamma}(V,q)\stackrel{\widetilde{\pi}}{\to}\operatorname{SO}_{V^{\perp}}(V_{\mathbb{F}},q_{\mathbb{F}})\to1.\] The kernel here is the same one from Corollary \ref{grClgrp}. In addition, for $\alpha\in\widetilde{\Gamma}(V,q)$, also its images $\alpha'$, $\alpha^{*}$, and $\overline{\alpha}$ lie in this group, and their $\widetilde{\pi}$ images are $r_{1}\widetilde{\pi}(\alpha)r_{1}$, $r_{1}\widetilde{\pi}(\alpha)^{-1}r_{1}$ , and $\widetilde{\pi}(\alpha)^{-1}$ respectively. \label{tildeSES}
\end{thm}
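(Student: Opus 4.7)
The plan is to mirror the argument for Theorem \ref{GammaO} in the paravector setting: check that $\widetilde{\pi}$ lands in $\operatorname{O}_{V^{\perp}}(V_{\mathbb{F}},q_{\mathbb{F}})$, exhibit concrete preimages of a generating family of $\operatorname{SO}_{V^{\perp}}$ using anisotropic paravectors, invoke the degenerate Cartan--Dieudonn\'{e} theorem (Corollary 3.6 of \cite{[Z]}) for surjectivity, and then compute the kernel and the behaviour under the three involutions. For well-definedness, let $\alpha\in\widetilde{\Gamma}(V,q)$ and $\eta\in\mathbb{F}\oplus V$. Since the grading involution is an algebra automorphism, $(\alpha\eta\alpha'^{-1})'=\alpha'\eta'\alpha^{-1}$, so the identity $q_{\mathbb{F}}(\eta)=-\eta\eta'$ from Equation \eqref{parabil} yields
\[ q_{\mathbb{F}}\bigl(\widetilde{\pi}(\alpha)\eta\bigr)=-\alpha\eta\alpha'^{-1}\cdot\alpha'\eta'\alpha^{-1}=-\alpha\eta\eta'\alpha^{-1}=q_{\mathbb{F}}(\eta), \]
placing $\widetilde{\pi}(\alpha)$ in $\operatorname{O}(V_{\mathbb{F}},q_{\mathbb{F}})$. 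For $w\in V^{\perp}$, the relations $wv=-vw$ (from $(v,w)=0$) and $w'=-w$ yield $wv=vw'$, and iterating over monomials in $V$ produces $\alpha w=w\alpha'$ for every $\alpha\in\mathcal{C}$; hence $\widetilde{\pi}(\alpha)(w)=w$, so $\widetilde{\pi}(\alpha)\in\operatorname{O}_{V^{\perp}}(V_{\mathbb{F}},q_{\mathbb{F}})$.

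The decisive step is the paravector reflection formula: for an anisotropic paravector $\xi$ (so that $\xi\xi'=-q_{\mathbb{F}}(\xi)\in\mathbb{F}^{\times}$ and $\xi\in\mathcal{C}^{\times}$), a direct expansion of $\xi\eta\xi'^{-1}$ via Equation \eqref{parabil} gives $\widetilde{\pi}(\xi)=r_{\xi}\circ r_{1}$, where $r_{1}(\eta)=-\eta'$ is the reflection in $1$ recorded in the excerpt. Homomorphy then yields
\[ \widetilde{\pi}(\xi\zeta)=r_{\xi}r_{1}r_{\zeta}r_{1}=r_{\xi}r_{-\zeta'}=r_{\xi}r_{\zeta'} \]
for any two anisotropic paravectors, and since $\zeta\mapsto\zeta'$ is a norm-preserving bijection on such paravectors, every product of two reflections of $V_{\mathbb{F}}$ lies in the image. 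By Corollary 3.6 of \cite{[Z]}, combined with the determinant count, such products generate $\operatorname{SO}_{V^{\perp}}(V_{\mathbb{F}},q_{\mathbb{F}})$, so $\widetilde{\pi}$ surjects onto this subgroup; each generator $r_{\xi}r_{1}$ has determinant $1$, so no larger subgroup can appear.

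For the kernel, evaluating $\alpha\eta\alpha'^{-1}=\eta$ at $\eta=1$ forces $\alpha=\alpha'\in\mathcal{C}_{+}$, and then at $\eta=v\in V$ forces $\alpha v=v\alpha$, placing $\alpha$ in $\widetilde{Z}(\mathcal{C})_{+}^{\times}=\mathbb{F}^{\times}\oplus\bigoplus_{s>0}\bigwedge^{2s}V^{\perp}$, which is exactly the kernel of Corollary \ref{grClgrp}; the reverse inclusion is immediate. Closure of $\widetilde{\Gamma}(V,q)$ under the three involutions will follow by applying the respective (anti-)involution to the defining equality $\alpha(\mathbb{F}\oplus V)\alpha'^{-1}=\mathbb{F}\oplus V$. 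The identity $\eta'=-r_{1}(\eta)$ combined with the image of the defining equation under the grading involution produces $\widetilde{\pi}(\alpha')=r_{1}\widetilde{\pi}(\alpha)r_{1}$, while applying the transposition to $\alpha\eta=\widetilde{\pi}(\alpha)(\eta)\alpha'$, using that the transposition fixes paravectors and commutes with the grading involution, gives $\widetilde{\pi}(\alpha)(\eta)=\alpha^{*'-1}\eta\alpha^{*}$ and hence $\widetilde{\pi}(\overline{\alpha})=\widetilde{\pi}(\alpha)^{-1}$; combining the two relations yields $\widetilde{\pi}(\alpha^{*})=r_{1}\widetilde{\pi}(\alpha)^{-1}r_{1}$. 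The main obstacle I anticipate is establishing the reflection formula $\widetilde{\pi}(\xi)=r_{\xi}r_{1}$: unlike in the ordinary Clifford group case, conjugation by a single paravector realises a product of two reflections on $V_{\mathbb{F}}$, so the ever-present factor $r_{1}$ must be tracked carefully throughout the subsequent generation-of-image and involution arguments.
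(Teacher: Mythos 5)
Your route---the reflection identity $\widetilde{\pi}(\xi)=r_{\xi}\circ r_{1}$ for anisotropic paravectors, the degenerate Cartan--Dieudonn\'{e} theorem for surjectivity, and direct computations for the kernel and the three involutions---is the standard one, and is essentially the content of Proposition 3.16 and Theorem 3.17 of \cite{[Z]}, to which the paper simply defers (it gives no proof of this theorem beyond the citation). The orthogonality and $V^{\perp}$-fixing computations, the reflection formula, the kernel identification with $\widetilde{Z}(\mathcal{C})_{+}^{\times}$, and the formulas $\widetilde{\pi}(\alpha')=r_{1}\widetilde{\pi}(\alpha)r_{1}$, $\widetilde{\pi}(\overline{\alpha})=\widetilde{\pi}(\alpha)^{-1}$, $\widetilde{\pi}(\alpha^{*})=r_{1}\widetilde{\pi}(\alpha)^{-1}r_{1}$ are all correct as you present them.

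There is, however, one genuine gap, and it sits at precisely the point where this statement differs from Theorem \ref{GammaO}: you never show that the image of $\widetilde{\pi}$ is \emph{contained in} $\operatorname{SO}_{V^{\perp}}(V_{\mathbb{F}},q_{\mathbb{F}})$. The justification ``each generator $r_{\xi}r_{1}$ has determinant $1$, so no larger subgroup can appear'' only controls the subgroup of $\widetilde{\Gamma}(V,q)$ generated by anisotropic paravectors; a general $\alpha\in\widetilde{\Gamma}(V,q)$ is defined by the conjugation condition alone and is not a priori such a product, so a priori $\det\widetilde{\pi}(\alpha)$ could equal $-1$ and the image could be all of $\operatorname{O}_{V^{\perp}}(V_{\mathbb{F}},q_{\mathbb{F}})$. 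To close this: if $\det\widetilde{\pi}(\alpha)=-1$, then $r_{1}\widetilde{\pi}(\alpha)\in\operatorname{SO}_{V^{\perp}}(V_{\mathbb{F}},q_{\mathbb{F}})$ equals $\widetilde{\pi}(\beta)$ for some product $\beta$ of anisotropic paravectors by your surjectivity argument, so $\gamma:=\beta\alpha^{-1}$ satisfies $\gamma\eta\gamma'^{-1}=-\eta'$ for every paravector $\eta$. Taking $\eta=1$ forces $\gamma=-\gamma'\in\mathcal{C}_{-}$, and taking $\eta=v\in V$ then forces $\gamma v=v\gamma'$, so $\gamma$ is an invertible element of $\widetilde{Z}(\mathcal{C})\cap\mathcal{C}_{-}=\bigoplus_{s\geq0}\bigwedge^{2s+1}V^{\perp}$; but such elements have vanishing scalar component and are nilpotent, hence not units (by the description of $\widetilde{Z}(\mathcal{C})^{\times}$ quoted from Theorem 1.12 of \cite{[Z]} in Section \ref{VahGrps}), a contradiction. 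Without an argument of this kind the quotient in the exact sequence would have to read $\operatorname{O}_{V^{\perp}}$ rather than $\operatorname{SO}_{V^{\perp}}$, so this is the decisive step and not a formality.
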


\smallskip

We shall need particular subgroups of both the usual and the paravector Clifford group.
\begin{lem}
The norm map, defined by $N(\alpha):=\alpha\overline{\alpha}$, takes $\Gamma(V,q)$ into $\widetilde{Z}(\mathcal{C})^{\times}$ and $\widetilde{\Gamma}(M,q)$ into $\widetilde{Z}(\mathcal{C})_{+}^{\times}$. The subsets $\Gamma^{\mathbb{F}^{\times}}(V,q):=\{\alpha\in\Gamma(V,q)|N(\alpha)\in\mathbb{F}^{\times}\}$ and $\widetilde{\Gamma}^{\mathbb{F}^{\times}}(V,q):=\{\alpha\in\widetilde{\Gamma}(V,q)|N(\alpha)\in\mathbb{F}^{\times}\}$ are subgroups of $\Gamma(V,q)$ and $\widetilde{\Gamma}(M,q)$ respectively, that surject onto $\operatorname{O}_{V^{\perp}}(V,q)$ and $\operatorname{SO}_{V^{\perp}}(V_{\mathbb{F}},q_{\mathbb{F}})$. \label{NinFx}
\end{lem}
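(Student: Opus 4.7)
The plan is to deduce the three assertions---containment, subgroup property, and surjection---from the structure theorems \ref{GammaO} and \ref{tildeSES} already in hand, combined with the standard identity $N(\alpha\beta)=\alpha\beta\overline{\beta}\overline{\alpha}=\alpha N(\beta)\overline{\alpha}$, which holds because Clifford involution is an anti-involution.

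For containment, Theorem \ref{GammaO} gives $\overline{\alpha}\in\Gamma(V,q)$ with $\pi(\overline{\alpha})=\pi(\alpha)^{-1}$ whenever $\alpha\in\Gamma(V,q)$, so $\pi(N(\alpha))=\pi(\alpha)\pi(\overline{\alpha})=\operatorname{Id}$, placing $N(\alpha)$ in $\ker\pi=\widetilde{Z}(\mathcal{C})^{\times}$. An identical argument based on Theorem \ref{tildeSES} lands $N(\widetilde{\Gamma}(V,q))$ inside $\ker\widetilde{\pi}$, which by the parenthetical remark there is $\widetilde{Z}(\mathcal{C})_{+}^{\times}$.

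For the subgroup property, when $N(\beta)\in\mathbb{F}^{\times}$ it is central in $\mathcal{C}$, so the identity above collapses to $N(\alpha\beta)=N(\alpha)N(\beta)\in\mathbb{F}^{\times}$, giving closure under multiplication. For inversion, centrality likewise yields $\overline{\alpha}\alpha=\alpha^{-1}(\alpha\overline{\alpha})\alpha=N(\alpha)$, so that $\alpha^{-1}=\overline{\alpha}/N(\alpha)$ and $N(\alpha^{-1})=\alpha^{-1}\overline{\alpha}^{-1}=N(\alpha)^{-2}\overline{\alpha}\alpha=N(\alpha)^{-1}\in\mathbb{F}^{\times}$.

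Finally, surjectivity will be verified on generators. For $\operatorname{O}_{V^{\perp}}(V,q)$, the degenerate Cartan--Dieudonn\'{e} theorem (Corollary 3.6 of \cite{[Z]}) yields generation by reflections $r_{v}$ with $q(v)\neq0$; each such $v$ lies in $\Gamma_{-}(V,q)$ with $\pi(v)=r_{v}$ and $N(v)=-v^{2}=-q(v)\in\mathbb{F}^{\times}$, giving the needed lift. For the paravector version, the same Cartan--Dieudonn\'{e} applied to $(V_{\mathbb{F}},q_{\mathbb{F}})$ generates $\operatorname{SO}_{V^{\perp}}(V_{\mathbb{F}},q_{\mathbb{F}})$ by even products of reflections $r_{\xi}$ in paravectors $\xi$ with $q_{\mathbb{F}}(\xi)\neq0$. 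The main technical step, which I expect to be the obstacle, is checking that each such paravector $\xi$ actually lies in $\widetilde{\Gamma}(V,q)$---this uses the invertibility $\xi\xi'=-q_{\mathbb{F}}(\xi)\in\mathbb{F}^{\times}$ together with the pairing identity from Equation \eqref{parabil}---and has $\widetilde{\pi}$-image equal to $r_{\xi}\circ r_{1}$, where $r_{1}$ is the reflection $\eta\mapsto-\eta'$ already identified in the text. Granting this, $N(\xi)=\xi\overline{\xi}=-q_{\mathbb{F}}(\xi)\in\mathbb{F}^{\times}$, and $\widetilde{\pi}(\xi_{1}\xi_{2})=r_{\xi_{1}}\circ r_{1}\circ r_{\xi_{2}}\circ r_{1}=r_{\xi_{1}}\circ r_{r_{1}(\xi_{2})}$ runs over all two-reflection generators of $\operatorname{SO}_{V^{\perp}}(V_{\mathbb{F}},q_{\mathbb{F}})$ as $\xi_{1},\xi_{2}$ vary, completing the proof.
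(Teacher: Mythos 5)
Your proposal is correct and follows essentially the same route as the paper: containment of $N(\alpha)$ in the kernel via $\pi(\overline{\alpha})=\pi(\alpha)^{-1}$, closure under products and inverses via the multiplicativity of the norm once it is a central scalar, and surjectivity checked on reflection generators with $N(v)=-q(v)$ and $N(\xi)=-q_{\mathbb{F}}(\xi)$. The one step you flag as a potential obstacle---that a paravector $\xi$ with $q_{\mathbb{F}}(\xi)\neq0$ lies in $\widetilde{\Gamma}(V,q)$ with $\widetilde{\pi}(\xi)=r_{\xi} \circ r_{1}$---is exactly what the paper delegates to Lemma 3.12 of \cite{[Z]}, so there is no substantive divergence.
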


\begin{proof}
We have seen that for $\alpha$ in $\Gamma(V,q)$ (resp. $\widetilde{Z}(\mathcal{C})^{\times}$), the element $\overline{\alpha}$ also lies in that group, and its image under $\pi$ (resp. $\widetilde{\pi}$) is the inverse of that of $\alpha$. Therefore $N(\alpha)$ lies in the kernel of this map, which is then determined in Theorem \ref{GammaO} (resp. Theorem \ref{tildeSES}), proving the first assertion. Now, the centrality of $\mathbb{F}$ inside $\mathcal{C}$ and the Clifford involution inverting multiplication orders imply that if $\beta\in\mathcal{C}$ satisfies $N(\beta)\in\mathbb{F}$ then for any other element $\alpha\in\mathcal{C}$ we have $N(\alpha\beta)=\alpha\beta\overline{\beta}\overline{\alpha}=\alpha N(\beta)\overline{\alpha}=N(\alpha)N(\beta)$. Since $N(\beta^{-1})=\beta^{-1}\overline{\beta}^{-1}$ is a conjugate of $N(\beta)^{-1}=\overline{\beta}^{-1}\beta^{-1}$, they are equal in case $N(\beta)\in\mathbb{F}^{\times}$, which together with the product formula implies that $\Gamma^{\mathbb{F}^{\times}}(V,q)$ and $\widetilde{\Gamma}^{\mathbb{F}^{\times}}(V,q)$ are closed under the group operations as desired. Finally, Remark 2.9 and Corollary 3.6 of \cite{[Z]} show (among earlier references) that $\operatorname{O}_{V^{\perp}}(V,q)$ is generated by reflections $r_{v}$ for $v \in V$ with $q(v)\neq0$, and combining it with Lemma 3.12 of that reference, we deduce that $\operatorname{SO}_{V^{\perp}}(V_{\mathbb{F}},q_{\mathbb{F}})$ is generated by compositions $r_{\xi} \circ r_{1}$ for $\xi \in V_{\mathbb{F}}$ with $q_{\mathbb{F}}(\xi)\neq0$. Since Proposition 3.4 of \cite{[Z]} shows that the former map is the $\pi$-images of $v\in\Gamma(V,q)$, and $N(v)=-q(v)\in\mathbb{F}^{\times}$, and the latter one is obtained via $\widetilde{\pi}$ from $\xi\in\widetilde{\Gamma}(M,q)$, for which $N(\xi)=-q_{\mathbb{F}}(\xi)\in\mathbb{F}^{\times}$, we deduce that the generators of the groups in question are indeed in the image of our subgroups. This proves the lemma.
\end{proof}
In fact, the kernel $\widetilde{Z}(\mathcal{C})_{+}^{\times}$ of $\widetilde{\pi}$ in Theorem \ref{tildeSES} coincides with $Z(\mathcal{C})_{+}^{\times}$, and is thus central. Thus the proof of Lemma \ref{NinFx} shows that for $\widetilde{\Gamma}(M,q)$, the norm map is a homomorphism into $\widetilde{Z}(\mathcal{C})_{+}^{\times}$, and thus $\widetilde{\Gamma}^{\mathbb{F}^{\times}}(V,q)$ is a subgroup since it is the inverse image of a subgroup under a group homomorphism. But since this argument fails for $\Gamma(M,q)$ (since $\widetilde{Z}(\mathcal{C})^{\times}$ is no longer central in general), a different proof is required in this case, and we gave a unified proof.

The proofs of Lemma \ref{NinFx} and Corollary \ref{grClgrp} combine to produce the following corollary.
\begin{cor}
The two intersections $\Gamma^{\mathbb{F}^{\times}}_{\pm}(V,q)=\Gamma^{\mathbb{F}^{\times}}(V,q)\cap\Gamma_{\pm}(V,q)$ and $\Gamma^{\mathbb{F}^{\times}}_{+}(V,q)=\Gamma^{\mathbb{F}^{\times}}(V,q)\cap\Gamma_{+}(V,q)$ are subgroups of $\Gamma(V,q)$, the restriction of $\pi$ to which is surjective onto $\operatorname{O}_{V^{\perp}}(V,q)$ and $\operatorname{SO}_{V^{\perp}}(V,q)$ respectively. The intersection $\Gamma^{\mathbb{F}^{\times}}_{-}(V,q)=\Gamma^{\mathbb{F}^{\times}}(V,q)\cap\Gamma_{-}(V,q)$ is the non-trivial coset of the latter group inside the former. \label{Normgrade}
\end{cor}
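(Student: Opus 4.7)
The plan is to assemble Corollary \ref{grClgrp} and Lemma \ref{NinFx}. Since $\Gamma^{\mathbb{F}^{\times}}(V,q)$ is a subgroup of $\Gamma(V,q)$ by Lemma \ref{NinFx}, and both $\Gamma_{\pm}(V,q)$ and $\Gamma_{+}(V,q)$ are subgroups by Corollary \ref{grClgrp}, the intersections $\Gamma^{\mathbb{F}^{\times}}_{\pm}(V,q)$ and $\Gamma^{\mathbb{F}^{\times}}_{+}(V,q)$ are automatically subgroups of $\Gamma(V,q)$, giving the first assertion for free.

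For the surjectivity statements, I would revisit the reflection generators used inside the proof of Lemma \ref{NinFx}. A vector $v \in V$ with $q(v)\neq0$ lies in $V\subseteq\mathcal{C}_{-}$ and in $\Gamma(V,q)$, with $\pi(v)=r_{v}$, and a direct computation using $v'=-v$ together with $v^{*}=v$ (so $\overline{v}=-v$) yields $N(v)=v\overline{v}=-q(v)\in\mathbb{F}^{\times}$. Hence each such $v$ lies in $\Gamma^{\mathbb{F}^{\times}}_{-}(V,q)\subseteq\Gamma^{\mathbb{F}^{\times}}_{\pm}(V,q)$; since these reflections generate $\operatorname{O}_{V^{\perp}}(V,q)$, the restriction of $\pi$ to $\Gamma^{\mathbb{F}^{\times}}_{\pm}(V,q)$ is surjective. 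For the even side, any pair product $v_{1}v_{2}$ of such reflection generators lies in $\Gamma_{+}(V,q)$ and satisfies $N(v_{1}v_{2})=N(v_{1})N(v_{2})\in\mathbb{F}^{\times}$ by the multiplicativity of $N$ on scalar-normed elements shown in the proof of Lemma \ref{NinFx}; since the compositions $r_{v_{1}}\circ r_{v_{2}}$ generate $\operatorname{SO}_{V^{\perp}}(V,q)$, the restriction of $\pi$ to $\Gamma^{\mathbb{F}^{\times}}_{+}(V,q)$ surjects onto it.

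For the coset assertion, I would note that $\Gamma^{\mathbb{F}^{\times}}_{\pm}(V,q)$ is by construction the disjoint union of $\Gamma^{\mathbb{F}^{\times}}_{+}(V,q)$ and $\Gamma^{\mathbb{F}^{\times}}_{-}(V,q)$. Whenever $V$ admits an anisotropic vector, the latter piece is non-empty by the computation above, and since $\Gamma_{+}(V,q)$ has index at most $2$ in $\Gamma_{\pm}(V,q)$ by Corollary \ref{grClgrp}, so does $\Gamma^{\mathbb{F}^{\times}}_{+}(V,q)$ in $\Gamma^{\mathbb{F}^{\times}}_{\pm}(V,q)$; the non-empty $\Gamma^{\mathbb{F}^{\times}}_{-}(V,q)$ is therefore the unique non-trivial coset. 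In the remaining case where $q$ vanishes identically on $V$ one has $\operatorname{O}_{V^{\perp}}(V,q)=\operatorname{SO}_{V^{\perp}}(V,q)=\{\operatorname{Id}\}$ and nothing further is required.

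I do not expect a genuine obstacle: the statement is a bookkeeping synthesis of the two preceding results, and the only observation worth isolating is the tiny identity $N(v)=-q(v)$ on vectors, which is what makes reflection generators tautologically satisfy the scalar-norm condition and drives all three parts of the corollary simultaneously.
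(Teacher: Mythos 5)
Your proposal is correct and follows exactly the route the paper intends: the paper gives no separate proof of Corollary \ref{Normgrade}, stating only that the proofs of Lemma \ref{NinFx} and Corollary \ref{grClgrp} combine to yield it, and your argument is precisely that combination (intersection of subgroups, reflection generators $v$ with $N(v)=-q(v)\in\mathbb{F}^{\times}$ lying in $\Gamma^{\mathbb{F}^{\times}}_{-}(V,q)$, pair products for the $\operatorname{SO}$ case, and the coset bookkeeping). Your explicit handling of the degenerate case $q\equiv 0$ is a small extra care the paper leaves implicit.
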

Note that elements mapping to reflections in the paravector Clifford group $\widetilde{\Gamma}(M,q)$ from Equation \eqref{paragrp} are no longer graded in general, so that subgroups of $\widetilde{\Gamma}(M,q)$ of the form considered in Corollaries \ref{grClgrp} and \ref{Normgrade} are less natural. In fact, Proposition 3.19 of \cite{[Z]} shows that the graded elements of $\widetilde{\Gamma}(M,q)$ are precisely those that are also in $\Gamma(V,q)$, and are thus in $\Gamma_{\pm}(V,q)$ (such elements $\alpha$ also satisfy $\alpha\alpha'=\pm1$, the sign being that of the determinant of $\pi(\alpha)$ for $\widetilde{\pi}(\alpha)$ to be in $\operatorname{SO}_{V^{\perp}}(V_{\mathbb{F}},q_{\mathbb{F}})$).

As another consequence of Lemma \ref{NinFx}, combined with Corollary \ref{Normgrade}, we obtain a smaller subgroup of each of $\Gamma(V,q)$ and $\widetilde{\Gamma}(M,q)$.
\begin{cor}
The subsets $\Gamma^{1}(V,q):=\{\alpha\in\Gamma(V,q)|N(\alpha)=1\}$ of $\Gamma(V,q)$ and $\widetilde{\Gamma}^{1}(V,q):=\{\alpha\in\widetilde{\Gamma}(V,q)|N(\alpha)=1\}$ of $\widetilde{\Gamma}(M,q)$ are subgroups, that are normal subgroups of $\Gamma^{\mathbb{F}^{\times}}(V,q)$ and $\widetilde{\Gamma}^{\mathbb{F}^{\times}}(V,q)$ from Lemma \ref{NinFx} respectively. The same applies to $\Gamma^{1}_{\pm}(V,q):=\Gamma^{1}(V,q)\cap\Gamma_{\pm}(V,q)$ inside $\Gamma_{\pm}(V,q)$ and to its index 2 subgroup $\Gamma^{1}_{+}(V,q):=\Gamma^{1}(V,q)\cap\Gamma_{+}(V,q)$ in $\Gamma_{+}(V,q)$, with the non-trivial coset $\Gamma^{1}_{-}(V,q):=\Gamma^{1}(V,q)\cap\Gamma_{+}(V,q)$. \label{kerN}
\end{cor}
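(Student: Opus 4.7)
The plan is to realize $\Gamma^{1}(V,q)$ and $\widetilde{\Gamma}^{1}(V,q)$ as kernels of group homomorphisms and then deduce the graded refinements by intersecting with the subgroups from Corollary \ref{grClgrp}.

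The key input will be the multiplicativity relation $N(\alpha\beta)=N(\alpha)N(\beta)$ established in the proof of Lemma \ref{NinFx} under the hypothesis that $N(\beta)$ lies in $\mathbb{F}$. On $\Gamma^{\mathbb{F}^{\times}}(V,q)$ this hypothesis is automatic for every element, so the restriction of $N$ there becomes a genuine group homomorphism into $\mathbb{F}^{\times}$; its kernel is exactly $\Gamma^{1}(V,q)$, which is therefore a normal subgroup of $\Gamma^{\mathbb{F}^{\times}}(V,q)$. The same argument, verbatim, yields the corresponding assertion for $\widetilde{\Gamma}^{1}(V,q)\subseteq\widetilde{\Gamma}^{\mathbb{F}^{\times}}(V,q)$.

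For the graded statements I would intersect with $\Gamma_{\pm}(V,q)$ and $\Gamma_{+}(V,q)$, both of which are subgroups of $\Gamma(V,q)$ by Corollary \ref{grClgrp}, so that $\Gamma^{1}_{\pm}(V,q)$ and $\Gamma^{1}_{+}(V,q)$ are also subgroups. Normality inside the corresponding scalar-norm groups $\Gamma^{\mathbb{F}^{\times}}_{\pm}(V,q)$ and $\Gamma^{\mathbb{F}^{\times}}_{+}(V,q)$ from Corollary \ref{Normgrade} then follows from the general fact that the intersection of a normal subgroup with a subgroup is normal in the latter. The index-2 description, together with the coset identification of $\Gamma^{1}_{-}(V,q)$ inside $\Gamma^{1}_{\pm}(V,q)$, is inherited from the analogous index-2 containment $\Gamma_{+}(V,q)\subseteq\Gamma_{\pm}(V,q)$ by observing that $\mathcal{C}_{+}$ and $\mathcal{C}_{-}$ partition $\Gamma_{\pm}(V,q)$ and hence its subgroup $\Gamma^{1}_{\pm}(V,q)$.

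The one delicate point is the multiplicativity of $N$ itself. As the remarks following Lemma \ref{NinFx} emphasize, $N$ fails to be multiplicative on all of $\Gamma(V,q)$ because $\widetilde{Z}(\mathcal{C})^{\times}$ need not be central there, and this is precisely why one has to restrict to the scalar-norm subgroup before treating $N$ as a homomorphism. Once that restriction is in place, the identity from the proof of Lemma \ref{NinFx} upgrades to a bona fide group homomorphism, and the remainder of the corollary is routine subgroup and normal-subgroup bookkeeping.
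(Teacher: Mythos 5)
Your argument is correct and is essentially the paper's own proof: the paper likewise invokes the multiplicativity established in the proof of Lemma \ref{NinFx} to view the norm as a homomorphism from $\Gamma^{\mathbb{F}^{\times}}(V,q)$, $\widetilde{\Gamma}^{\mathbb{F}^{\times}}(V,q)$, $\Gamma^{\mathbb{F}^{\times}}_{\pm}(V,q)$, and $\Gamma^{\mathbb{F}^{\times}}_{+}(V,q)$ into $\mathbb{F}^{\times}$, with the groups carrying the superscript $1$ as the respective kernels. Your extra bookkeeping for the graded pieces (intersecting a normal subgroup with a subgroup) is a harmless reformulation of the same idea.
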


\begin{proof}
The proof of Lemma \ref{NinFx} shows that the norm becomes a homomorphism from $\Gamma^{\mathbb{F}^{\times}}(V,q)$, $\widetilde{\Gamma}^{\mathbb{F}^{\times}}(V,q)$, $\Gamma^{\mathbb{F}^{\times}}_{\pm}(V,q)$, or $\Gamma^{\mathbb{F}^{\times}}_{+}(V,q)$ into $\mathbb{F}^{\times}$ respectively, of which $\Gamma^{1}(V,q)$, $\widetilde{\Gamma}^{1}(V,q)$, $\Gamma^{1}_{\pm}(V,q)$ or $\Gamma^{1}_{+}(V,q)$ is the respective kernel. This proves the corollary.
\end{proof}
The group $\Gamma^{1}(V,q)$, and sometimes $\Gamma^{1}_{\pm}(V,q)$, is also called the \emph{pin group} of $(V,q)$ in the literature, while $\Gamma^{1}_{+}(V,q)$ is the \emph{Spin group} associated with that space.

\begin{rmk}
Unlike Lemma \ref{NinFx}, the restriction of $\pi$ to $\Gamma^{1}(V,q)$ or to $\Gamma^{1}_{\pm}(V,q)$, and of $\widetilde{\pi}$ to $\widetilde{\Gamma}^{1}(V,q)$ no longer surjects onto $\operatorname{O}_{V^{\perp}}(V,q)$ or $\operatorname{SO}_{V^{\perp}}(V_{\mathbb{F}},q_{\mathbb{F}})$, and the same for the map from $\Gamma^{1}_{+}(V,q)$ to $\operatorname{SO}_{V^{\perp}}(V,q)$. In fact, noting that dividing $\widetilde{Z}(C)$ or $\widetilde{Z}(C)_{+}$ by its nilpotent radical yields $\mathbb{F}$, we find that the images of norms from $\widetilde{Z}(C)^{\times}$ or from $\widetilde{Z}(C)_{+}^{\times}$ modulo nilpotents lies in $\mathbb{F}$. It follows that there are maps from $\operatorname{O}_{V^{\perp}}(V,q)$ and from $\operatorname{SO}_{V^{\perp}}(V_{\mathbb{F}},q_{\mathbb{F}})$ into $\mathbb{F}^{\times}/(\mathbb{F}^{\times})^{2}$, called \emph{spinor norm} maps, and the images of the restrictions from Corollary \ref{kerN} are the kernels $\operatorname{O}_{V^{\perp}}^{1}(V,q)$, $\operatorname{SO}_{V^{\perp}}^{1}(V_{\mathbb{F}},q_{\mathbb{F}})$, and $\operatorname{SO}_{V^{\perp}}^{1}(V,q)$ of these spinor maps (sometimes called the \emph{spinor kernel} in the first case and the \emph{special spinor kernel}, because it is already contained in an $\operatorname{SO}$-group, in the second and third ones). Recall that in the non-degenerate case we have $\widetilde{Z}(C)=\widetilde{Z}(C)_{+}=\mathbb{F}$, so that the groups from Lemma \ref{NinFx} and Corollary \ref{grClgrp} (with the index $\pm$) are the full groups $\Gamma(V,q)$ and $\widetilde{\Gamma}(M,q)$, and that with the index $+$ has index 2 there. Then the kernels of all the projections that we considered is $\mathbb{F}^{\times}$, and just $\{\pm1\}$ for the groups with superscript 1. However, in general the determination of the kernels is more complicated (they do contains $\mathbb{F}^{\times}$ and $\{\pm1\}$ respectively, and project precisely onto these subgroups modulo nilpotents), and will not be required here. \label{spinor}
\end{rmk}
The special spinor kernel form Remark \ref{spinor} is, in many cases (in particular when the dimension of $\overline{V}$ is at least 3), the commutator subgroup of $\operatorname{O}_{V^{\perp}}(V,q)$ (or its special orthogonal subgroup). This is proved in Subsection 43D of \cite{[O]} in the non-degenerate case, and since the kernel from Equation \eqref{OSES} is easily seen to be generated by commutators as well, this statement extends to degenerate quadratic spaces.

\smallskip

Let $(V,q)$ be a quadratic space over $\mathbb{F}$. We now recall three isomorphisms between Clifford algebras of extensions of $(V,q)$ (containing this space as a direct summand) and other types of algebras constructed from $\mathcal{C}(V,q)$. First, following our notation associated with paravectors, we denote by $(V_{\mathbb{F}},q_{\mathbb{F}})$ the quadratic space obtained as the orthogonal direct sum of $(V,q)$ with a 1-dimensional quadratic space of the form $\mathbb{F}\rho$ with $\rho$ having quadratic value $-1$ (i.e., elements of $V_{\mathbb{F}}$ are of the form $\xi=v+a\rho$ for $v \in V$ and $a\in\mathbb{F}$, and the $q_{\mathbb{F}}$-image of such $\xi$ is $q(v)-a^{2}$. Then, using the fact that $\rho$ squares to $-1$ and commutes with elements of $\mathcal{C}_{+}$ but anti-commutes with elements of $\mathcal{C}_{+}$ establishes the following result, mentioned in, e.g., the paragraph following Proposition 5.2 of \cite{[Mc]}.
\begin{lem}
The map taking $\alpha=\alpha_{+}+\alpha_{-}\in\mathcal{C}$, with $\alpha_{\pm}\in\mathcal{C}_{\pm}$, to $\alpha_{+}+\alpha_{-}\rho$ defines an isomorphism from $\mathcal{C}$ onto $\mathcal{C}_{\mathbb{F},+}:=\mathcal{C}(V_{\mathbb{F}},q_{\mathbb{F}})_{+}$, which we denote by $\alpha\mapsto\alpha_{\rho}$. This isomorphism commutes with the two Clifford involutions, namely we have $(\overline{\alpha})_{\rho}=\overline{\alpha_{\rho}}$ for any $\alpha\in\mathcal{C}$. On the level of invertible elements we thus get $\mathcal{C}^{\times}\cong\mathcal{C}_{\mathbb{F},+}^{\times}$. \label{isotoC+}
\end{lem}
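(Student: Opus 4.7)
The plan is to verify directly that the $\mathbb{F}$-linear map $\Phi\colon\mathcal{C}\to\mathcal{C}_{\mathbb{F},+}$ sending $\alpha=\alpha_{+}+\alpha_{-}$ to $\alpha_{+}+\alpha_{-}\rho$ is a unital algebra isomorphism compatible with the Clifford involutions. Everything rests on two elementary facts in $\mathcal{C}(V_{\mathbb{F}},q_{\mathbb{F}})$: $\rho^{2}=q_{\mathbb{F}}(\rho)=-1$, and $\rho v=-v\rho$ for every $v\in V$ (since $(\rho,v)=0$). The latter implies that $\rho$ commutes with every element of $\mathcal{C}_{+}$ and anti-commutes with every element of $\mathcal{C}_{-}$.

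First I would check that $\Phi$ is well-defined: $\alpha_{+}$ already lies in $\mathcal{C}_{\mathbb{F},+}$, while $\alpha_{-}\rho$ is the product of two elements of $\mathcal{C}(V_{\mathbb{F}},q_{\mathbb{F}})_{-}$, hence even. Multiplicativity then follows by expanding
\[
(\alpha_{+}+\alpha_{-}\rho)(\beta_{+}+\beta_{-}\rho)=\alpha_{+}\beta_{+}+\alpha_{+}\beta_{-}\rho+\alpha_{-}\rho\beta_{+}+\alpha_{-}\rho\beta_{-}\rho,
\]
and rewriting the last two summands as $\alpha_{-}\beta_{+}\rho$ and $\alpha_{-}\beta_{-}$ using the commutation rules for $\rho$ together with $\rho^{2}=-1$. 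The resulting expression $(\alpha_{+}\beta_{+}+\alpha_{-}\beta_{-})+(\alpha_{+}\beta_{-}+\alpha_{-}\beta_{+})\rho$ is precisely $\Phi(\alpha\beta)$, since the two parenthesized sums are the even and odd parts of $\alpha\beta$.

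For bijectivity I would use the vector-space decomposition $\mathcal{C}(V_{\mathbb{F}},q_{\mathbb{F}})=\mathcal{C}\oplus\mathcal{C}\rho$ obtained by augmenting a basis of $V$ with $\rho$ and appealing to the standard fact that a Clifford algebra has the same dimension as its exterior algebra. Taking even parity then yields $\mathcal{C}_{\mathbb{F},+}=\mathcal{C}_{+}\oplus\mathcal{C}_{-}\rho$, and with respect to this direct sum $\Phi$ is manifestly a linear isomorphism. Combined with multiplicativity, this gives the algebra isomorphism, and the assertion about units follows at once by restricting to invertible elements.

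Finally, to check $(\overline{\alpha})_{\rho}=\overline{\alpha_{\rho}}$, I would reduce to a single generating product $\alpha=v_{1}\cdots v_{k}$ with $v_{i}\in V$. When $k$ is even, both sides equal $v_{k}\cdots v_{1}$ computed inside $\mathcal{C}_{+}$. When $k$ is odd, the left side equals $-v_{k}\cdots v_{1}\rho$, while the right side unfolds to $\overline{v_{1}\cdots v_{k}\rho}=(-1)^{k+1}\rho v_{k}\cdots v_{1}$ in $\mathcal{C}_{\mathbb{F}}$, and commuting $\rho$ past the $k$ reversed vectors produces the matching $-v_{k}\cdots v_{1}\rho$. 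The only mildly delicate obstacle in the entire argument is precisely this sign-tracking in the compatibility with $\overline{\cdot}$, which is however purely mechanical.
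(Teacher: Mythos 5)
Your proof is correct and takes essentially the same route as the paper, which establishes the lemma from the same two facts ($\rho^{2}=-1$ and the graded commutation of $\rho$ with $\mathcal{C}_{+}$ and $\mathcal{C}_{-}$) and verifies the involution identity by the one-line computation $\overline{\alpha_{\rho}}=\overline{\alpha_{+}}-\rho\,\overline{\alpha_{-}}=\overline{\alpha_{+}}+\overline{\alpha_{-}}\rho=(\overline{\alpha})_{\rho}$. Your generator-by-generator sign check for the involution and the explicit decomposition $\mathcal{C}_{\mathbb{F},+}=\mathcal{C}_{+}\oplus\mathcal{C}_{-}\rho$ are just more detailed versions of the paper's argument, with no gaps.
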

For the last statement in Lemma \ref{isotoC+}, observe that $\overline{\alpha_{\rho}}$ equals $\overline{\alpha_{+}}-\rho\overline{\alpha_{-}}$ (since the Clifford involution interchanges the order of multiplication and takes $\rho$ to $-\rho$), and since $\rho$ anti-commutes with elements of $\mathcal{C}_{+}$, this is $\overline{\alpha_{+}}+\overline{\alpha_{-}}\rho=(\overline{\alpha})_{\rho}$.

Let $(U,h)$ be a \emph{hyperbolic plane} over $\mathbb{F}$, namely $U$ is the 2-dimensional space $\mathbb{F}e\oplus\mathbb{F}f$ with $h(e)=h(f)=0$ and $e$ and $f$ pairing to 1 (so that the value of a general element $ae+bf$ under $h$ is $ab$). We denote the quadratic space obtained as the orthogonal direct sum of $(V,q)$ and $(U,h)$ by $(V_{U},q_{U})$. For this space we cite Lemmas 3.4 and 3.5 of \cite{[Mc]}.
\begin{prop}
The map associating the matrix $\binom{\alpha\ \ \beta}{\gamma\ \ \delta}$, with $\alpha$, $\beta$, $\gamma$, and $\delta$ in $\mathcal{C}$, with $ef\alpha+e\beta'+f\gamma+fe\delta'=\alpha ef+\beta e+\gamma'f+\delta'fe$, is an isomorphism between the algebra $\operatorname{M}_{2}(\mathcal{C})$ of $2\times2$ matrices over $\mathcal{C}$ and $\mathcal{C}_{U}:=\mathcal{C}(V_{U},q_{U})$. The grading, transpose, and Clifford involutions on the latter Clifford algebra are transferred to the operations sending our matrix to $\binom{\ \ \alpha'\ \ -\beta'}{-\gamma'\ \ \ \ \delta'}$, $\binom{\overline{\delta}\ \ \overline{\beta}}{\overline{\gamma}\ \ \overline{\alpha}}$, and $\binom{\ \ \delta^{*}\ \ -\beta^{*}}{-\gamma^{*}\ \ \alpha^{*}}$ respectively. The multiplicative group $\mathcal{C}_{U}^{\times}$ is thus isomorphic to the group $\operatorname{GL}_{2}(\mathcal{C})$ of invertible $2\times2$ matrices over $\mathcal{C}$. \label{isoM2}
\end{prop}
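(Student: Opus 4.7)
My plan is to construct the inverse of the claimed map $\operatorname{M}_{2}(\mathcal{C})\to\mathcal{C}_{U}$ using the universal property of the Clifford algebra. Define the linear map $\psi\colon V_{U}\to\operatorname{M}_{2}(\mathcal{C})$ on generators by $\psi(v)=\bigl(\begin{smallmatrix} v & 0 \\ 0 & v' \end{smallmatrix}\bigr)$ for $v\in V$, $\psi(e)=\bigl(\begin{smallmatrix} 0 & 1 \\ 0 & 0 \end{smallmatrix}\bigr)$, and $\psi(f)=\bigl(\begin{smallmatrix} 0 & 0 \\ 1 & 0 \end{smallmatrix}\bigr)$. A short direct check shows $\psi(x)^{2}=q_{U}(x)$ for $x\in V_{U}$: the self-square $\psi(v)^{2}=q(v)I$ uses $v'^{2}=v^{2}=q(v)$, $\psi(e)^{2}=\psi(f)^{2}=0$ matches $h(e)=h(f)=0$, and the mixed anticommutators give $\psi(v)\psi(e)+\psi(e)\psi(v)=(v+v')E_{12}=0$ and $\psi(e)\psi(f)+\psi(f)\psi(e)=E_{11}+E_{22}=I$, matching $(v,e)=(v,f)=0$ and $(e,f)=1$. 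The universal property then extends $\psi$ to an $\mathbb{F}$-algebra homomorphism $\Psi\colon\mathcal{C}_{U}\to\operatorname{M}_{2}(\mathcal{C})$.

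By induction on the length of a product of elements of $V$, $\Psi(\alpha)=\operatorname{diag}(\alpha,\alpha')$ for every $\alpha\in\mathcal{C}$, and the relations $e^{2}=f^{2}=0$, $ef+fe=1$ give $\Psi(ef)=E_{11}$ and $\Psi(fe)=E_{22}$. A direct matrix computation now yields $\Psi(ef\alpha+e\beta'+f\gamma+fe\delta')=\bigl(\begin{smallmatrix} \alpha & \beta \\ \gamma & \delta \end{smallmatrix}\bigr)$, which identifies $\Psi^{-1}$ with the map described in the proposition. The equivalence of the two sides of the defining equation is an independent, elementary check using that $\alpha e=e\alpha'$ and $\alpha f=f\alpha'$ for every $\alpha\in\mathcal{C}$ (both arising from $ve=-ev$ and $vf=-fv$ for $v\in V$), together with the fact that $ef$ and $fe$ commute with all of $\mathcal{C}$.

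Bijectivity follows from a generation-plus-dimension argument: the image of $\Psi$ contains the matrix units $E_{ij}$ (from $\Psi(ef),\Psi(fe),\Psi(e),\Psi(f)$) and every diagonal $\operatorname{diag}(\alpha,\alpha')$, so the products $E_{ii}\Psi(\alpha)$ yield all $\alpha E_{ii}$ and multiplication by $E_{12},E_{21}$ then produces every $\alpha E_{ij}$, which span $\operatorname{M}_{2}(\mathcal{C})$; injectivity is forced by the dimension equality $\dim_{\mathbb{F}}\mathcal{C}_{U}=2^{\dim V+2}=4\dim_{\mathbb{F}}\mathcal{C}=\dim_{\mathbb{F}}\operatorname{M}_{2}(\mathcal{C})$.

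Finally, each of the three involutions is verified by applying it term-by-term to $ef\alpha+e\beta'+f\gamma+fe\delta'$ and rewriting the result in the same canonical form. For the grading involution, $e'=-e$ and $f'=-f$ yield $ef\alpha'-e\beta-f\gamma'+fe\delta$, which corresponds to $\bigl(\begin{smallmatrix} \alpha' & -\beta' \\ -\gamma' & \delta' \end{smallmatrix}\bigr)$. For transposition, using the antihomomorphism property with $(ef)^{*}=fe$ and $(\beta')^{*}=\overline{\beta}$, the image reads $\alpha^{*}fe+\overline{\beta}e+\gamma^{*}f+\overline{\delta}\,ef$; applying the commutation relations rewrites this as $ef\overline{\delta}+e\beta^{*}+f\overline{\gamma}+fe\alpha^{*}$, corresponding to $\bigl(\begin{smallmatrix} \overline{\delta} & \overline{\beta} \\ \overline{\gamma} & \overline{\alpha} \end{smallmatrix}\bigr)$, and the Clifford involution is then read off as the composition. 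The main obstacle lies in this last step, since translating between the two equivalent forms of the defining expression requires repeated and careful use of the commutation identities and the interaction of the two involutions.
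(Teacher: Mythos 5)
Your proof is correct and follows essentially the same route as the paper, which only sketches the argument (citing Lemmas 3.4 and 3.5 of [Mc]) via the identification of $e$, $f$, $ef$, $fe$ with matrix units and the commutation relations with $\mathcal{C}$; your use of the universal property with $v\mapsto\operatorname{diag}(v,v')$, followed by the surjectivity-plus-dimension argument, is exactly the natural way to make that sketch rigorous, and your term-by-term verification of the three involutions matches the paper's ``straightforward calculation.''
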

The proof of the first assertion is a generalization of the fact that the Clifford algebra of $(U,h)$, which is spanned by $e$, $f$, $ef$, and $fe$ (with $e^{2}=f^{2}=0$ and $ef+fe=1$), is isomorphic to $\operatorname{M}_{2}(\mathbb{F})$ by identifying $ef\leftrightarrow\binom{1\ \ 0}{0\ \ 0}$, $e\leftrightarrow\binom{0\ \ 1}{0\ \ 0}$, $f\leftrightarrow\binom{0\ \ 0}{1\ \ 0}$, and $fe\leftrightarrow\binom{0\ \ 0}{0\ \ 1}$ (see Subsection 2.5 of \cite{[Ba]}), combined with the commutation relations of $e$, $f$, $fe$, and $fe$ and elements of $\mathcal{C}$. The second one is a straightforward calculation. Note that in the notation from \cite{[EGM]}, the vectors $f_{0}$ and $f_{1}$ are $f+e$ and $f-e$ respectively (the latter element is minus the \emph{Weyl element} from Section 4.1 of \cite{[Ba]}), so that $\tau_{0}=f$ and $\tau_{1}=e$, and the products $u$ and $v$ from that reference are $fe$ and $ef$ respectively.

We combine the two constructions, by setting $(V_{U,\mathbb{F}},q_{U,\mathbb{F}})$ to be the direct sum of the three spaces $(V,q)$, $(U,h)$ and $\mathbb{F}\rho$ from above. Then $f_{2}$ from \cite{[EGM]} is $\rho$, so that $w_{0}$ and $w_{1}$ there are $f\rho$ and $e\rho$ respectively, and the element $f_{0}f_{1}f_{2}$, which we denote by $\upsilon$, equals $(ef-fe)\rho=\rho(ef-fe)$. We modify the notation from Lemma \ref{isotoC+} by defining $\alpha_{\upsilon}$ to be $\alpha_{+}+\alpha_{-}\upsilon$ for $\alpha$ decomposed as above, and observe the following equalities, all holding inside the Clifford algebra $\mathcal{C}_{U,\mathbb{F}}:=\mathcal{C}(V_{U,\mathbb{F}},q_{U,\mathbb{F}})$.
\begin{lem}
We have $e\upsilon=\upsilon e=\rho e=-e\rho$, $f\upsilon=\upsilon f=-\rho f=f\rho$, and $\rho\upsilon=\upsilon\rho=fe-ef$. Thus for $\alpha\in\mathcal{C}$ we get $\alpha_{\upsilon}e=\alpha_{\rho}e$ and $\alpha_{\upsilon}f=\alpha'_{\rho}f$, and we also have $(\alpha^{*})_{\upsilon}=(\alpha_{\upsilon})^{*}$. The element $\alpha_{\rho}$ commutes with $e$ and $f$, and we have $\rho\alpha_{\rho}=\alpha'_{\rho}\rho$. On the other hand, with the index $\upsilon$ we get $e\alpha_{\upsilon}=\alpha'_{\rho}e$, $f\alpha_{\upsilon}=\alpha_{\rho}f$, and $\rho\alpha_{\upsilon}=\alpha'_{\upsilon}\rho$. Finally, if $\iota:\mathbb{F} \oplus V \to V_{\mathbb{F}}$ is the isomorphism of quadratic spaces taking $\xi=a+v$ to $v-a\rho$ then we have $\iota(\xi)=-\xi_{\rho}\rho$ for every $\xi\in\mathbb{F} \oplus V\subseteq\mathcal{C}$, and this vector $\iota(\xi)$ anti-commutes with $e$ and $f$ and satisfies $\rho\iota(\xi)=\iota(\overline{\xi})\rho$. \label{relsefrho}
\end{lem}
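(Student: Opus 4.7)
The plan is to verify each identity by direct computation, all of which reduce to three structural facts inside $\mathcal{C}_{U,\mathbb{F}}$: (i) since $V$, $U$, and $\mathbb{F}\rho$ are pairwise orthogonal summands, each of $e$, $f$, $\rho$ anti-commutes with every vector in the other two summands; (ii) inside the Clifford algebra of $(U,h)$ the relations $e^{2}=f^{2}=0$ and $ef+fe=1$ yield the telescoping identities $efe=e$ and $fef=f$; (iii) $\rho^{2}=-1$ and $\rho$ commutes with the even products $ef$ and $fe$ (two anti-commutations cancel). From these, every assertion of the lemma falls out mechanically.

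I would begin with the basic identities for $\upsilon=\rho(ef-fe)$. A one-line computation using $efe=e$ gives $e\upsilon=\upsilon e=\rho e=-e\rho$, and the parallel computation with $fef=f$ gives $f\upsilon=\upsilon f=f\rho=-\rho f$; the identity $\rho\upsilon=\upsilon\rho=fe-ef$ is immediate from $\rho^{2}=-1$ together with the commutation of $\rho$ with $ef-fe$. The identities involving $\alpha_{\upsilon}$ and $\alpha_{\rho}$ follow by splitting $\alpha=\alpha_{+}+\alpha_{-}$ and feeding these basic relations in, while applying the sign rule that an even (resp. odd) element of $\mathcal{C}$ commutes (resp. anti-commutes) with each of $e$, $f$, $\rho$. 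For example, $\alpha_{\upsilon}f$ becomes $\alpha_{+}f+\alpha_{-}f\rho$, and rewriting this through $\alpha'_{\rho}=\alpha_{+}-\alpha_{-}\rho$ together with $\rho f=-f\rho$ identifies it with $\alpha'_{\rho}f$; the relations $e\alpha_{\upsilon}=\alpha'_{\rho}e$, $f\alpha_{\upsilon}=\alpha_{\rho}f$, and $\rho\alpha_{\upsilon}=\alpha'_{\upsilon}\rho$ go the same way, and the commutation of $\alpha_{\rho}$ with $e$ and $f$ is the observation that both $\alpha_{+}$ and $\alpha_{-}\rho$ involve an even total number of anti-commutations with each of these vectors. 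The relation $\rho\alpha_{\rho}=\alpha'_{\rho}\rho$ is the familiar statement that conjugation by $\rho$ realizes the grading involution, and drops out of $\rho^{2}=-1$ after one line.

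The transpose identity $(\alpha^{*})_{\upsilon}=(\alpha_{\upsilon})^{*}$ is the step that I expect to be the main obstacle, because it is where two sign conventions interact. The key observations are that $\upsilon^{*}=-\upsilon$ (transposition reverses $ef$ and $fe$ but fixes $\rho$), and that $\upsilon$ is odd in $\mathcal{C}_{U,\mathbb{F}}$, hence by (i) anti-commutes with every odd element of $\mathcal{C}$; expanding $(\alpha_{+}+\alpha_{-}\upsilon)^{*}=\alpha_{+}^{*}+\upsilon^{*}\alpha_{-}^{*}$ and then moving $\upsilon^{*}=-\upsilon$ past the odd element $\alpha_{-}^{*}$ produces two sign flips which cancel, giving $\alpha_{+}^{*}+\alpha_{-}^{*}\upsilon=(\alpha^{*})_{\upsilon}$. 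Finally, the claims about $\iota$ are a short unwinding of definitions: from $\xi_{\rho}=a+v\rho$ and $\rho^{2}=-1$ one gets $-\xi_{\rho}\rho=v-a\rho=\iota(\xi)$; the anti-commutation of $\iota(\xi)$ with $e$ and $f$ is inherited from those of $v$ and $\rho$; and $\rho\iota(\xi)=\iota(\overline{\xi})\rho$ can either be checked directly or be deduced from the already-established $\rho\alpha_{\rho}=\alpha'_{\rho}\rho$ applied to the paravector $\xi$, once one notes that $\overline{\xi}=\xi'$ for paravectors. Throughout, the only real difficulty is careful sign bookkeeping, particularly distinguishing the anti-commutation of $\rho$ with single vectors from its commutation with the even products $ef$ and $fe$.
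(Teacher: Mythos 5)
Your proposal is correct and follows essentially the same route as the paper: direct verification from the anti-commutation of $e$, $f$, $\rho$ with vectors of the orthogonal summands, the identities $efe=e$, $fef=f$, $\rho^{2}=-1$, and the handling of the transposition via $\upsilon^{*}=-\upsilon$ together with the anti-commutation of $\upsilon$ with odd elements of $\mathcal{C}$ (exactly the argument the paper invokes by reference to Lemma \ref{isotoC+}). The only cosmetic difference is in the last identity, where the paper phrases $\rho\iota(\xi)\rho^{-1}$ as $-r_{\rho}(\iota(\xi))$ while you compute directly; both are fine.
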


\begin{proof}
The first two equalities are straightforward calculations, yielding the second ones immediately. The relation with the transpositions is proved like the corresponding result in Lemma \ref{isotoC+}, via anti-commutation and the fact that $\upsilon^{*}=-\upsilon$. Then, $e$ and $f$ commute with $\mathcal{C}_{\mathbb{F},+}$, the relation with $\rho$ is as before, and the next relations follow from $e$, $f$, and $\rho$ commuting with $\mathcal{C}_{+}$ and anti-commuting with $\mathcal{C}_{+}$ and from the first relations here. As for $\iota(\xi)$, if $\xi=a+v$ then $\xi_{\rho}=a\rho+v\rho$, and multiplying by $-\rho$ from the right gives $v-a\rho=\iota(\xi)$ because $\rho^{2}=q_{\mathbb{F}}(\rho)=-1$. As $\iota(\xi)\in\mathcal{C}_{\mathbb{F},-}$ inside $\mathcal{C}_{U,\mathbb{F}}$, the anti-commutation with $e$ and $f$ follows, and the fact that $\rho\in\mathcal{C}_{\mathbb{F},-}$ implies that $\rho\iota(\xi)\rho^{-1}=-r_{\rho}\big(\iota(\xi)\big)$, where $r_{\rho}$ is the reflection in $\rho$. But for our $\xi$ the latter vector is $-a\rho-v$, which is indeed the image of $\xi'=\overline{\xi}=-r_{1}(\xi)$ under $\iota$. This proves the lemma.
\end{proof}
Combining Lemmas \ref{isotoC+} and \ref{relsefrho} with Proposition \ref{isoM2}, we obtain Proposition 2.5 of \cite{[EGM]}, which is generalized by Lemma 5.3 of \cite{[Mc]}.
\begin{prop}
The map sending the matrix $\binom{\alpha\ \ \beta}{\gamma\ \ \delta}$, with entries as in Proposition \ref{isoM2}, to $\alpha_{\rho}ef+\beta_{\rho}e\rho+\gamma'_{\rho}f\rho+\delta'_{\rho}fe=\alpha_{\upsilon}ef+\beta_{\upsilon}e\rho+\gamma_{\upsilon}f\rho+\delta_{\upsilon}fe$, yields an isomorphism from the matrix ring $\operatorname{M}_{2}(\mathcal{C})$ onto the even Clifford algebra $\mathcal{C}_{U,\mathbb{F},+}:=\mathcal{C}(V_{U,\mathbb{F}},q_{U,\mathbb{F}})_{+}$. The involution on $\mathcal{C}(V_{U,\mathbb{F}},q_{U,\mathbb{F}})_{+}$ arising from transposition of Clifford again corresponds to $\binom{\alpha\ \ \beta}{\gamma\ \ \delta}\mapsto\binom{\ \ \delta^{*}\ \ -\beta^{*}}{-\gamma^{*}\ \ \alpha^{*}}$, and we have an isomorphism between $\operatorname{GL}_{2}(\mathcal{C})$ and $\mathcal{C}_{U,\mathbb{F},+}^{\times}$. \label{isoUF}
\end{prop}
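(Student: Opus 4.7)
The plan is to construct the desired isomorphism as the composition of two already established ones: first Proposition \ref{isoM2} gives $\operatorname{M}_{2}(\mathcal{C})\cong\mathcal{C}_{U}$, and then Lemma \ref{isotoC+}, \emph{applied to the space $(V_{U},q_{U})$ in place of $(V,q)$}, yields an isomorphism $\mathcal{C}_{U}\to\mathcal{C}_{U,\mathbb{F},+}$ sending $\xi=\xi_{+}+\xi_{-}\in\mathcal{C}_{U}$ (graded in $\mathcal{C}_{U}$) to $\xi_{+}+\xi_{-}\rho$. Since both are ring isomorphisms, so is the composition, and the last assertion that $\operatorname{GL}_{2}(\mathcal{C})\cong\mathcal{C}_{U,\mathbb{F},+}^{\times}$ then follows immediately by restricting to units.

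The main computational task is to unfold this composition explicitly on a matrix $\binom{\alpha\ \ \beta}{\gamma\ \ \delta}$. I would start from its image $\alpha ef+\beta e+\gamma'f+\delta'fe$ in $\mathcal{C}_{U}$ furnished by Proposition \ref{isoM2}, and split each of the four summands according to the grading of $\mathcal{C}_{U}$, using that $ef$ and $fe$ are even while $e$ and $f$ are odd there, and that the coefficient from $\mathcal{C}$ itself further decomposes into its $\mathcal{C}_{\pm}$ pieces. Under the map $\xi\mapsto\xi_{+}+\xi_{-}\rho$, each odd summand acquires a $\rho$ on the right, and then the commutation identities of Lemma \ref{relsefrho} (that $\rho$ commutes with $ef$ and $fe$ but anti-commutes with $e$ and $f$, together with $\rho\zeta=\zeta'\rho$ for $\zeta\in\mathcal{C}$) let one recombine each pair of terms into the compact expressions $\alpha_{\rho}ef$, $\beta_{\rho}e\rho$, $\gamma'_{\rho}f\rho$, and $\delta'_{\rho}fe$ respectively. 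The equivalent formulation with subscript $\upsilon$ is then immediate from the identities $\alpha_{\upsilon}e=\alpha_{\rho}e$ and $\alpha_{\upsilon}f=\alpha'_{\rho}f$ of Lemma \ref{relsefrho}, which give $\alpha_{\upsilon}ef=\alpha_{\rho}ef$ and $\alpha_{\upsilon}fe=\alpha'_{\rho}fe$ on multiplying by $f$ or $e$ on the right, and similarly for the mixed terms with $\rho$.

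For the statement about transposition, the key observation is that on the even part $\mathcal{C}_{U,\mathbb{F},+}$ the Clifford involution $\overline{\cdot}$ of $\mathcal{C}_{U,\mathbb{F}}$ agrees with the transposition $^{*}$ (since $\alpha'=\alpha$ for $\alpha$ even). By the last assertion of Lemma \ref{isotoC+} the map $\xi\mapsto\xi_{\rho}$ intertwines the Clifford involutions of $\mathcal{C}_{U}$ and $\mathcal{C}_{U,\mathbb{F}}$, so the transposition on $\mathcal{C}_{U,\mathbb{F},+}$ corresponds to the Clifford involution on $\mathcal{C}_{U}$, which Proposition \ref{isoM2} in turn identifies with the matrix operation $\binom{\alpha\ \ \beta}{\gamma\ \ \delta}\mapsto\binom{\ \ \delta^{*}\ \ -\beta^{*}}{-\gamma^{*}\ \ \ \ \alpha^{*}}$ on $\operatorname{M}_{2}(\mathcal{C})$, giving the claim. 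The main bookkeeping difficulty lies in the middle step: tracking the signs produced by the repeated anti-commutations between $\rho$ and the $\mathcal{C}_{-}$- and $U$-components, and recognizing when they conspire to produce a primed versus unprimed subscript-$\rho$ factor in the four compact terms.
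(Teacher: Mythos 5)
Your proposal is correct and follows exactly the route the paper intends: the paper gives no separate proof of Proposition \ref{isoUF} beyond the remark that it is obtained by ``combining Lemmas \ref{isotoC+} and \ref{relsefrho} with Proposition \ref{isoM2},'' which is precisely your composition of the matrix isomorphism onto $\mathcal{C}_{U}$ with the map $\xi\mapsto\xi_{+}+\xi_{-}\rho$ for the space $(V_{U},q_{U})$, simplified via the commutation relations $\rho e\rho=e$, $\rho f\rho=f$, and $\rho\,ef=ef\,\rho$. Your sign bookkeeping and the reduction of the transposition claim to the identity of transposition with the Clifford involution on the even part, intertwined through Lemma \ref{isotoC+}, are both accurate.
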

Restricting the map from Proposition \ref{isoUF} to scalar matrices $\lambda I$, representing elements $\lambda\in\mathcal{C}\subseteq\operatorname{M}_{2}(\mathcal{C})$, we obtain the map $\lambda\mapsto\lambda_{\upsilon}$ from Proposition 2.4 of \cite{[EGM]}, and the relation with transpositions from Lemma \ref{relsefrho} holds also in $\mathcal{C}_{U,\mathbb{F},+}$ (this map is also related to the one appearing in Proposition 13.23 of \cite{[P]}). This, combined with the fact that the image of the matrix under this operation involves transposition of the entries, suggests that this operation should be viewed as transposition on the matrix algebra (as in Proposition 2.5 of \cite{[EGM]}). However, considering our relations with the norm map below, and the fact that the adjoint operation on matrices corresponds to the Clifford involution (which is the main involution on quaternion algebras), our operation on matrices may still better be viewed as a Clifford involution operation.

\smallskip

We can now define the Vahlen groups. For doing so we first define the set
\begin{equation}
\mathcal{T}(V,q):=\{\alpha\in\mathcal{C}|\alpha V\alpha^{*} \subseteq V,\ N(\alpha)\in\mathbb{F}\}. \label{entry}
\end{equation}
Note that $\mathcal{T}(V,q)\cap\mathcal{C}^{\times}$ equals the set of those $\alpha\in\mathcal{T}(V,q)$ with $N(\alpha)\in\mathbb{F}^{\times}$ (or equivalently $N(\alpha)\neq0$), and as $\alpha^{*}=\alpha'^{-1}N(\alpha)$ for such $\alpha$ (since if $N(\alpha)\in\mathbb{F}$ then it equals $N(\alpha)'=N(\alpha')=\alpha'\alpha^{*}$), and $N(\alpha)$ is an invertible scalar, this intersection equals precisely the subgroup $\Gamma^{\mathbb{F}^{\times}}(V,q)$ from Lemma \ref{NinFx}.

The set $\mathcal{T}(V,q)$ from Equation \eqref{entry} is invariant under the grading involution (as applying this involution to the defining equation shows, via the fact that this involution acts as $-\operatorname{Id}$ on $V$). Then Theorems 4.1 and 3.6 of \cite{[Mc]} combine as follows.
\begin{thm}
Assume that the set $\mathcal{T}(V,q)$ from Equation \eqref{entry} is invariant under transposition, or equivalently the Clifford involution, on the Clifford algebra $\mathcal{C}:=\mathcal{C}(V,q)$ from Equation \eqref{twZClgrp}. Then, tor a matrix $\binom{\alpha\ \ \beta}{\gamma\ \ \delta}$ in the matrix ring $\operatorname{M}_{2}(\mathcal{C})$, the following conditions are equivalent.
\begin{enumerate}
\item $\alpha$, $\beta$, $\gamma$, and $\delta$ have norms in $\mathbb{F}$; $\alpha\beta^{*}=\beta\alpha^{*}$; $\gamma\delta^{*}=\delta\gamma^{*}$; $\alpha\delta^{*}-\beta\gamma^{*}\in\mathbb{F}^{\times}$; $\alpha\overline{\gamma}$ and $\beta\overline{\delta}$ are in $V$; And for every $v \in V$, the elements $\alpha v\overline{\beta}+\beta\overline{v}\overline{\alpha}$ and $\gamma v\overline{\delta}+\delta\overline{v}\overline{\gamma}$ are in $\mathbb{F}$ and $\alpha v\overline{\delta}+\beta\overline{v}\overline{\gamma} \in V$.
\item $\alpha$, $\beta$, $\gamma$, and $\delta$ are in $\mathcal{T}(V,q)$; $\alpha\delta^{*}-\beta\gamma^{*}\in\mathbb{F}^{\times}$; And $\alpha\beta^{*}$ and $\delta\gamma^{*}$ are in $V$.
\item $\alpha$, $\beta$, $\gamma$, and $\delta$ are in $\mathcal{T}(V,q)$; $\alpha\delta^{*}-\beta\gamma^{*}\in\mathbb{F}^{\times}$; And $\overline{\alpha}\beta$ and $\overline{\delta}\gamma$ are in $V$.
\item The image of this matrix under the isomorphism from Proposition \ref{isoM2} lies in the group $\Gamma^{\mathbb{F}^{\times}}(V_{U},q_{U})$ from Lemma \ref{NinFx} for the quadratic space $(V_{U},q_{U})$.
\end{enumerate}
When these conditions are satisfied, the value $\alpha\delta^{*}-\beta\gamma^{*}\in\mathbb{F}^{\times}$ equals the norm of the corresponding element of $\mathcal{C}_{U}$. \label{Vahlen}
\end{thm}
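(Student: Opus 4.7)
The cleanest route is to take condition (4) as primary and unpack it through the isomorphism of Proposition \ref{isoM2}; conditions (1)--(3) are simply different packagings of the resulting matrix identities. Under the isomorphism the grading involution is $\binom{\alpha\ \ \beta}{\gamma\ \ \delta}\mapsto\binom{\alpha'\ \ -\beta'}{-\gamma'\ \ \delta'}$, the Clifford involution is $\binom{\alpha\ \ \beta}{\gamma\ \ \delta}\mapsto\binom{\delta^{*}\ \ -\beta^{*}}{-\gamma^{*}\ \ \alpha^{*}}$, and scalars $\lambda\in\mathbb{F}\subseteq\mathcal{C}_{U}$ correspond to scalar matrices $\lambda I$, so the condition $\Xi\in\Gamma^{\mathbb{F}^{\times}}(V_{U},q_{U})$ on the image $\Xi$ of $M=\binom{\alpha\ \ \beta}{\gamma\ \ \delta}$ decouples into the two demands $N(\Xi)\in\mathbb{F}^{\times}$ and $\Xi V_{U}\Xi'^{-1}\subseteq V_{U}$, each of which I would unpack in turn.

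For the norm I would just multiply the matrices of $\Xi$ and $\overline{\Xi}$ directly. The off-diagonal entries of $M\overline{M}$ being $\beta\alpha^{*}-\alpha\beta^{*}$ and $\gamma\delta^{*}-\delta\gamma^{*}$, their vanishing yields $\alpha\beta^{*}=\beta\alpha^{*}$ and $\gamma\delta^{*}=\delta\gamma^{*}$; the top-left entry $\alpha\delta^{*}-\beta\gamma^{*}$ must be a scalar in $\mathbb{F}^{\times}$ (its image under transposition is the bottom-right $\delta\alpha^{*}-\gamma\beta^{*}$, which then agrees with it automatically), and by construction this scalar equals $N(\Xi)$, settling the last assertion of the theorem. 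For the $V_{U}$-preservation I would first identify $V_{U}$ inside the matrix model: since $v\in V$ anti-commutes with each of $e,f$ it commutes with $ef$ and $fe$, while $e,f$ correspond to the elementary matrices $E_{12},E_{21}$, so the image of $V_{U}$ in $\operatorname{M}_{2}(\mathcal{C})$ is $\big\{\binom{v\ \ b}{a\ \ -v}:v\in V,\ a,b\in\mathbb{F}\big\}$. Testing $M\zeta(M')^{-1}$ against this subset for $\zeta$ running over the generators $E_{12}$, $E_{21}$, and $\binom{v\ \ 0}{0\ \ -v}$ with $v\in V$ then produces, respectively: $\alpha\overline{\gamma}\in V$ and $\alpha v\overline{\beta}+\beta\overline{v}\overline{\alpha}\in\mathbb{F}$; $\beta\overline{\delta}\in V$ and $\gamma v\overline{\delta}+\delta\overline{v}\overline{\gamma}\in\mathbb{F}$; and $\alpha v\overline{\delta}+\beta\overline{v}\overline{\gamma}\in V$. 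Combined with the norm conditions above these reproduce precisely the list of (1), establishing (4)$\Leftrightarrow$(1).

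The remaining equivalences (1)$\Leftrightarrow$(2)$\Leftrightarrow$(3) are algebraic repackagings of the same data, using the transposition invariance of $\mathcal{T}(V,q)$ supplied by the hypothesis. Once $\alpha,\beta,\gamma,\delta\in\mathcal{T}(V,q)$, the pairing formula \eqref{pairC} together with the identity $\alpha^{*}=N(\alpha)\alpha'^{-1}$ available in $\Gamma^{\mathbb{F}^{\times}}(V,q)$ lets one convert the clause ``$\alpha v\overline{\beta}+\beta\overline{v}\overline{\alpha}\in\mathbb{F}$ for every $v\in V$'' into either the single condition $\alpha\beta^{*}\in V$ (yielding (2)) or equivalently $\overline{\alpha}\beta\in V$ (yielding (3)), and analogously $\delta\gamma^{*}\in V$, resp.\ $\overline{\delta}\gamma\in V$, for the $(\gamma,\delta)$ block; the pointwise conditions $\alpha\overline{\gamma}\in V$ and $\alpha v\overline{\delta}+\beta\overline{v}\overline{\gamma}\in V$ of (1) are absorbed in the same manner. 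I expect the main obstacle to be the third generator calculation in the previous paragraph: pushing $M\binom{v\ \ 0}{0\ \ -v}(M')^{-1}$ through the grading involution applied to $M$, the inversion formula $M^{-1}=N(\Xi)^{-1}\overline{M}$, and the identity $\overline{v}=-v$ for $v\in V$, and then matching the result against the template $\binom{v'\ \ b'}{a'\ \ -v'}$, requires the most careful bookkeeping of all the steps and is exactly what forces the specific cross-term $\alpha v\overline{\delta}+\beta\overline{v}\overline{\gamma}$ to appear.
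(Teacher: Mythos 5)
First, a point of reference: the paper does not prove Theorem \ref{Vahlen} at all --- it is quoted as the combination of Theorems 4.1 and 3.6 of \cite{[Mc]} --- so there is no internal proof to compare against, and your attempt has to be judged on its own. Your plan for $(1)\Leftrightarrow(4)$ is essentially the right computation, and it is the same one the paper carries out later (for a different purpose) in Lemma \ref{actVU} and Corollary \ref{VUact}: compute $M\overline{M}$ to decode the norm condition, and test the conjugation action on the spanning set $e$, $f$, $v \in V$ of $V_{U}$. But your bookkeeping of which generator produces which condition is scrambled: testing $e$ yields $N(\alpha)\in\mathbb{F}$, $N(\gamma)\in\mathbb{F}$, and $\alpha\overline{\gamma} \in V$ (the coefficients of $e$ and $f$ in $\Xi e\Xi^{*}$ are $N(\alpha)$ and $N(\gamma)$); testing $f$ yields $N(\beta),N(\delta)\in\mathbb{F}$ and $\beta\overline{\delta} \in V$; and all three $v$-dependent clauses come from the third generator. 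The conditions $\alpha v\overline{\beta}+\beta\overline{v}\overline{\alpha}\in\mathbb{F}$ cannot arise from testing $E_{12}$ as you assert, since they involve $v$. More importantly, the clause of Condition 1 that $\alpha$, $\beta$, $\gamma$, $\delta$ individually have norms in $\mathbb{F}$ never appears in your assembled list: your ``norm conditions'' from $M\overline{M}$ give only $\alpha\beta^{*}=\beta\alpha^{*}$, $\gamma\delta^{*}=\delta\gamma^{*}$, and $\alpha\delta^{*}-\beta\gamma^{*}\in\mathbb{F}^{\times}$. This is recoverable (the missing conditions are exactly the $e$- and $f$-coefficients just mentioned), but as written your reconstruction of Condition 1 is incomplete. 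You should also justify why $M\overline{M}=\lambda I$ with $\lambda\in\mathbb{F}^{\times}$ makes $M$ invertible (one-sided inverses are two-sided in a finite-dimensional algebra).

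The genuine gap is in $(1)\Leftrightarrow(2)\Leftrightarrow(3)$, which you dismiss as ``algebraic repackagings.'' The identity you propose to use, $\alpha^{*}=N(\alpha)\alpha'^{-1}$, holds only for invertible entries with $N(\alpha)\in\mathbb{F}^{\times}$, whereas the entries of a Vahlen matrix are typically not invertible: $\gamma=0$ in $\binom{1\ \ v}{0\ \ 1}$, and $\mathcal{T}(V,q)$ from Equation \eqref{entry} explicitly allows $N(\alpha)=0$. So the proposed mechanism is unavailable, and in any case it would be circular, since it presupposes the entries lie in $\Gamma^{\mathbb{F}^{\times}}(V,q)$. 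Deducing from the coupled conditions of (1) --- which constrain only the mixed products $\alpha v\overline{\beta}+\beta\overline{v}\overline{\alpha}$, $\alpha v\overline{\delta}+\beta\overline{v}\overline{\gamma}$, and so on --- that each entry separately satisfies $\alpha V\alpha^{*} \subseteq V$, i.e.\ lies in $\mathcal{T}(V,q)$, is the substantive content of Theorem 3.6 of \cite{[Mc]}; it requires playing the conditions off against the invertibility of $\alpha\delta^{*}-\beta\gamma^{*}$ and is exactly where the hypothesis that $\mathcal{T}(V,q)$ is transposition-invariant enters. As your sketch stands, this half of the theorem is asserted rather than proved.
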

It follows from Condition 4 of Theorem \ref{Vahlen} that the set of matrices satisfying any of the other equivalent conditions there form a multiplicative group (i.e., a subgroup of $\operatorname{GL}_{2}(\mathcal{C})$). This group is called the \emph{Vahlen group} associated with $(V,q)$, and is denoted by $\operatorname{V}(V,q)$. The equivalence of Conditions 1 and 4 in that theorem is independent of the assumption on $\mathcal{T}(V,q)$, and the theorem yields an isomorphism of groups between $\operatorname{V}(V,q)$ and $\Gamma^{\mathbb{F}^{\times}}(V_{U},q_{U})$.

The last assertion in Theorem \ref{Vahlen} produces the following consequence.
\begin{cor}
The map taking a matrix $\binom{\alpha\ \ \beta}{\gamma\ \ \delta}\in\operatorname{V}(V,q)$ to $\alpha\delta^{*}-\beta\gamma^{*}\in\mathbb{F}^{\times}$ is a group homomorphism. Its kernel consists of matrices satisfying a set of equivalent conditions similar to those from Theorem \ref{Vahlen}, but with each instance of $\alpha\delta^{*}-\beta\gamma^{*}\in\mathbb{F}^{\times}$ replaces by $\alpha\delta^{*}-\beta\gamma^{*}=1$, and with the subgroup of $\mathcal{C}_{U}^{\times}$ being the (pin) group $\Gamma^{1}(V_{U},q_{U})$ from Corollary \ref{kerN}. \label{detSV}
\end{cor}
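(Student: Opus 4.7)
The plan is to reduce the whole statement to the norm computation on $\Gamma^{\mathbb{F}^{\times}}(V_{U},q_{U})$ that was already carried out in Lemma \ref{NinFx}, using Theorem \ref{Vahlen} as a dictionary. By Theorem \ref{Vahlen}, the restriction of the isomorphism of Proposition \ref{isoM2} gives a group isomorphism $\operatorname{V}(V,q)\cong\Gamma^{\mathbb{F}^{\times}}(V_{U},q_{U})$, and its last assertion identifies the scalar $\alpha\delta^{*}-\beta\gamma^{*}$ attached to a Vahlen matrix with the norm $N$ of the corresponding element of $\mathcal{C}_{U}$. Thus the map in question is literally the composition of this isomorphism with the restriction of $N$ to $\Gamma^{\mathbb{F}^{\times}}(V_{U},q_{U})$.

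To obtain the first claim, I would then simply invoke the multiplicativity of $N$ already proved inside Lemma \ref{NinFx}: when $N(\eta)\in\mathbb{F}$ the centrality of scalars together with the reversing property of the Clifford involution give $N(\zeta\eta)=\zeta N(\eta)\overline{\zeta}=N(\zeta)N(\eta)$, so the restriction of $N$ to $\Gamma^{\mathbb{F}^{\times}}(V_{U},q_{U})$ is a group homomorphism into $\mathbb{F}^{\times}$. Transporting this back across the Vahlen isomorphism immediately proves that $\binom{\alpha\ \ \beta}{\gamma\ \ \delta}\mapsto\alpha\delta^{*}-\beta\gamma^{*}$ is a group homomorphism from $\operatorname{V}(V,q)$ to $\mathbb{F}^{\times}$.

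Finally, the kernel of $N$ on $\Gamma^{\mathbb{F}^{\times}}(V_{U},q_{U})$ is by definition the (pin) group $\Gamma^{1}(V_{U},q_{U})$ from Corollary \ref{kerN}, which gives the Condition 4 description of the kernel. The parallel Conditions 1--3 are obtained by intersecting each of the equivalent conditions of Theorem \ref{Vahlen} with the additional equation $\alpha\delta^{*}-\beta\gamma^{*}=1$; since this is just a specialization of the clause "$\alpha\delta^{*}-\beta\gamma^{*}\in\mathbb{F}^{\times}$" appearing in each of those conditions, the equivalences from Theorem \ref{Vahlen} descend verbatim. There is no real obstacle to overcome here: the only point worth flagging is making sure that the identifications from the last sentence of Theorem \ref{Vahlen} and from the multiplicativity computation inside Lemma \ref{NinFx} are invoked in the right order, so that the homomorphism property on the Vahlen side is transported from, and not reproved alongside of, the homomorphism property of $N$ on the Clifford side.
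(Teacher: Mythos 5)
Your proposal is correct and matches the paper's intended argument: the paper derives Corollary \ref{detSV} directly from the last assertion of Theorem \ref{Vahlen} (identifying $\alpha\delta^{*}-\beta\gamma^{*}$ with the norm of the corresponding element of $\mathcal{C}_{U}$), with the homomorphism property and the kernel description coming from the multiplicativity of $N$ established in Lemma \ref{NinFx} and the definition of $\Gamma^{1}(V_{U},q_{U})$ in Corollary \ref{kerN}, exactly as you do.
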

The map from Corollary \ref{detSV} is called the \emph{pseudo-determinant} and denoted by $\det$, and its kernel is the \emph{special Vahlen group} $\operatorname{SV}(V,q)$.

\smallskip

The Vahlen group construction has a paravector analogue. Similarly to Equation \eqref{entry}, we define
\begin{equation}
\widetilde{\mathcal{T}}(V,q):=\{\alpha\in\mathcal{C}|\alpha(\mathbb{F} \oplus V)\alpha^{*}\subseteq\mathbb{F} \oplus V,\ N(\alpha)\in\mathbb{F}\}. \label{enpara}
\end{equation}
Also in $\widetilde{\mathcal{T}}(V,q)$, the invertible elements, i.e., those having norms in $\mathbb{F}^{\times}$ (or equivalently non-zero norms) are precisely the elements of the subgroup $\widetilde{\Gamma}^{\mathbb{F}^{\times}}(V,q)$ from Lemma \ref{NinFx}.

As the grading involution is an automorphism of $\mathbb{F} \oplus V$ (namely $-r_{1}$), it leaves the set $\widetilde{\mathcal{T}}(V,q)$ from Equation \eqref{enpara} invariant as well. The combination of Theorems 5.7 and 6.1 of \cite{[Mc]}, the latter generalizing Theorem 3.7 of \cite{[EGM]} from the case called strongly anisotropic in that reference, gives the following result.
\begin{thm}
Let a matrix $\binom{\alpha\ \ \beta}{\gamma\ \ \delta}\in\operatorname{M}_{2}(\mathcal{C})$ be given, for $\mathcal{C}:=\mathcal{C}(V,q)$, and assume that transposition, or equivalently the Clifford involution, preserves the set $\widetilde{\mathcal{T}}(V,q)$ from Equation \eqref{enpara}. Then the following four conditions are equivalent.
\begin{enumerate}
\item $\alpha$, $\beta$, $\gamma$, and $\delta$ have norms in $\mathbb{F}$; $\alpha\beta^{*}=\beta\alpha^{*}$; $\gamma\delta^{*}=\delta\gamma^{*}$; $\alpha\delta^{*}-\beta\gamma^{*}\in\mathbb{F}^{\times}$; $\alpha\overline{\gamma}$ and $\beta\overline{\delta}$ are in $\mathbb{F} \oplus V$; And for every $\xi\in\mathbb{F} \oplus V$, the elements $\alpha\xi\overline{\beta}+\beta\overline{\xi}\overline{\alpha}$ and $\gamma\xi\overline{\delta}+\delta\overline{\xi}\overline{\gamma}$ are in $\mathbb{F}$ and $\alpha\xi\overline{\delta}+\beta\overline{\xi}\overline{\gamma}\in\mathbb{F} \oplus V$.
\item $\alpha$, $\beta$, $\gamma$, and $\delta$ are in $\widetilde{\mathcal{T}}(V,q)$; $\alpha\delta^{*}-\beta\gamma^{*}\in\mathbb{F}^{\times}$; And $\alpha\beta^{*}$ and $\delta\gamma^{*}$ are in $\mathbb{F} \oplus V$.
\item $\alpha$, $\beta$, $\gamma$, and $\delta$ are in $\widetilde{\mathcal{T}}(V,q)$; $\alpha\delta^{*}-\beta\gamma^{*}\in\mathbb{F}^{\times}$; And $\overline{\alpha}\beta$ and $\overline{\delta}\gamma$ are in $\mathbb{F} \oplus V$.
\item The image of this matrix under the isomorphism from Proposition \ref{isoUF} lies in the group $\Gamma^{\mathbb{F}^{\times}}_{+}(V_{U,\mathbb{F}},q_{U,\mathbb{F}})$ associated by Corollary \ref{Normgrade} with the quadratic space $(V_{U,\mathbb{F}},q_{U,\mathbb{F}})$.
\end{enumerate}
Also in this case $\alpha\delta^{*}-\beta\gamma^{*}\in\mathbb{F}^{\times}$ is the norm of the associated element of $\mathcal{C}_{U,\mathbb{F},+}$. \label{pvVah}
\end{thm}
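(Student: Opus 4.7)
The plan is to mirror the structure of the proof of Theorem \ref{Vahlen}, using the isomorphism $\operatorname{M}_2(\mathcal{C}) \cong \mathcal{C}_{U,\mathbb{F},+}$ from Proposition \ref{isoUF} as the main bridge. Writing $X$ for the image of the matrix $\binom{\alpha\ \beta}{\gamma\ \delta}$ under this isomorphism, Condition (4) asserts exactly that $X \in \Gamma^{\mathbb{F}^{\times}}_{+}(V_{U,\mathbb{F}},q_{U,\mathbb{F}})$, i.e., that $X$ is an even invertible element with $N(X) \in \mathbb{F}^{\times}$ whose conjugation preserves $V_{U,\mathbb{F}}$. Since $X$ is even we have $X' = X$, so the conjugation condition reduces to $X V_{U,\mathbb{F}} X^{-1} \subseteq V_{U,\mathbb{F}}$, which by linearity can be checked against the generating set $V \cup \{e, f, \rho\}$ of $V_{U,\mathbb{F}}$.

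To prove (4) $\Leftrightarrow$ (1), I would evaluate this conjugation requirement on each generator, using the commutation rules from Lemma \ref{relsefrho} ($e \alpha_{\upsilon} = \alpha'_{\rho} e$, $f \alpha_{\upsilon} = \alpha_{\rho} f$, $\rho \alpha_{\upsilon} = \alpha'_{\upsilon} \rho$) together with the paravector-to-vector identification $\iota(\xi) = -\xi_{\rho}\rho$. Multiplying out $X w$ and $w X$ in matrix form for $w \in \{e, f, \rho\}$ and for $w = v \in V$, and then comparing the $V$-coordinate and the $\mathbb{F}\rho$-coordinate of each resulting expression, the condition $X w X^{-1} \in V_{U,\mathbb{F}}$ should unpack exactly into the constraints listed in (1): the symmetry relations $\alpha\beta^{*} = \beta\alpha^{*}$ and $\gamma\delta^{*} = \delta\gamma^{*}$ come from conjugating $e$ and $f$; the paravector conditions on $\alpha\overline{\gamma}$ and $\beta\overline{\delta}$ come from conjugating $\rho$ after applying $\iota$; and the mixed conditions $\alpha \xi \overline{\beta} + \beta \overline{\xi} \overline{\alpha} \in \mathbb{F}$ and $\alpha \xi \overline{\delta} + \beta \overline{\xi} \overline{\gamma} \in \mathbb{F} \oplus V$ come from conjugating an arbitrary $v \in V$ after collecting the $V$- and $\mathbb{F}\rho$-parts.

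For (1) $\Leftrightarrow$ (2) $\Leftrightarrow$ (3), the observation is that saying $\alpha, \beta, \gamma, \delta \in \widetilde{\mathcal{T}}(V,q)$ is a repackaging of the ``diagonal'' part of (1): having norms in $\mathbb{F}$ together with $\alpha \xi \overline{\alpha}' \in \mathbb{F} \oplus V$ for every paravector $\xi$, which is the polarized form of $N(\alpha) \in \mathbb{F}$ combined with the defining property of $\widetilde{\mathcal{T}}(V,q)$. The ``mixed'' constraints in (1) then correspond to $\alpha \beta^{*}, \delta \gamma^{*} \in \mathbb{F} \oplus V$ in (2) and to $\overline{\alpha} \beta, \overline{\delta} \gamma \in \mathbb{F} \oplus V$ in (3); these two formulations are interchanged by the Clifford/transpose involution, and the equivalence between them relies precisely on the standing assumption that transposition preserves $\widetilde{\mathcal{T}}(V,q)$ (applied to $\beta$ and $\gamma$).

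For the final norm identity, Proposition \ref{isoUF} gives that the Clifford involution on $\mathcal{C}_{U,\mathbb{F},+}$ corresponds to the matrix operation $\binom{\alpha\ \beta}{\gamma\ \delta} \mapsto \binom{\delta^{*}\ -\beta^{*}}{-\gamma^{*}\ \alpha^{*}}$, so $X\overline{X}$ is represented by the product $\binom{\alpha\ \beta}{\gamma\ \delta}\binom{\delta^{*}\ -\beta^{*}}{-\gamma^{*}\ \alpha^{*}}$, whose $(1,1)$-entry is $\alpha\delta^{*} - \beta\gamma^{*}$; once this matrix is scalar (which follows from the symmetry relations in (1)), its value equals $N(X)$. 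The main obstacle I expect is the bookkeeping: the paravector case is strictly more delicate than the vector case of Theorem \ref{Vahlen} because $V_{U,\mathbb{F}}$ carries the extra generator $\rho$ and the relevant identification of paravectors in $V$ with vectors in $V_{\mathbb{F}}$ passes through both the $\rho$- and $\upsilon$-subscripts of Lemma \ref{relsefrho}. Separating the $V$-component from the $\mathbb{F}\rho$-component in each conjugation identity, cleanly and without sign errors, is where most of the technical labor will sit.
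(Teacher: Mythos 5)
A preliminary remark: the paper does not prove Theorem \ref{pvVah} at all --- it is imported as the combination of Theorems 5.7 and 6.1 of \cite{[Mc]} --- so there is no internal proof to compare against; your outline is, however, the standard route, and its first half is essentially what the paper later carries out in Lemma \ref{actpara} and Corollary \ref{paraact} when it needs the explicit action. Within that outline, the bookkeeping of which condition comes from which generator is off. Conjugating $e$ and $f$ (i.e., computing $\eta e\eta^{*}$ and $\eta f\eta^{*}$ as in Lemma \ref{actpara}) produces the requirements $N(\alpha),N(\gamma)\in\mathbb{F}$ and $\alpha\overline{\gamma}\in\mathbb{F}\oplus V$, respectively $N(\beta),N(\delta)\in\mathbb{F}$ and $\beta\overline{\delta}\in\mathbb{F}\oplus V$ --- not the symmetry relations. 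The relations $\alpha\beta^{*}=\beta\alpha^{*}$ and $\gamma\delta^{*}=\delta\gamma^{*}$ are exactly the vanishing of the off-diagonal entries of the product $\binom{\alpha\ \ \beta}{\gamma\ \ \delta}\binom{\ \ \delta^{*}\ \ -\beta^{*}}{-\gamma^{*}\ \ \ \ \alpha^{*}}$ representing $X\overline{X}$, i.e., they encode the scalar-norm condition $N(X)\in\mathbb{F}^{\times}$ that distinguishes $\Gamma^{\mathbb{F}^{\times}}_{+}$ from the full even Clifford group; you use this observation at the end for the norm identity, so the repair is only to move it into the (4)$\Rightarrow$(1) direction. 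Note also that the twisted conjugation in Equation \eqref{twZClgrp} is $\eta(\cdot)\eta'^{-1}$, which for even $\eta$ agrees with $\eta(\cdot)\eta^{*}$ only up to the scalar $N(\eta)$, so the identification of the two actions itself presupposes the scalar-norm condition.

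The more serious gap is your treatment of (1)$\Leftrightarrow$(2)$\Leftrightarrow$(3) as ``a repackaging of the diagonal part of (1).'' Condition (1) constrains only the coupled expressions $\alpha\xi\overline{\beta}+\beta\overline{\xi}\overline{\alpha}$, $\alpha\xi\overline{\delta}+\beta\overline{\xi}\overline{\gamma}$, and so on; it says nothing directly about $\alpha\xi\alpha^{*}$ for a single entry, and no polarization identity will produce such a statement. Decoupling the entries --- multiplying the conditions of (1) against $\delta^{*}$, $\gamma^{*}$, etc.\ and exploiting the invertibility of $\alpha\delta^{*}-\beta\gamma^{*}$ to conclude that each entry individually lies in $\widetilde{\mathcal{T}}(V,q)$ --- is the substantive content of Theorem 5.7 of \cite{[Mc]}. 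It is precisely the step for which the standing hypothesis that transposition preserves $\widetilde{\mathcal{T}}(V,q)$ is imposed (and for which \cite{[EGM]} originally needed strong anisotropy); the paper explicitly notes that the equivalence of Conditions 1 and 4 alone does not require that hypothesis, which confirms that the passage to Conditions 2 and 3 is where the hypothesis must enter. As written, your sketch would, after the correction above, establish (1)$\Leftrightarrow$(4) and the norm identity, but it leaves the real work of (1)$\Rightarrow$(2),(3) untouched.
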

Once again, the set of matrices satisfying each one of the equivalent conditions from Theorem \ref{pvVah} is a (multiplicative) subgroup of $\operatorname{GL}_{2}(\mathcal{C})$, by Condition 4 there. This is the \emph{paravector Vahlen group} $\widetilde{\operatorname{V}}(V,q)$ that is associated with $(V,q)$. The additional invariance of $\widetilde{\mathcal{T}}(V,q)$ is not required for the equivalence of Conditions 1 and 4 in Theorem \ref{pvVah} as well, and the latter condition shows that $\widetilde{\operatorname{V}}(V,q)$ is isomorphic, as a group, to $\Gamma^{\mathbb{F}^{\times}}_{+}(V_{U,\mathbb{F}},q_{U,\mathbb{F}})$.

We also have the following analogue of Corollary \ref{detSV}.
\begin{cor}
We have a pseudo-determinant map $\det:\widetilde{\operatorname{V}}(V,q)\to\mathbb{F}^{\times}$ taking the matrix from Theorem \ref{pvVah} again to $\alpha\delta^{*}-\beta\gamma^{*}$, which is a group homomorphism. The kernel is characterized by conditions like in that theorem, with $\alpha\delta^{*}-\beta\gamma^{*}=1$ instead of replacing $\alpha\delta^{*}-\beta\gamma^{*}\in\mathbb{F}^{\times}$, and with the (spin) group $\Gamma^{1}_{+}(V_{U,\mathbb{F}},q_{U,\mathbb{F}})$ inside $\mathcal{C}_{U,\mathbb{F},+}^{\times}$. \label{SVpara}
\end{cor}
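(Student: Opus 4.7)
The plan is to imitate the proof of Corollary \ref{detSV} essentially verbatim, reducing everything to transport of structure along the isomorphism of Proposition \ref{isoUF}. By Condition 4 of Theorem \ref{pvVah}, this isomorphism identifies $\widetilde{\operatorname{V}}(V,q)$ with the subgroup $\Gamma^{\mathbb{F}^{\times}}_{+}(V_{U,\mathbb{F}},q_{U,\mathbb{F}})$ of $\mathcal{C}_{U,\mathbb{F},+}^{\times}$ from Corollary \ref{Normgrade}, and the last assertion of that theorem says that the scalar $\alpha\delta^{*}-\beta\gamma^{*}\in\mathbb{F}^{\times}$ is precisely the norm $N$ of the element of $\mathcal{C}_{U,\mathbb{F},+}$ that the matrix corresponds to. So defining $\det\binom{\alpha\ \ \beta}{\gamma\ \ \delta}:=\alpha\delta^{*}-\beta\gamma^{*}$ yields a well-defined map $\det:\widetilde{\operatorname{V}}(V,q)\to\mathbb{F}^{\times}$ which, under the isomorphism, is exactly the restriction of $N$ to $\Gamma^{\mathbb{F}^{\times}}_{+}(V_{U,\mathbb{F}},q_{U,\mathbb{F}})$.

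To see that $\det$ is a homomorphism it suffices to check that $N$ is multiplicative on this subgroup. This is recorded in the paragraph following Lemma \ref{NinFx}: in the paravector setting the image of $N$ lies in $\widetilde{Z}(\mathcal{C}_{U,\mathbb{F}})_{+}^{\times}=Z(\mathcal{C}_{U,\mathbb{F}})_{+}^{\times}$, which is central, so the argument from the proof of Lemma \ref{NinFx} shows directly that $N(\alpha\beta)=N(\alpha)N(\beta)$. Restricting to elements with scalar norms then gives the homomorphism $\det$, exactly as in Corollary \ref{detSV}.

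For the kernel, the identification above shows that a matrix in $\widetilde{\operatorname{V}}(V,q)$ lies in $\ker(\det)$ if and only if the corresponding element of $\Gamma^{\mathbb{F}^{\times}}_{+}(V_{U,\mathbb{F}},q_{U,\mathbb{F}})$ has norm $1$, i.e., belongs to $\Gamma^{1}_{+}(V_{U,\mathbb{F}},q_{U,\mathbb{F}})$ as defined in Corollary \ref{kerN}. Translating back through Proposition \ref{isoUF}, the equivalent descriptions of this kernel are obtained from Conditions 1--3 of Theorem \ref{pvVah} by replacing each occurrence of the clause ``$\alpha\delta^{*}-\beta\gamma^{*}\in\mathbb{F}^{\times}$'' with ``$\alpha\delta^{*}-\beta\gamma^{*}=1$'', and replacing the subgroup in Condition 4 by $\Gamma^{1}_{+}(V_{U,\mathbb{F}},q_{U,\mathbb{F}})$. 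I do not anticipate a real obstacle: the only point requiring care is the multiplicativity of $N$, and this is already settled by the remark after Lemma \ref{NinFx}; the rest is bookkeeping along the established isomorphism.
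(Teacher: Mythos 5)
Your proposal is correct and follows essentially the route the paper intends: the paper derives Corollary \ref{SVpara} (like Corollary \ref{detSV}) directly from the last assertion of Theorem \ref{pvVah}, identifying the pseudo-determinant with the norm on $\Gamma^{\mathbb{F}^{\times}}_{+}(V_{U,\mathbb{F}},q_{U,\mathbb{F}})$, whose multiplicativity on elements with scalar norm is the content of the proof of Lemma \ref{NinFx} (and Corollary \ref{kerN}), so the kernel is exactly $\Gamma^{1}_{+}(V_{U,\mathbb{F}},q_{U,\mathbb{F}})$. The only cosmetic slip is attributing the multiplicativity to the centrality remark about the paravector Clifford group of $(V,q)$; the relevant group here is the \emph{even} Clifford group of $(V_{U,\mathbb{F}},q_{U,\mathbb{F}})$, but since its norms land in $\widetilde{Z}(\mathcal{C}_{U,\mathbb{F}})_{+}^{\times}$ (indeed in $\mathbb{F}^{\times}$ for the elements at hand) the argument goes through unchanged.
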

The kernel from Corollary \ref{SVpara} is called the \emph{special paravector Vahlen group} $\widetilde{\operatorname{SV}}(V,q)$. This corollary is the generalization of Theorem 4.1 of \cite{[EGM]}.

We conclude with some extra properties of the entries of an element of the Vahlen and paravector Vahlen groups.
\begin{rmk}
If $\binom{\alpha\ \ \beta}{\gamma\ \ \delta}$ is a matrix in $\operatorname{M}_{2}(\mathcal{C})$ that lies in either $\operatorname{V}(V,q)$ or $\widetilde{\operatorname{V}}(V,q)$, then $\overline{\alpha}$, $\overline{\beta}$, $\overline{\gamma}$, and $\overline{\delta}$ have the same norms as $\alpha$, $\beta$, $\gamma$, and $\delta$ respectively. This follows from the formula for the inverse of a our matrix being $\frac{1}{\alpha\delta^{*}-\beta\gamma^{*}}\binom{\alpha\ \ \beta}{\gamma\ \ \delta}\mapsto\binom{\ \ \delta^{*}\ \ -\beta^{*}}{-\gamma^{*}\ \ \alpha^{*}}$. Moreover, if our matrix is in $\operatorname{V}(V,q)$ (resp. $\widetilde{\operatorname{V}}(V,q)$) and $v$ is in $V$ (resp. $\xi$ is in $\mathbb{F} \oplus V$) then the scalars $\alpha v\overline{\beta}+\beta\overline{v}\overline{\alpha}$ and $\gamma v\overline{\delta}+\delta\overline{v}\overline{\gamma}$ (resp. $\alpha\xi\overline{\beta}+\beta\overline{\xi}\overline{\alpha}$ and $\gamma\xi\overline{\delta}+\delta\overline{\xi}\overline{\gamma}$) from Condition 1 of Theorem \ref{Vahlen} (resp. Theorem \ref{pvVah}) are also equal to minus the pairing of $v$ (resp. $\xi$) with $\overline{\alpha}\beta$ and $\overline{\gamma}\delta$. To see this, recall that as elements of $V$ or of $\mathbb{F} \oplus V$, the elements $\overline{\beta}\alpha$ and $\overline{\delta}\gamma$ equal $\overline{\overline{\alpha}\beta}=(\overline{\alpha}\beta)'$ and $\overline{\overline{\gamma}\delta}=(\overline{\gamma}\delta)'$ respectively. We multiply the first scalar from Theorem \ref{Vahlen} or \ref{pvVah} by $\alpha$ and by $\beta$ from the right and multiply the formula for the first pairing in Equation \eqref{pairC} or \eqref{parabil} by these elements of $\mathcal{T}(V,q)$ or $\widetilde{\mathcal{T}}(V,q)$ from the left, do the same for the second scalar and pairing with $\gamma$ and $\delta$, observe that we obtain the same values in all these operations, and use the invertibility of $\alpha\delta^{*}-\beta\gamma^{*}=\delta\alpha^{*}-\gamma\beta^{*}$ (this equality follows from the fact that this element is in $\mathbb{F}$ and is thus invariant under transposition) for obtaining the required equality. \label{invV}
\end{rmk}

\section{Group Actions via M\"{o}bius Transformations \label{ActVah}}

The construction of the hyperbolic half-space, as a symmetric space of the orthogonal group of a Lorentzian space of signature $(n,1)$, is based on presentations of vectors of norm $-1$ using related spaces. This is the relation between the hyperboloid and half-space models of this symmetric space. The action on the latter space is given in terms of M\"{o}bius transformations. We now generalize this construction to our more general setting, with emphasis on the precise form of the boundary elements that one must add for obtaining a symmetric space with a well-defined action. In this section we do it for the Vahlen group as defined in Theorem \ref{Vahlen}, and the paravector Vahlen group from Theorem \ref{pvVah} will be considered in the next section.

For this we shall make the action of the Vahlen group $\operatorname{V}(V,q)$ defined via Theorem \ref{Vahlen} on the quadratic space $(V_{U},q_{U})$ appearing in Proposition \ref{isoM2} more explicit. Recall that the Clifford algebra $\mathcal{C}_{U}$ that is associated with $(V_{U},q_{U})$ is acted by its group of units $\mathcal{C}_{U}^{\times}$ by $\eta\in\mathcal{C}_{U}^{\times}$ taking $\psi\in\mathcal{C}_{U}$ to $\eta\psi\eta^{*}$. We shall restrict attention to $\psi \in V_{U}$, where the latter space is decomposed as the direct sum of $V$, $\mathbb{F}e$, and $\mathbb{F}f$, and present $\eta$ via the isomorphism from Proposition \ref{isoM2}. As usual, $\mathcal{C}$ stands for the Clifford algebra $\mathcal{C}(V,q)$ of our original space $(V,q)$.
\begin{lem}
If $\eta$ corresponds to the matrix $\binom{\alpha\ \ \beta}{\gamma\ \ \delta}$, where $\alpha$, $\beta$, $\gamma$, and $\delta$ are in $\mathcal{C}$, then the action of $\eta$ takes $e$ to $N(\alpha)e+N(\gamma)'f+\alpha\overline{\gamma}ef+(\gamma\overline{\alpha})'fe$, and $f$ to $N(\beta)e+N(\delta)'f+\beta\overline{\delta}ef+(\delta\overline{\beta})'fe$. Moreover, any $u \in V$ is sent by this action to \[(\alpha u\overline{\beta}+\beta\overline{u}\overline{\alpha})e+(\gamma u\overline{\delta}+\delta\overline{u}\overline{\gamma})'f+(\alpha u\overline{\delta}+\beta\overline{u}\overline{\gamma})ef+(\gamma u\overline{\beta}+\delta\overline{u}\overline{\alpha})'fe.\] \label{actVU}
\end{lem}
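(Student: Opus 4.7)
The plan is to reduce the Clifford conjugation $\psi\mapsto\eta\psi\eta^{*}$ to an ordinary $2\times2$ matrix multiplication via the algebra isomorphism $\operatorname{M}_{2}(\mathcal{C})\cong\mathcal{C}_{U}$ from Proposition \ref{isoM2}, and then to translate the resulting matrices back into $\mathcal{C}_{U}$ using that same isomorphism in reverse. Writing $M:=\binom{\alpha\ \ \beta}{\gamma\ \ \delta}$ for the matrix of $\eta$, Proposition \ref{isoM2} identifies $\eta^{*}$ with $\widetilde{M}:=\binom{\overline{\delta}\ \ \overline{\beta}}{\overline{\gamma}\ \ \overline{\alpha}}$, so the entire computation becomes a triple product $M\cdot M_{\psi}\cdot\widetilde{M}$, where $M_{\psi}$ is the matrix preimage of $\psi\in V_{U}$.

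I first identify these preimages. From the defining formula $\binom{\alpha\ \ \beta}{\gamma\ \ \delta}\leftrightarrow\alpha ef+\beta e+\gamma'f+\delta'fe$, one reads off immediately that $e$ corresponds to $\binom{0\ \ 1}{0\ \ 0}$ and $f$ to $\binom{0\ \ 0}{1\ \ 0}$ (using $1'=1$). The preimage of a vector $u\in V$ is the more delicate matrix $\binom{u\ \ 0}{0\ \ -u}$, which the isomorphism sends to $u\cdot ef+(-u)'\cdot fe=u(ef+fe)=u$ once one invokes $u'=-u$ for the odd element $u\in\mathcal{C}_{-}$. With these identifications in hand, the $e$ case collapses to the product $\binom{0\ \ \alpha}{0\ \ \gamma}\widetilde{M}=\binom{\alpha\overline{\gamma}\ \ N(\alpha)}{N(\gamma)\ \ \gamma\overline{\alpha}}$, whose image under the isomorphism is precisely $\alpha\overline{\gamma}\cdot ef+N(\alpha)\cdot e+N(\gamma)'\cdot f+(\gamma\overline{\alpha})'\cdot fe$; the $f$ case is entirely analogous. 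For $\psi=u$, the triple product yields a matrix whose entries have the form $\alpha u\overline{\delta}-\beta u\overline{\gamma}$, and three similar expressions, and invoking the identity $\overline{u}=-u$ (valid for $u\in V$ because $u^{*}=u$ and $u'=-u$) rewrites each minus sign into the symmetric expression $+\beta\overline{u}\overline{\gamma}$, etc., so that translating the resulting matrix back via the isomorphism gives exactly the formula asserted in the lemma.

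The only genuinely delicate step is the identification of the matrix for $u\in V$. One is easily tempted to embed $\mathcal{C}\subset\mathcal{C}_{U}$ into $\operatorname{M}_{2}(\mathcal{C})$ as scalar matrices $\alpha I$, but the correct embedding is $\alpha\mapsto\binom{\alpha\ \ 0}{0\ \ \alpha'}$, because $\alpha=\alpha(ef+fe)$ inside $\mathcal{C}_{U}$ and the coefficient of $fe$ in the isomorphism is pre-twisted by the grading involution; for an odd element such as $u\in V$ this twist flips the sign of the $(2,2)$-entry. Once this observation is in place, the remainder is a routine multiplication of $2\times2$ matrices with non-commutative entries, followed by a single use of $\overline{u}=-u$ to put the answer into the symmetric shape written in the statement.
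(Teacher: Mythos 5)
Your proof is correct, but it takes a genuinely different route from the paper's. The paper computes $\eta\psi\eta^{*}$ directly inside $\mathcal{C}_{U}$: it writes $\eta^{*}=fe\alpha^{*}+e\beta^{*}+f\overline{\gamma}+ef\overline{\delta}$, expands the product, and simplifies using $e^{2}=f^{2}=0$, $efe=e$, $fef=f$, and the (anti-)commutation of $e$ and $f$ with $V$ and with graded elements of $\mathcal{C}$. You instead transport the entire computation to $\operatorname{M}_{2}(\mathcal{C})$ through the isomorphism of Proposition \ref{isoM2}, reading the matrix of $\eta^{*}$ off the stated matrix form of the transposition and identifying the matrix preimages of $e$, $f$, and $u \in V$. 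Your identifications check out: $e\leftrightarrow\binom{0\ \ 1}{0\ \ 0}$, $f\leftrightarrow\binom{0\ \ 0}{1\ \ 0}$, and $u\leftrightarrow\binom{u\ \ 0}{0\ \ -u}$, where the sign in the $(2,2)$-entry is precisely the grading twist on the $fe$-coefficient of the isomorphism; this last point is the one genuinely non-obvious step and you flag and justify it correctly. The three $2\times2$ products then reproduce the asserted formulae after the substitution $\overline{u}=-u$, and the placement of the grading involution on the second-row entries matches the $\gamma'$ and $\delta'$ twists in the isomorphism. What your approach buys is that the Clifford manipulations are done once and for all in Proposition \ref{isoM2}, after which everything is mechanical matrix multiplication with noncommutative entries; what the paper's direct computation buys is that it needs only the forward formula for the isomorphism and not the matrix expressions for the involutions. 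Both arguments use nothing about the invertibility of $\eta$, consistent with the remark following the lemma that the same formula holds for every $\eta\in\mathcal{C}_{U}$.
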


\begin{proof}
The element of $\mathcal{C}_{U}^{\times}$ that is associated with $\eta$ is $\alpha ef+\beta e+\gamma'f+\delta'fe$ by the formula from Proposition \ref{isoM2}, and $\eta^{*}$ is thus $fe\alpha^{*}+e\beta^{*}+f\overline{\gamma}+ef\overline{\delta}$. Recalling that $e^{2}=q(e)=0$ in $\mathcal{C}_{U}$, so that $efe=e(ef+fe)=e$ (since $ef+fe=(e,f)=1$), the product $\eta e\eta^{*}$ reduces to $\alpha e\alpha^{*}+\alpha ef\overline{\gamma}+\gamma'fe\alpha^{*}+\gamma'fef\overline{\gamma}$. But with $f^{2}=q(f)=0$ we get $fef=f$ as well, and $e$ and $f$ satisfy a commutation relation like in Equation \eqref{twZClgrp} with elements of $\mathcal{C}$, so that the definition of the norm and its evident commutation with the grading involution yields the desired expression. Similarly, the expression for $\eta f\eta^{*}$ is $\beta efe\beta^{*}+\beta ef\overline{\delta}+\delta'fe\beta^{*}+\delta'f\overline{\delta}$, which becomes the required expression by similar considerations.

Consider now $u \in V$, and recall that $eu=-ue$ and $fu=-uf$ in $\mathcal{C}_{U}$, so that $ef$ and $fe$ commute with $u$. This presents $\eta u\eta^{*}$ as \[\alpha uef\overline{\delta}+\alpha ue\beta^{*}-\beta ue\alpha^{*}-\beta uef\overline{\gamma}-\gamma'ufe\beta^{*}-\gamma'uf\overline{\delta}+\delta'ufe\alpha^{*}+\delta'uf\overline{\gamma}.\] The commutation relations between $e$, $f$, and elements of $\mathcal{C}$, and the fact that transposition is the composition of the grading and Clifford involution and we have $u'=\overline{u}=-u$ for $u \in V$ transform the latter expression into the asserted formula. This proves the lemma.
\end{proof}
In fact, we do not require $\eta$ to be invertible in Lemma \ref{actVU}, and the same formula holds equally well for every $\eta\in\mathcal{C}_{U}$. We are, however, more interested in the action of the Vahlen group $\operatorname{V}(V,q)$.
\begin{cor}
If the matrix $\binom{\alpha\ \ \beta}{\gamma\ \ \delta}$ lies in $\operatorname{V}(V,q)$, then its action via Lemma \ref{actVU} takes $e$, $f$, and $u \in V$ to $\alpha\overline{\gamma}+N(\alpha)e+N(\gamma)f$, $\beta\overline{\delta}+N(\beta)e+N(\delta)f$, and $(\alpha u\overline{\delta}+\beta\overline{u}\overline{\gamma})+(\alpha u\overline{\beta}+\beta\overline{u}\overline{\alpha})e+(\gamma u\overline{\delta}+\delta\overline{u}\overline{\gamma})f$ respectively, all of which lie in $V_{U}\subseteq\mathcal{C}_{U}$. \label{VUact}
\end{cor}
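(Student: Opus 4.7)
The plan is to substitute the Vahlen-group conditions from Theorem \ref{Vahlen} into the raw formulas of Lemma \ref{actVU}, and to observe that in each case the coefficients of $ef$ and $fe$ collapse into a single element of $V$ via the hyperbolic identity $ef+fe=1$. First, since $\alpha$, $\beta$, $\gamma$, and $\delta$ all lie in $\mathcal{T}(V,q)$, their norms $N(\alpha)$, $N(\beta)$, $N(\gamma)$, $N(\delta)$ lie in $\mathbb{F}$ and are fixed by the grading involution; this immediately gives the coefficients of $e$ and $f$ in the images of $e$ and $f$. For the action on $u \in V$, Condition 1 of Theorem \ref{Vahlen} supplies that $\alpha u\overline{\beta}+\beta\overline{u}\overline{\alpha}$ and $\gamma u\overline{\delta}+\delta\overline{u}\overline{\gamma}$ are scalars, hence again invariant under $'$.

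The crucial step is to verify that in each of the three cases the coefficient of $fe$ in Lemma \ref{actVU}, after applying $'$, equals the coefficient of $ef$; granting this, one rewrites the combined $ef$- and $fe$-contribution as $v(ef+fe)=v$, where $v$ is the common value. For the image of $e$, take $v=\alpha\overline{\gamma}$, which lies in $V$ by Theorem \ref{Vahlen}. Applying the Clifford involution gives $\overline{\alpha\overline{\gamma}}=\gamma\overline{\alpha}$, and since any $v \in V$ satisfies both $\overline{v}=-v$ and $v'=-v$, the combined operation $(\gamma\overline{\alpha})'$ recovers $v$. The same reasoning, with $v=\beta\overline{\delta}\in V$, handles the image of $f$, and with $v=\alpha u\overline{\delta}+\beta\overline{u}\overline{\gamma}\in V$ it handles the image of $u$.

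The main delicate point, and the only genuine bookkeeping, is this last case: one must check that $\overline{\alpha u\overline{\delta}+\beta\overline{u}\overline{\gamma}}$ equals $\delta\overline{u}\overline{\alpha}+\gamma u\overline{\beta}$, which is exactly the coefficient of $fe$ appearing in Lemma \ref{actVU} before $'$ is applied. This uses the order-reversal of the Clifford involution on a threefold product, together with $\overline{\overline{\alpha}}=\alpha$ and $\overline{\overline{u}}=u$; since $\alpha u\overline{\delta}+\beta\overline{u}\overline{\gamma}\in V$, applying $'$ to the resulting expression negates it a second time and yields the required equality. Substituting these three simplifications back into Lemma \ref{actVU} produces the asserted formulas, whose three summands lie in $V$, $\mathbb{F}e$, and $\mathbb{F}f$ respectively, so that the image lies in $V_U\subseteq\mathcal{C}_U$ as claimed.
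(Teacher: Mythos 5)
Your proposal is correct and follows essentially the same route as the paper's own proof: substitute Condition 1 of Theorem \ref{Vahlen} into Lemma \ref{actVU}, note that the $e$- and $f$-coefficients are scalars fixed by the grading involution, and observe that each $fe$-coefficient is the image of the corresponding $ef$-coefficient (an element of $V$) under the Clifford involution followed by the grading involution, hence equal to it, so the two terms combine via $ef+fe=1$. The only difference is that you carry out the order-reversal bookkeeping explicitly where the paper states it in one line; the content is identical.
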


\begin{proof}
Condition 1 of Theorem \ref{Vahlen} implies that $N(\alpha)$, $N(\beta)$, $\alpha u\overline{\beta}+\beta\overline{u}\overline{\alpha}$, $N(\gamma)$, $N(\delta)$, and $\gamma u\overline{\delta}+\delta\overline{u}\overline{\gamma}$ appearing in the coefficients of $e$ in $f$ in Lemma \ref{actVU}, are all elements of $\mathbb{F}$ (so that in particular the grading involution leaves the latter three elements invariant). Moreover, $\alpha\overline{\gamma}$, $\beta\overline{\delta}$, and $\alpha u\overline{\delta}+\beta\overline{u}\overline{\gamma}$, appearing as coefficients of $ef$, all lie in $V$, so that they are all invariant under applying the Clifford involution and then the grading involution, which yields the respective coefficients of $fe$. As these vectors are multiplied by $ef+fe=1$, we obtain the asserted expressions, which therefore indeed lie in $V\oplus\mathbb{F}e\oplus\mathbb{F}f=V_{U}$ as desired. This proves the corollary.
\end{proof}
The part of Corollary \ref{VUact} stating that this action of $\operatorname{V}(V,q)$ preserves $V_{U}$ also follows from the isomorphism from Theorem \ref{Vahlen}, the relation with the norm there (which implies that $\eta^{*}$ is the scalar $\det\binom{\alpha\ \ \beta}{\gamma\ \ \delta}$ from Corollary \ref{detSV} times $\eta'^{-1}$), and the definition of $\Gamma^{\mathbb{F}^{\times}}(V_{U},q_{U})$ in Lemma \ref{NinFx} and Equation \eqref{twZClgrp}. However, we shall need the explicit formulae appearing in that corollary as well.

\smallskip

As the space $V_{U}^{\perp}$ defined as in Equation \eqref{Vperp} is the image of $V^{\perp}$ in $V_{U}$, just like for $V_{\mathbb{F}}$ above, the group associated with $(V_{U},q_{U})$ in Equation \eqref{orthgrp} is $\operatorname{O}_{V^{\perp}}(V_{U},q_{U})$, and was seen in Lemma \ref{NinFx} to be the image of $\Gamma^{\mathbb{F}^{\times}}(V_{U},q_{U})$, and therefore also of its isomorph $\operatorname{V}(V,q)$ from Theorem \ref{Vahlen}. Since it preserves the value of $q_{U}$, we denote, for every $c\in\mathbb{F}$, set of all vectors $w \in V_{U} \setminus V^{\perp}$ with $q_{U}(w)=c$ by $\mathbf{K}_{V,q}^{c}$ (the restriction that $w \not\in V^{\perp}$ affects only the definition of $\mathbf{K}_{V,q}^{0}$, of course). We shall also need the direct sum $V_{\mathbb{F}}^{c}$ of $V$ with the 1-dimensional space $\mathbb{F}\sigma_{c}$, with the quadratic form $q_{\mathbb{F}}^{c}$ making these subspaces orthogonal, restricting to $q$ on $V$, and takes $\sigma_{c}$ to $-c$ (so that if $z=v+t\sigma_{c}$ is a general element of $V_{\mathbb{F}}^{c}$, with $v \in V$ and $t\in\mathbb{F}$, then $q_{\mathbb{F}}^{c}(z)=q(v)-ct^{2}$). In particular, the space $V_{\mathbb{F}}$ considered above is $V_{\mathbb{F}}^{1}$, with $\rho=\sigma_{1}$, i.e., the special case with $c=1$.

The following lemma relates (Zariski open) subsets of $\mathbf{K}_{V,q}^{c}$ and $V_{\mathbb{F}}^{c}$.
\begin{lem}
Let $\mathbf{K}_{V,q}^{c,o}$ denote the set of those $w\in\mathbf{K}_{V,q}^{c}$ whose pairing with $e$ in $V_{U}$ is non-zero. Then the elements of $\mathbf{K}_{V,q}^{c,o}$ are precisely the vectors of the form $w=\frac{v+f+(ct^{2}-q(v))e}{t}$ for $v \in V$ and $t\in\mathbb{F}^{\times}$. They are in one-to-one correspondence with elements of the set-theoretic complement $\mathbf{H}_{V,q}^{c,o}:=V_{\mathbb{F}}^{c} \setminus V$ of $V$ inside $V_{\mathbb{F}}^{c}$, in which our $w\in\mathbf{K}_{V,q}^{c,o}$ is associated with $z=v+t\sigma_{c}\in\mathbf{H}_{V,q}^{c,o}$, again with $v \in V$ and $t\in\mathbb{F}^{\times}$. \label{normc}
\end{lem}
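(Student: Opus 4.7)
The plan is to decompose a general element of $V_{U}$ in the natural basis and unpack the two defining conditions of $\mathbf{K}_{V,q}^{c,o}$ directly. Write $w=u+ae+bf$ with $u \in V$ and $a,b\in\mathbb{F}$. Using that $(e,e)=0$, $(e,f)=1$, and $V$ is orthogonal to $U$, the pairing $(w,e)$ equals $b$, so the open condition defining $\mathbf{K}_{V,q}^{c,o}$ reduces to $b\neq0$. Since the paper has already observed that $V_{U}^{\perp}$ is the image of $V^{\perp}\subseteq V$, any $w$ with $b\neq0$ is automatically outside $V$, and hence outside $V^{\perp}$; so the exclusion of $V^{\perp}$ in the definition of $\mathbf{K}_{V,q}^{c}$ is vacuous here. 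The quadratic condition $q_{U}(w)=c$ becomes $q(u)+ab=c$, which for $b\neq 0$ uniquely determines $a=(c-q(u))/b$.

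Next, I would set up the explicit parametrization. Given $w\in\mathbf{K}_{V,q}^{c,o}$ as above, define $t:=1/b\in\mathbb{F}^{\times}$ and $v:=tu\in V$. Substituting $b=1/t$ and $u=v/t$ into $a=(c-q(u))/b$ and clearing denominators gives $a=(ct^{2}-q(v))/t$, and factoring $1/t$ out of $w$ yields exactly the claimed formula $w=(v+f+(ct^{2}-q(v))e)/t$. Conversely, for any $v\in V$ and $t\in\mathbb{F}^{\times}$, I would read off the coefficients $b=1/t\neq0$, $u=v/t$, and $a=(ct^{2}-q(v))/t$, and then verify
\[
q_{U}(w)=q(u)+ab=\tfrac{q(v)}{t^{2}}+\tfrac{1}{t}\cdot\tfrac{ct^{2}-q(v)}{t}=c,
\]
so that $w\in\mathbf{K}_{V,q}^{c,o}$. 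This establishes the first assertion and realizes $\mathbf{K}_{V,q}^{c,o}$ as the image of $V\times\mathbb{F}^{\times}$ under an injective map.

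For the correspondence with $\mathbf{H}_{V,q}^{c,o}$, I would note that an element of $V_{\mathbb{F}}^{c}$ has a unique presentation $z=v+t\sigma_{c}$ with $v\in V$ and $t\in\mathbb{F}$, and lies in $V_{\mathbb{F}}^{c}\setminus V$ precisely when $t\neq 0$. Thus $\mathbf{H}_{V,q}^{c,o}$ is parametrized by the same data $(v,t)\in V\times\mathbb{F}^{\times}$, and sending $z=v+t\sigma_{c}\in\mathbf{H}_{V,q}^{c,o}$ to the $w$ given by the displayed formula is the required bijection.

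There is no real obstacle here; the only points that require care are keeping track of the extra factor of $1/t$ in the $f$-component (which dictates the inversion $t=1/b$) and observing that the exclusion $w\notin V^{\perp}$ is automatic in the open locus $b\neq 0$, so that it does not impose an additional constraint on the parameters $(v,t)$.
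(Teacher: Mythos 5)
Your proposal is correct and follows essentially the same route as the paper: identify the pairing with $e$ as the reciprocal $1/t$ of the $f$-coefficient, solve the condition $q_{U}(w)=c$ for the $e$-coefficient, and observe that both $\mathbf{K}_{V,q}^{c,o}$ and $\mathbf{H}_{V,q}^{c,o}$ are uniquely parametrized by the pair $(v,t) \in V\times\mathbb{F}^{\times}$. The only additions beyond the paper's argument are the explicit verification of the converse direction and the (correct) remark that the exclusion of $V^{\perp}$ is vacuous on the locus where the pairing with $e$ is non-zero.
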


\begin{proof}
The non-zero pairing of $w$ with $e$ can be written as $\frac{1}{t}$ for a unique $t\in\mathbb{F}^{\times}$. Then we can write our vector $w$ as $\frac{v+f+se}{t}$ for $v \in V$ and $s\in\mathbb{F}$, and since the $q_{U}$-image of this $w$ is $\frac{q(v)+s}{t^{2}}$, comparing it with $c$ implies that $s=ct^{2}-q(v)$ as asserted. The unique presentation of $w$, as well as $z$, in terms of $v \in V$ and $t\in\mathbb{F}^{\times}$ shows that the correspondence between $\mathbf{K}_{V,q}^{c,o}$ and $\mathbf{H}_{V,q}^{c,o}$ is bijective. This proves the lemma.
\end{proof}

The M\"{o}bius transformation formula makes use of the following result.
\begin{lem}
For $z \in V_{\mathbb{F}}^{c}$ and $\eta=\binom{\alpha\ \ \beta}{\gamma\ \ \delta}\in\operatorname{V}(V,q)$, the norms $N(\gamma z+\delta)$ and $N(\alpha z+\beta)$ lie in $\mathbb{F}$, and $(\alpha z+\beta)\overline{(\gamma z+\delta)}$ lies in $V_{\mathbb{F}}^{c}$. Moreover, if $z$ lies in $\mathbf{H}_{V,q}^{c,o}$ then so does the latter vector. \label{Ncz+d}
\end{lem}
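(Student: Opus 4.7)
The plan is to embed $V_{\mathbb{F}}^{c}$ into the larger Clifford algebra $\mathcal{C}(V_{\mathbb{F}}^{c},q_{\mathbb{F}}^{c})$, which (since $\sigma_{c}\perp V$) decomposes as $\mathcal{C}\oplus\mathcal{C}\sigma_{c}$ and so lets us interpret $\gamma z+\delta$ and compute its norm there. In this algebra $\sigma_{c}^{2}=-c$, $\overline{\sigma_{c}}=-\sigma_{c}$, and $\sigma_{c}$ anti-commutes with $V$, which extends (exactly as in Lemma \ref{isotoC+} for $\rho$) to the super-commutation rule $\sigma_{c}\alpha=\alpha'\sigma_{c}$ for every $\alpha\in\mathcal{C}$. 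Writing $z=v+t\sigma_{c}$ with $v \in V$ and $t\in\mathbb{F}$, and using also $\overline{v}=-v$ on $V$, this immediately yields
\[\overline{\gamma z+\delta}=\overline{\delta}-v\overline{\gamma}-t\overline{\gamma}'\sigma_{c}.\]

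Expanding $N(\gamma z+\delta)=(\gamma z+\delta)\overline{(\gamma z+\delta)}$, the two linear-in-$t$ cross terms involving $v$ cancel (once $\sigma_{c}$ is moved past $v$ and past $\overline{\gamma}$), while the quadratic-in-$t$ term produces the factor $\sigma_{c}^{2}=-c$. Collecting by type I expect
\[N(\gamma z+\delta)=N(\delta)+\bigl(\gamma v\overline{\delta}-\delta v\overline{\gamma}\bigr)+\bigl(ct^{2}-q(v)\bigr)N(\gamma)+t\bigl(\gamma\overline{\delta}'-\delta\overline{\gamma}'\bigr)\sigma_{c}.\]
The three summands in $\mathcal{C}$ sit in $\mathbb{F}$ by Condition 1 of Theorem \ref{Vahlen} (with $\overline{v}=-v$ identifying $\gamma v\overline{\delta}-\delta v\overline{\gamma}$ with $\gamma v\overline{\delta}+\delta\overline{v}\overline{\gamma}\in\mathbb{F}$), and the identity $\overline{\alpha}'=\alpha^{*}$ together with $\gamma\delta^{*}=\delta\gamma^{*}$ from the same condition collapses the $\sigma_{c}$-coefficient to $0$. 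This gives $N(\gamma z+\delta)\in\mathbb{F}$, and the same computation with $(\alpha,\beta)$ in place of $(\gamma,\delta)$ gives $N(\alpha z+\beta)\in\mathbb{F}$.

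For the product $(\alpha z+\beta)\overline{(\gamma z+\delta)}$ the analogous expansion yields
\[(\alpha z+\beta)\overline{(\gamma z+\delta)}=\beta\overline{\delta}+\bigl(\alpha v\overline{\delta}-\beta v\overline{\gamma}\bigr)+\bigl(ct^{2}-q(v)\bigr)\alpha\overline{\gamma}+t(\alpha\delta^{*}-\beta\gamma^{*})\sigma_{c},\]
where again $\overline{\alpha}'=\alpha^{*}$ has been used on the $\sigma_{c}$-term. By Condition 1 of Theorem \ref{Vahlen} each of $\alpha\overline{\gamma}$, $\beta\overline{\delta}$, and $\alpha v\overline{\delta}+\beta\overline{v}\overline{\gamma}=\alpha v\overline{\delta}-\beta v\overline{\gamma}$ lies in $V$, so the $\mathcal{C}$-part is a sum of vectors in $V$, and the $\sigma_{c}$-coefficient equals $t\det(\eta)$ with $\det(\eta)=\alpha\delta^{*}-\beta\gamma^{*}\in\mathbb{F}^{\times}$. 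Hence $(\alpha z+\beta)\overline{(\gamma z+\delta)}\in V\oplus\mathbb{F}\sigma_{c}=V_{\mathbb{F}}^{c}$, and its $\sigma_{c}$-coordinate vanishes if and only if $t=0$. Since $\mathbf{H}_{V,q}^{c,o}=V_{\mathbb{F}}^{c}\setminus V$ by definition, we conclude that $z\in\mathbf{H}_{V,q}^{c,o}$ implies $(\alpha z+\beta)\overline{(\gamma z+\delta)}\in\mathbf{H}_{V,q}^{c,o}$.

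The only real obstacle is the bookkeeping: one must consistently track the sign $\overline{v}=-v$ on $V$, the rule $\sigma_{c}\alpha=\alpha'\sigma_{c}$ (so the mixed $t$-terms cancel and the $t^{2}$-term picks up $-c$), and the identity $\overline{\alpha}'=\alpha^{*}$ that converts the $\sigma_{c}$-coefficients into the forms $\gamma\delta^{*}-\delta\gamma^{*}$ and $\alpha\delta^{*}-\beta\gamma^{*}$ directly handled by Theorem \ref{Vahlen}. No ingredient beyond Condition 1 of that theorem is needed.
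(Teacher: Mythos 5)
Your computation is correct and follows essentially the same route as the paper's proof: both expand $(\gamma z+\delta)\overline{(\gamma z+\delta)}$ and $(\alpha z+\beta)\overline{(\gamma z+\delta)}$ inside $\mathcal{C}(V_{\mathbb{F}}^{c},q_{\mathbb{F}}^{c})$ using the relations $\sigma_{c}^{2}=-c$, $\overline{\sigma_{c}}=-\sigma_{c}$, and $\sigma_{c}\overline{\gamma}=\gamma^{*}\sigma_{c}$, arrive at the identical closed formulas, and then invoke Condition 1 of Theorem \ref{Vahlen} (including $\gamma\delta^{*}=\delta\gamma^{*}$ and $\alpha\delta^{*}-\beta\gamma^{*}\in\mathbb{F}^{\times}$) to place each piece in $\mathbb{F}$, $V$, or $\mathbb{F}^{\times}\cdot t\sigma_{c}$. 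The final observation that the $\sigma_{c}$-coefficient $t\det\eta$ is nonzero exactly when $t\neq0$ matches the paper's argument for preservation of $\mathbf{H}_{V,q}^{c,o}$.
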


\begin{proof}
We recall that $z=v+t\sigma_{c}$, and that in $\mathcal{C}(V_{\mathbb{F}}^{c},q_{\mathbb{F}}^{c})$, the vector $\sigma_{c}$ is inverted by the Clifford involution and satisfies $\sigma_{c}\overline{\gamma}=\gamma^{*}\sigma_{c}$ for the image of $\gamma\in\mathcal{C}$ inside $\mathcal{C}(V_{\mathbb{F}}^{c},q_{\mathbb{F}}^{c})$ (and similarly for $\alpha$, $\beta$, and $\delta$). Then $N(\gamma z+\delta)=(\gamma z+\delta)\overline{(\gamma z+\delta)}$ equals \[\gamma N(z)\overline{\gamma}+\gamma v\overline{\delta}+\delta\overline{v}\overline{\gamma}+t(\gamma\delta^{*}-\delta\gamma^{*})\sigma_{c}+\delta\overline{\delta}=-q_{\mathbb{F}}^{c}(z)N(\gamma)+(\gamma v\overline{\delta}+\delta\overline{v}\overline{\gamma})+N(\delta)\] (because $\gamma\delta^{*}=\delta\gamma^{*}$ in Condition 1 of Theorem \ref{Vahlen} and $N(z)=-q_{\mathbb{F}}^{c}(z)$ for $z \in V_{\mathbb{F}}^{c}$, which lies in $\mathbb{F}$ and is hence central), and this expression lies in $\mathbb{F}$ by Condition 1 of Theorem \ref{Vahlen}. The same argument proves that \[N(\alpha z+\beta)=-q_{\mathbb{F}}^{c}(z)N(\alpha)+(\alpha v\overline{\beta}+\beta\overline{v}\overline{\alpha})+N(\beta)\in\mathbb{F}\] as well, and similar considerations show that \[(\alpha z+\beta)\overline{(\gamma z+\delta)}=-q_{\mathbb{F}}^{c}(z)\alpha\overline{\gamma}+(\alpha v\overline{\delta}+\beta\overline{v}\overline{\gamma})+t(\alpha\delta^{*}-\beta\gamma^{*})\sigma_{c}+\beta\overline{\delta},\] where the first, second, and fourth terms lie in $V$ and the third one is a non-zero scalar multiple of $t\sigma_{c}$ by Theorem \ref{Vahlen}. This proves the lemma.
\end{proof}

\begin{cor}
Consider the matrix $\eta$ and the element $z=v+t\sigma_{c}$ from Lemma \ref{normc}. Then the $q$-value of the $V$-part of the vector $(\alpha z+\beta)\overline{(\gamma z+\delta)} \in V_{\mathbb{F}}^{c}$ from that lemma is $ct^{2}\det^{2}\eta-N(\alpha z+\beta)N(\gamma z+\delta)$, with $\det\eta$ being the element $\alpha\delta^{2}-\beta\gamma^{*}\in\mathbb{F}^{\times}$ from Theorem \ref{Vahlen} and Corollary \ref{detSV}. \label{MoebVq}
\end{cor}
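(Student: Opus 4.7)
The plan is to read off, from the formula in Lemma \ref{Ncz+d}, the decomposition of the vector $(\alpha z+\beta)\overline{(\gamma z+\delta)}\in V_{\mathbb{F}}^{c}$ along its two orthogonal summands $V$ and $\mathbb{F}\sigma_{c}$, and then compute $q_{\mathbb{F}}^{c}$ of the whole vector in two different ways. By the orthogonal decomposition, $q_{\mathbb{F}}^{c}$ of the whole equals $q$ of the $V$-part minus $c$ times the square of the $\sigma_{c}$-coefficient. Solving for the $q$-value of the $V$-part will yield the asserted formula, provided I can compute $q_{\mathbb{F}}^{c}$ of the whole vector independently.

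First I would isolate the two pieces from Lemma \ref{Ncz+d}: the $V$-part is $w:=-q_{\mathbb{F}}^{c}(z)\alpha\overline{\gamma}+(\alpha v\overline{\delta}+\beta\overline{v}\overline{\gamma})+\beta\overline{\delta}$, while the $\sigma_{c}$-coefficient is $t(\alpha\delta^{*}-\beta\gamma^{*})=t\det\eta$. So the orthogonal decomposition immediately gives
\[q_{\mathbb{F}}^{c}\bigl((\alpha z+\beta)\overline{(\gamma z+\delta)}\bigr)=q(w)-ct^{2}\det{}^{2}\eta.\]

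The key second computation of $q_{\mathbb{F}}^{c}\bigl((\alpha z+\beta)\overline{(\gamma z+\delta)}\bigr)$ uses the identity $q_{\mathbb{F}}^{c}(y)=-y\overline{y}$ for $y\in V_{\mathbb{F}}^{c}\subseteq\mathcal{C}(V_{\mathbb{F}}^{c},q_{\mathbb{F}}^{c})$, which generalizes the paravector formula recalled after Equation \eqref{parabil}. Applying this to $y=(\alpha z+\beta)\overline{(\gamma z+\delta)}$ yields
\[y\overline{y}=(\alpha z+\beta)\overline{(\gamma z+\delta)}\cdot(\gamma z+\delta)\overline{(\alpha z+\beta)},\]
since the Clifford involution reverses order and is its own inverse. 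The inner factor $\overline{(\gamma z+\delta)}(\gamma z+\delta)$ equals $N(\gamma z+\delta)$, which by Lemma \ref{Ncz+d} lies in $\mathbb{F}$, hence is central, so it can be pulled out, leaving $N(\gamma z+\delta)\cdot N(\alpha z+\beta)$. Thus $q_{\mathbb{F}}^{c}\bigl((\alpha z+\beta)\overline{(\gamma z+\delta)}\bigr)=-N(\alpha z+\beta)N(\gamma z+\delta)$.

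Combining the two expressions gives $q(w)=ct^{2}\det{}^{2}\eta-N(\alpha z+\beta)N(\gamma z+\delta)$, which is the desired identity. I expect no serious obstacle: the whole argument is algebraic bookkeeping, and the only subtle point is the centrality of $N(\gamma z+\delta)$ (used to move it past the Clifford involution step), which is precisely what Lemma \ref{Ncz+d} establishes. Everything else follows from the definitions of $q_{\mathbb{F}}^{c}$, of the norm $N$, and of $\det\eta$ from Theorem \ref{Vahlen} and Corollary \ref{detSV}.
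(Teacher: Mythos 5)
Your route is the paper's own: compute $q_{\mathbb{F}}^{c}$ of the full vector as minus the product of the two norms, split the vector orthogonally into its $V$-part and $t\det\eta\cdot\sigma_{c}$, and solve for the $q$-value of the $V$-part; all of that is correct. The one step you have not justified is the identification of the inner factor $\overline{(\gamma z+\delta)}\,(\gamma z+\delta)$ with $N(\gamma z+\delta)$. With the paper's convention $N(w)=w\overline{w}$, what actually sits in the middle of $y\overline{y}$ is $N\bigl(\overline{\gamma z+\delta}\bigr)$, not $N(\gamma z+\delta)$, and Lemma \ref{Ncz+d} only asserts that the latter is a scalar. When $\gamma z+\delta$ is invertible the two agree, since $\overline{w}w=w^{-1}(w\overline{w})w$ is a conjugate of the central scalar $N(w)$; but this corollary is applied in Proposition \ref{denom0} precisely in the case $N(\gamma z+\delta)=0$, where $\gamma z+\delta$ is not invertible and that conjugation argument is unavailable. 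So you still owe an argument that $\overline{(\gamma z+\delta)}(\gamma z+\delta)$ equals the scalar $N(\gamma z+\delta)$. The paper closes exactly this gap by a parenthetical appeal to Remark \ref{invV}: the entries $\overline{\alpha},\overline{\beta},\overline{\gamma},\overline{\delta}$ have the same norms as $\alpha,\beta,\gamma,\delta$, and the cross terms such as $\gamma v\overline{\delta}+\delta\overline{v}\overline{\gamma}$ are identified with pairings of $v$ against $\overline{\gamma}\delta$; expanding $\overline{(\gamma z+\delta)}(\gamma z+\delta)$ term by term and comparing with the expansion of $N(\gamma z+\delta)$ from Lemma \ref{Ncz+d} then shows the two expressions coincide. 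With that supplement your proof is complete and coincides with the paper's.
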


\begin{proof}
If is clear that $q_{\mathbb{F}}^{c}\big((\alpha z+\beta)\overline{(\gamma z+\delta)}\big)$ equals minus the norm of that vector, which is thus $-N(\alpha z+\beta)N(\gamma z+\delta)$ by the multiplicativity of the norm when it is a scalar (the fact that the latter multiplier equals the norm of $\overline{\gamma z+\delta}$ as well is established via Remark \ref{invV}). But we saw in the proof of Lemma \ref{normc} that this vector equals its $V$-part plus $t\det\eta\cdot\sigma_{c}$, so that its $q$-image, which is the same as its $q_{\mathbb{F}}^{c}$-image, is that of $(\alpha z+\beta)\overline{(\gamma z+\delta)}$ minus that of $t\det\eta\cdot\sigma_{c}$. The asserted value is thus obtained from what we just proved and the value $-c$ of $q_{\mathbb{F}}^{c}(\sigma_{c})$. This proves the corollary.
\end{proof}

We can now establish the regular part of the M\"{o}bius transformation formula.
\begin{prop}
Let $w\in\mathbf{K}_{V,q}^{c,o}$ be attached to $z\in\mathbf{H}_{V,q}^{c,o}$ via Lemma \ref{normc}, and consider a matrix $\binom{\alpha\ \ \beta}{\gamma\ \ \delta}\in\operatorname{V}(V,q)$ such that the norm $N(\gamma z+\delta)$ from Lemma \ref{Ncz+d} does not vanish, and set $\eta\in\Gamma^{\mathbb{F}^{\times}}(V_{U},q_{U})$ to be the element associated with this matrix via Theorem \ref{Vahlen}. Then the image of $w$ under the orthogonal map $\pi_{U}(\eta)\in\operatorname{O}_{V^{\perp}}(V_{U},q_{U})$, with $\pi_{U}$ being the map from Theorem \ref{GammaO} that is associated with the quadratic space $(V_{U},q_{U})$, is the element of $\mathbf{K}_{V,q}^{c,o}$ that is associated with $(\alpha z+\beta)(\gamma z+\delta)^{-1}\in\mathbf{H}_{V,q}^{c,o}$. \label{regMoeb}
\end{prop}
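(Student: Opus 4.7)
The plan is to compute $\pi_{U}(\eta)(w)$ explicitly in the coordinates $V\oplus\mathbb{F}e\oplus\mathbb{F}f$ of $V_{U}$ via Corollary \ref{VUact}, recover its associated point in $\mathbf{H}_{V,q}^{c,o}$ using the inverse of the bijection in Lemma \ref{normc}, and then match the result against $(\alpha z+\beta)(\gamma z+\delta)^{-1}$ using the component-wise formula for $(\alpha z+\beta)\overline{(\gamma z+\delta)}$ already derived inside the proof of Lemma \ref{Ncz+d}.

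Concretely, I would write $w=\frac{v+f+(ct^{2}-q(v))e}{t}$ as in Lemma \ref{normc}, apply the action $\psi\mapsto\eta\psi\eta^{*}$ termwise using the three formulas of Corollary \ref{VUact}, and collect coefficients. The $e$-coefficient comes out to $(\alpha v\overline{\beta}+\beta\overline{v}\overline{\alpha})+N(\beta)+(ct^{2}-q(v))N(\alpha)$, which the proof of Lemma \ref{Ncz+d} identifies as $N(\alpha z+\beta)$; symmetrically, the $f$-coefficient collapses to $N(\gamma z+\delta)$; and the $V$-component is exactly the $V$-part of $(\alpha z+\beta)\overline{(\gamma z+\delta)}$ computed there. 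To pass from the raw action $\eta w\eta^{*}$ to the orthogonal map $\pi_{U}(\eta)(w)$, I invoke the identity $\eta^{*}=\det(\eta)\cdot\eta'^{-1}$ recorded in the paragraph after Corollary \ref{VUact}, which gives $\pi_{U}(\eta)(w)=\eta w\eta^{*}/\det(\eta)$. Under the hypothesis $N(\gamma z+\delta)\neq 0$, the $f$-coefficient of this element is non-zero, so the inverse direction of Lemma \ref{normc} applies: reading off the reciprocal of the $f$-coefficient and the appropriate ratio for the $V$-part yields
\[ t''=\frac{t\det(\eta)}{N(\gamma z+\delta)}, \qquad v''=\frac{[V\text{-part of }(\alpha z+\beta)\overline{(\gamma z+\delta)}]}{N(\gamma z+\delta)}. \]
Finally, $(\alpha z+\beta)(\gamma z+\delta)^{-1}=(\alpha z+\beta)\overline{(\gamma z+\delta)}/N(\gamma z+\delta)$, and the explicit expansion from Lemma \ref{Ncz+d} has $\sigma_{c}$-coefficient $t\det(\eta)$, so that this ratio equals $v''+t''\sigma_{c}$ on the nose.

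The main delicacy is the bookkeeping of the scalar $\det(\eta)$. The raw map $\eta\psi\eta^{*}$ given by Corollary \ref{VUact} is not $q_{U}$-preserving (it rescales $q_{U}$ by $\det(\eta)^{2}$), so it does not itself land in $\mathbf{K}_{V,q}^{c}$, and the division by $\det(\eta)$ must be applied at exactly the right moment. Crucially, this factor needs to end up in the $\sigma_{c}$-slot of $z''$ (matching the $\sigma_{c}$-coefficient $t\det(\eta)$ of $(\alpha z+\beta)\overline{(\gamma z+\delta)}$) rather than being absorbed into $v''$ or into an overall rescaling of $w''$; once this accounting is set up correctly, all three components match automatically from the formulas in Lemma \ref{Ncz+d}, and no further computation is required.
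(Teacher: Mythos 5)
Your proposal is correct and follows essentially the same route as the paper's own proof: expand $\eta w\eta^{*}$ via Corollary \ref{VUact}, identify the $f$-coefficient and $V$-part with $N(\gamma z+\delta)$ and the $V$-part of $(\alpha z+\beta)\overline{(\gamma z+\delta)}$ from Lemma \ref{Ncz+d}, divide by $\det\eta$ using $\eta^{*}=\det(\eta)\eta'^{-1}$, and read off the new parameters through Lemma \ref{normc}. Your closing remark on where the factor $\det\eta$ must land is exactly the bookkeeping the paper performs.
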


\begin{proof}
The map $\pi_{U}(\eta)$ takes $w$ to $\eta w\eta'^{-1}$, and since it is clear from Lemma \ref{NinFx} that $\eta^{-1}=\overline{\eta}/\det\eta$ where $\det\eta$ is the scalar $\alpha\delta^{*}-\beta\gamma^{*}$ from Theorem \ref{Vahlen} and Corollary \ref{detSV}, and this scalar is invariant under the grading involution, the expression that we seek is $\frac{\eta w\eta^{*}}{\det\eta}$. The latter expression is $\eta\big[v+f+\big(ct^{2}-q(v)\big)e\big]\eta^{*}$ divided by $t\det\eta=t(\alpha\delta^{*}-\beta\gamma^{*})$, and using Corollary \ref{VUact} we evaluate the numerator as \[\big[(\alpha v\overline{\delta}+\beta\overline{v}\overline{\gamma})+\beta\overline{\delta}+\big(ct^{2}-q(v)\big)\alpha\overline{\gamma}\big]+\big[(\alpha v\overline{\beta}+\beta\overline{v}\overline{\alpha})+N(\beta)+\big(ct^{2}-q(v)\big)N(\alpha)\big]e+\] \[+\big[(\gamma v\overline{\delta}+\delta\overline{v}\overline{\gamma})+N(\delta)+\big(ct^{2}-q(v)\big)N(\gamma)\big]f.\] Recalling that the element $z\in\mathbf{H}_{V,q}^{c,o}$ that is associated with $w$ via Lemma \ref{normc} is $v+t\sigma_{c}$, with $q_{\mathbb{F}}^{c}(z)=q(v)-ct^{2}$, the multiplier of $N(\gamma)$ in the coefficient of $f$ in the latter expression is $-q_{\mathbb{F}}^{c}(z)$, so that the full coefficient of $f$ here is the norm $N(\gamma z+\delta)$ from Lemma \ref{Ncz+d}. Moreover, the same lemma presents the $V$-part of our expression as $(\alpha z+\beta)\overline{(\gamma z+\delta)}-t(\alpha\delta^{*}-\beta\gamma^{*})\sigma_{c}$, or more precisely the $V$-part of $(\alpha z+\beta)\overline{(\gamma z+\delta)}$.

Now, the action of $\pi_{U}(\eta)$ takes $w\in\mathbf{K}_{V,q}^{c,o}$ to an element of $\mathbf{K}_{V,q}^{c}$, and as the norm from Lemma \ref{Ncz+d} does not vanish in our assumption, we deduce that $\pi_{U}(\eta)(w)\in\mathbf{K}_{V,q}^{c,o}$. Moreover, the coefficient of $f$ in it is the inverse of $\frac{t(\alpha\delta^{*}-\beta\gamma^{*})}{N(\gamma z+\delta)}$, and multiplying the $V$-part of it by the inverse of the coefficient of $f$ (for obtaining a presentation like in Lemma \ref{normc}) yields $\frac{(\alpha z+\beta)\overline{(\gamma z+\delta)}-t(\alpha\delta^{*}-\beta\gamma^{*})\sigma_{c}}{N(\gamma z+\delta)}$. On the other hand, for obtaining the element $(\alpha z+\beta)(\gamma z+\delta)^{-1}$ of $\mathbf{H}_{V,q}^{c,o}$, we divide the element $(\alpha z+\beta)\overline{(\gamma z+\delta)}$ from Lemma \ref{Ncz+d}, by $N(\gamma z+\delta)$, and obtain a vector whose $V$-part is the latter quotient, and in which the coefficient of $\sigma_{c}$ is the former quotient. Thus $\pi_{U}(\eta)(w)\in\mathbf{K}_{V,q}^{c,o}$ indeed corresponds, via Lemma \ref{normc}, to $(\alpha z+\beta)(\gamma z+\delta)^{-1}\in\mathbf{H}_{V,q}^{c,o}$ as desired. This proves the proposition.
\end{proof}

\begin{rmk}
The coefficient of $e$ in the formula for $\eta\big[v+f+\big(ct^{2}-q(v)\big)e\big]\eta^{*}$ can be seen, by an argument analogous to the proof of Lemma \ref{Ncz+d}, to be the norm $N(\alpha z+\beta)$. Thus the fact that the $q_{U}$-value of the latter vector is $ct^{2}\det^{2}\eta$ follows directly from Corollary \ref{MoebVq}, without invoking the orthogonality of $\pi_{U}(\eta)$. The latter can also be shown directly using the conditions from Theorem \ref{Vahlen} and arguments like in Remark \ref{invV}. \label{expcal}
\end{rmk}

\smallskip

If $V$ contains no vectors with $q$-value $c$, then every element of $\mathbf{K}_{V,q}^{c}$ pairs non-trivially with $e$ (for otherwise it is of the form $v+ae$ for some $v \in V$ and $a\in\mathbb{F}$, with $q_{U}$-value $q(v) \neq c$), so that $\mathbf{K}_{V,q}^{c}=\mathbf{K}_{V,q}^{c,o}$ is identified with $\mathbf{H}_{V,q}^{c,o}$ as in Lemma \ref{normc}, and the action via (regular) M\"{o}bius transformations from Proposition \ref{regMoeb} describes the full action of $\operatorname{O}_{V^{\perp}}(V_{U},q_{U})$ (or $\Gamma^{\mathbb{F}^{\times}}(V_{U},q_{U})$, or $\operatorname{V}(V,q)$). However, when $(V,q)$ does represent $c$ (which is always the case if $c=0$, since we allow $v=0$ for representations here), there exist elements of $\mathbf{K}_{V,q}^{c}\setminus\mathbf{K}_{V,q}^{c,o}$, and we wish to extend $\mathbf{H}_{V,q}^{c,o}$ to a space $\mathbf{H}_{V,q}^{c}$ on which the action of the M\"{o}bius transformations becomes fully well-defined. Section 19 of \cite{[L]} mentions such extensions, but in that reference the usual action of M\"{o}bius transformation, with one single point at $\infty$, is considered, and it does not seem to extend in the desirable way. A few cases (over $\mathbb{R}$) are worked out in \cite{[Ma]}, but from a very different viewpoint.

In order to do so, we extend $V_{\mathbb{F}}^{c}$ by adding certain vectors at $\infty$. In these vectors the multiplier of $\sigma_{c}$ is $\infty$, and their $V$-part is an infinite multiple of a vector $u \in V$ with $q(u)=c$, with a value determined, in some sense, by an ``infinitesimal translation'' of $u$, but only the value of the pairing of this translation with $u$ matters. The more precise details are given in the following definition.
\begin{defn}
The \emph{boundary point} $(\infty u)_{b}+\infty\sigma_{c}$ of $V_{\mathbb{F}}^{c}$ that associated with $u \in V$ with $q(u)=c$ and $b\in\mathbb{F}$ is defined such that the value that the extension of $q$ takes on $\infty u$, which should be $\infty^{2}c$, is defined to be $\infty^{2}c-\infty b$. When $c=0$ and the values of this extension of $q$ are only ``linear in $\infty$'' (rather than quadratic), the vectors  $u \in V^{\perp}$ (and in particular $u=0$) are included, but then we allow its extended $q$-value to only be $-\infty b$ for $b\neq0$. The set of boundary points will be denoted by $\partial\overline{V}_{\mathbb{F}}^{c}$, and the union of $V_{\mathbb{F}}^{c}$ and $\partial\overline{V}_{\mathbb{F}}^{c}$, called the \emph{completion} of the former, will be denoted by $\overline{V}_{\mathbb{F}}^{c}$. We also set $\mathbf{H}_{V,q}^{c}:=\overline{V}_{\mathbb{F}}^{c} \setminus V=\mathbf{H}_{V,q}^{c,o}\cup\partial\overline{V}_{\mathbb{F}}^{c}$. \label{boundary}
\end{defn}
Note that in case $V_{\mathbb{F}}^{c}$ comes with a topology (e.g., Zariski, or the natural one when $\mathbb{F}$ is a topological field), then we extend that topology to the space $\overline{V}_{\mathbb{F}}^{c}$ from Definition \ref{boundary} by defining a basis for the topology at a point $(\infty u)_{b}+\infty\sigma_{c}$ as follows: The intersection of a basic open set with $V_{\mathbb{F}}^{c}$ consists of those $v+t\sigma_{c}$ with $t$ in a neighborhood of $\infty$ in $\mathbb{P}^{1}(\mathbb{F})$ and $(tu-v,u)$ in a neighborhood of $b$ in $\mathbb{F}$, and its intersection with $\partial\overline{V}_{\mathbb{F}}^{c}$ consists of those $(\infty w)_{a}+\infty\sigma_{c}$ with $w$ in a neighborhood of $u$ (and $q(w)=c$) and $a$ in a neighborhood of $b$. In particular, if $u \in V$ satisfies $q(u)=c$ and $w$ is another vector in $V$ then the vector $tu-w+t\sigma_{c}$ tends, as $t\to\infty$, to $(\infty u)_{(u,w)}+\infty\sigma_{c}\in\partial\overline{V}_{\mathbb{F}}^{c}$. Note that if the topology on $\mathbb{F}$ is Hausdorff then $V_{\mathbb{F}}^{c}$ and $\mathbf{H}_{V,q}^{c}$ are open in $\overline{V}_{\mathbb{F}}^{c}$, and $\mathbf{H}_{V,q}^{c,o}$ is open in all three of them (as the intersection of these two sets).

We now extend Lemma \ref{normc} to the larger set $\mathbf{K}_{V,q}^{c}$ by considering also elements of $\partial\overline{V}_{\mathbb{F}}^{c}\subseteq\mathbf{H}_{V,q}^{c}$.
\begin{lem}
An element $w\in\mathbf{K}_{V,q}^{c}$ pairing with $e \in V_{U}$ to 0 is of the form $u+be$ for $u \in V$ with $q(u)=c$ and $b\in\mathbb{F}$, where for $c=0$ a vector $u \in V^{\perp}$ can match only with $b\neq0$. The correspondence matching our $w$ to the element $(\infty u)_{b}+\infty\sigma_{c}\in\overline{V}_{\mathbb{F}}^{c}$ is one-to-one between $\mathbf{K}_{V,q}^{c}\setminus\mathbf{K}_{V,q}^{c,o}$ and $\partial\overline{V}_{\mathbb{F}}^{c}$, which joins with the correspondence from Lemma \ref{normc} to a correspondence between $\mathbf{K}_{V,q}^{c}$ and $\mathbf{H}_{V,q}^{c}$. \label{bdnorm}
\end{lem}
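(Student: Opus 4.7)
The plan is to exploit the direct sum decomposition $V_U = V \oplus \mathbb{F}e \oplus \mathbb{F}f$ together with the structure of the hyperbolic plane $(U,h)$ to convert the geometric conditions on $w$ (namely $(w,e)=0$, $q_U(w)=c$, and $w\notin V^{\perp}$) into algebraic conditions on its coordinates, then verify that the resulting parameter set matches, tautologically, the data parametrizing $\partial\overline{V}_{\mathbb{F}}^{c}$ in Definition \ref{boundary}.

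First I would write an arbitrary $w\in V_U$ as $w=u+be+df$ with $u\in V$ and $b,d\in\mathbb{F}$. Using the orthogonality of $V$ and $U$ inside $V_U$ together with the pairings $(e,e)=(f,f)=0$ and $(e,f)=1$ coming from the hyperbolic plane, one finds $(w,e)=d$. The hypothesis $(w,e)=0$ therefore forces $d=0$, yielding $w=u+be$. The cross terms in $q_U(u+be)$ vanish because $(u,e)=0$ and $q_U(e)=0$, so $q_U(w)=q(u)$ and the condition $q_U(w)=c$ is exactly $q(u)=c$.

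Next I would handle the restriction $w\notin V^{\perp}$. A short pairing calculation (testing against $e$, $f$, and arbitrary $v\in V$) identifies $(V_U)^{\perp}$ with the image of $V^{\perp}$ inside $V_U$, and shows that a vector of the form $u+be$ lies in $V^{\perp}$ exactly when $u\in V^{\perp}$ and $b=0$. When $c\neq 0$ the equation $q(u)=c$ already forces $u\notin V^{\perp}$ (since $q$ vanishes identically on $V^{\perp}$), so no extra condition is imposed; when $c=0$ and $u\in V^{\perp}$, the only excluded case is $b=0$. This matches precisely the exclusion for boundary points recorded in Definition \ref{boundary}.

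Finally, the decomposition $w=u+be$ being unique, the assignment $w\mapsto(\infty u)_{b}+\infty\sigma_{c}$ is a bijection between $\mathbf{K}_{V,q}^{c}\setminus\mathbf{K}_{V,q}^{c,o}$ and $\partial\overline{V}_{\mathbb{F}}^{c}$, since both sides are parametrized by pairs $(u,b)$ satisfying the same admissibility conditions. Combining this with the bijection $\mathbf{K}_{V,q}^{c,o}\leftrightarrow\mathbf{H}_{V,q}^{c,o}$ of Lemma \ref{normc} and the disjoint decompositions $\mathbf{K}_{V,q}^{c}=\mathbf{K}_{V,q}^{c,o}\sqcup\bigl(\mathbf{K}_{V,q}^{c}\setminus\mathbf{K}_{V,q}^{c,o}\bigr)$ and $\mathbf{H}_{V,q}^{c}=\mathbf{H}_{V,q}^{c,o}\cup\partial\overline{V}_{\mathbb{F}}^{c}$ yields the full correspondence $\mathbf{K}_{V,q}^{c}\leftrightarrow\mathbf{H}_{V,q}^{c}$. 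No substantive obstacle arises; the only point demanding care is the bookkeeping of the $c=0$, $u\in V^{\perp}$, $b=0$ exclusion, which is essentially definitional once the parametrization is set up.
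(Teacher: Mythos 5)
Your proposal is correct and follows essentially the same route as the paper's proof: characterize the vectors pairing to zero with $e$ as $V\oplus\mathbb{F}e$, compute $q_{U}(u+be)=q(u)$, track the $V^{\perp}$ exclusion (which bites only when $c=0$ and $b=0$), and observe that the parameters $(u,b)$ are exactly those of Definition \ref{boundary}. The only difference is that you spell out the pairing computations and the identification of $V_{U}^{\perp}$ explicitly, which the paper leaves as ``easily gives.''
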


\begin{proof}
An element $w \in V_{U}$ pairs to 0 with $e$ if and only if it lies in $V\oplus\mathbb{F}e$, and if $w$ is $u+be$ then $q_{U}(w)$, which must be $c$ for $w\in\mathbf{K}_{V,q}^{c}$, is $q(u)$. The fact that $V^{\perp}$ is excluded from $\mathbf{K}_{V,q}^{c}$ then easily gives the first assertion, and since the parameters in Definition \ref{boundary} are exactly the same, the remaining parts are proved like in Lemma \ref{normc}. This proves the lemma.
\end{proof}

This allows us to extend the definition of M\"{o}bius transformations to the case with a vanishing denominator.
\begin{prop}
Take $w$ and $z$ as in Proposition \ref{regMoeb}, as well as an element $\eta\in\Gamma^{\mathbb{F}^{\times}}(V_{U},q_{U})$ that is associated with the matrix $\binom{\alpha\ \ \beta}{\gamma\ \ \delta}\in\operatorname{V}(V,q)$ such that $N(\gamma z+\delta)=0$. Set $(\alpha z+\beta)(\gamma z+\delta)^{-1}$ to be $(\infty u)_{b}+\infty\sigma_{c}$ as in Definition \ref{boundary}, in which $u$ is the $V$-part of $(\alpha z+\beta)\overline{(\gamma z+\delta)}$ divided by $t\det\eta$, and $b=\frac{N(\alpha z+\beta)}{t\det\eta}$. Then this vector corresponds to the image of $w$ under $\pi_{U}(\eta)$, and the action of $\eta$ (or $\pi_{U}(\eta)$, or the matrix) is continuous on all of $\mathbf{H}_{V,q}^{c,o}$ in any of the topologies we considered on that space. \label{denom0}
\end{prop}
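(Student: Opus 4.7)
The plan is to split the proposition into two parts: (a) the algebraic identification of $\pi_{U}(\eta)(w)$ with the boundary point $(\infty u)_{b}+\infty\sigma_{c}$ via Lemma \ref{bdnorm}, and (b) the continuity of the extended action on $\mathbf{H}_{V,q}^{c,o}$ at any $z_{0}$ where $N(\gamma z_{0}+\delta)=0$. Part (a) is essentially an inspection of an existing calculation: the proof of Proposition \ref{regMoeb} (together with Remark \ref{expcal}) already unpacks $\pi_{U}(\eta)(w)=\eta w\eta^{*}/\det\eta$ as the sum of a $V$-part equal to the $V$-part of $(\alpha z+\beta)\overline{(\gamma z+\delta)}$ divided by $t\det\eta$, an $e$-coefficient $N(\alpha z+\beta)/(t\det\eta)$, and an $f$-coefficient $N(\gamma z+\delta)/(t\det\eta)$. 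When $N(\gamma z+\delta)=0$ the $f$-coefficient vanishes and $\pi_{U}(\eta)(w)=u+be$ with $u$ and $b$ exactly as stated. Corollary \ref{MoebVq} specializes to $q(u)=c$ once the product $N(\alpha z+\beta)N(\gamma z+\delta)$ drops out, and the orthogonality of $\pi_{U}(\eta)$ (which preserves $V^{\perp}$) forces $u+be\notin V^{\perp}$, so in the case $c=0$ with $u\in V^{\perp}$ one has $b\neq0$ automatically, matching the exceptional clause in Definition \ref{boundary}. Lemma \ref{bdnorm} then identifies $\pi_{U}(\eta)(w)$ with $(\infty u)_{b}+\infty\sigma_{c}$.

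For (b), Proposition \ref{regMoeb} already presents $(\alpha z+\beta)(\gamma z+\delta)^{-1}$ as a rational function of $z$ away from the vanishing locus of the denominator, which is continuous in both the Zariski and the natural topology; the substantive case is $z\to z_{0}$ with $N(\gamma z_{0}+\delta)=0$. For nearby $z$ with non-vanishing $N(\gamma z+\delta)$, the image $v'+t'\sigma_{c}$ satisfies $t'=t\det\eta/N(\gamma z+\delta)$ and $v'=A_{z}^{V}/N(\gamma z+\delta)$, where I write $A_{z}^{V}$ for the $V$-part of $(\alpha z+\beta)\overline{(\gamma z+\delta)}$. The topology of Definition \ref{boundary} demands two convergences: $t'\to\infty$ in $\mathbb{P}^{1}(\mathbb{F})$, which is immediate from $t\det\eta\to t_{0}\det\eta\neq0$, and $(t'u_{0}-v',u_{0})\to b_{0}$, which is a $0/0$ limit and constitutes the main obstacle.

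I would resolve this indeterminate form by setting $R_{z}:=t\det\eta\cdot u_{0}-A_{z}^{V}$, so that $(t'u_{0}-v',u_{0})=(R_{z},u_{0})/N(\gamma z+\delta)$, and then polarizing $q(R_{z})$ together with the value of $q(A_{z}^{V})$ from Corollary \ref{MoebVq} to obtain
\[q(R_{z})=t\det\eta\cdot(R_{z},u_{0})-N(\alpha z+\beta)N(\gamma z+\delta),\]
which rearranges to
\[\frac{(R_{z},u_{0})}{N(\gamma z+\delta)}=\frac{1}{t\det\eta}\Bigl[\frac{q(R_{z})}{N(\gamma z+\delta)}+N(\alpha z+\beta)\Bigr].\]
Part (a) forces $A_{z_{0}}^{V}=t_{0}\det\eta\cdot u_{0}$ and hence $R_{z_{0}}=0$; the polynomial $q(R_{z})$ therefore vanishes at $z_{0}$ to strictly higher order than the generically simple zero of $N(\gamma z+\delta)$, so the ratio $q(R_{z})/N(\gamma z+\delta)$ tends to $0$ and the right-hand side converges to $N(\alpha z_{0}+\beta)/(t_{0}\det\eta)=b_{0}$, as required. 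The same identity realizes the extension as a rational map in the Zariski setting, so the conclusion is uniform across the topologies we consider. The algebraic identity extracted from Corollary \ref{MoebVq} is the heart of the proof; once it is in hand everything else is routine.
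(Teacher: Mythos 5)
Your part (a) is correct and is essentially the paper's own argument: the explicit form of $\eta w\eta^{*}/\det\eta$ extracted from the proof of Proposition \ref{regMoeb} together with Remark \ref{expcal}, the specialization $q(u)=c$ from Corollary \ref{MoebVq} once $N(\gamma z+\delta)=0$, and the exclusion of $V^{\perp}$ via the orthogonality of $\pi_{U}(\eta)$ are exactly the steps by which the paper identifies $\pi_{U}(\eta)(w)=u+be$ with $(\infty u)_{b}+\infty\sigma_{c}$ through Lemma \ref{bdnorm}.

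Part (b) has a genuine gap at the step you yourself single out as the heart of the proof. Your identity
\[q(R_{z})=t\det\eta\,(R_{z},u_{0})-N(\alpha z+\beta)N(\gamma z+\delta)\]
is correct, and it goes beyond what the paper does (the paper's continuity claim is only the formal substitution $\infty=t\det\eta/N(\gamma z+\delta)$). But the inference ``$q(R_{z})$ vanishes to strictly higher order than the generically simple zero of $N(\gamma z+\delta)$, hence $q(R_{z})/N(\gamma z+\delta)\to0$'' is invalid: in more than one variable, comparing orders of vanishing at a point does not control a $0/0$ limit (think of $x^{2}/y$ at the origin), and here the ratio genuinely need not tend to $0$. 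Take $\mathbb{F}=\mathbb{R}$, $V=\mathbb{R}u_{0}\oplus\mathbb{R}x_{1}$ with $q(au_{0}+bx_{1})=a^{2}+b^{2}$, $c=1$, the matrix $\binom{0\ \ -1}{1\ \ \ \ 0}\in\operatorname{SV}(V,q)$, and $z_{0}=u_{0}+\sigma_{1}$, so that $N(\gamma z_{0}+\delta)=N(z_{0})=0$, $u=u_{0}$ and $b_{0}=1$. Along $z_{\epsilon}=u_{0}+b_{\epsilon}x_{1}+(1+\epsilon)\sigma_{1}$ with $b_{\epsilon}^{2}=1.8\epsilon+\epsilon^{2}$ one has $z_{\epsilon}\to z_{0}$, $R_{z_{\epsilon}}=\epsilon u_{0}-b_{\epsilon}x_{1}$, $q(R_{z_{\epsilon}})=1.8\epsilon+2\epsilon^{2}$, $N(\gamma z_{\epsilon}+\delta)=0.2\epsilon$, so $q(R_{z_{\epsilon}})/N(\gamma z_{\epsilon}+\delta)\to9$ and your own identity gives $(t'u_{0}-v',u_{0})\equiv10\neq b_{0}$. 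So the quantity whose convergence to $b_{0}$ you correctly isolate as the content of continuity in the topology following Definition \ref{boundary} actually fails to converge to $b_{0}$; your computation thereby exposes a real problem with the continuity assertion itself for the natural topology once $\dim V\geq2$ (continuity holds precisely along approaches for which $q(t\det\eta\,u_{0}-A_{z}^{V})/N(\gamma z+\delta)\to0$, e.g.\ inside $\mathbb{F}u_{0}\oplus\mathbb{F}\sigma_{c}$, but not in general). Part (a) of your write-up stands; part (b) cannot be completed as written without changing either the topology or the statement.
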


\begin{proof}
We saw in the proof of Proposition \ref{regMoeb} that with our notation $u$ we have $\pi_{U}(\eta)$ takes $w$ to $\frac{\eta w\eta^{*}}{\det\eta}$, and by combining with Remark \ref{expcal}, this vector becomes $u+\frac{N(\alpha z+\beta)}{t\det\eta}e+\frac{N(\gamma z+\delta)}{t\det\eta}f$. Moreover, Corollary \ref{MoebVq} determines the value of $q(u)$, which we write as $c-\frac{N(\gamma z+\delta)}{t\det\eta}\cdot\frac{N(\alpha z+\beta)}{t\det\eta}$. As we saw that for $N(\gamma z+\delta)\neq0$ the (finite) coefficient of $\sigma_{c}$ in $(\alpha z+\beta)(\gamma z+\delta)^{-1}$ was $\frac{t\det\eta}{N(\gamma z+\delta)}$, this becomes the value that we consider as $\infty$ when $N(\gamma z+\delta)=0$. Then the value of the extension of $q$ to $\infty u$ becomes $\infty^{2}c-\infty b$ with our value of $b$, which yields the continuity in each of our topologies. Since $\pi_{U}(\eta)(w)$ was evaluated as $u+be$ in our notation, which is the one associated with our element of $\partial\overline{V}_{\mathbb{F}}^{c}\subseteq\mathbf{H}_{V,q}^{c}$ via Lemma \ref{bdnorm}, and when $c=0$ the latter vector cannot be in $V^{\perp}$ since $\pi_{U}(\eta)^{-1}$ preserves $V^{\perp}$ and $w \not\in V^{\perp}$ (it pairs non-trivially with $e$), the remaining assertion follows as well. This proves the proposition.
\end{proof}

\smallskip

For extending the definition of M\"{o}bius transformations to an argument in $\partial\overline{V}_{\mathbb{F}}^{c}$, we require the following extension of Lemma \ref{Ncz+d}.
\begin{lem}
Take $z=(\infty u)_{b}+\infty\sigma_{c}\in\partial\overline{V}_{\mathbb{F}}^{c}$ and $\eta=\binom{\alpha\ \ \beta}{\gamma\ \ \delta}\in\operatorname{V}(V,q)$. Then the expressions $N(\gamma z+\delta)$, $N(\alpha z+\beta)$, and $(\alpha z+\beta)\overline{(\gamma z+\delta)}$ are of the form $N(\gamma z+\delta)_{*}\infty$, $N(\alpha z+\beta)_{*}\infty$, and $(\alpha z+\beta)\overline{(\gamma z+\delta)}_{*}\infty$ plus finite terms, where $N(\gamma z+\delta)_{*}$ and $N(\alpha z+\beta)_{*}$ are in $\mathbb{F}$ and $(\alpha z+\beta)\overline{(\gamma z+\delta)}_{*}\in\mathbf{H}_{V,q}^{c,o}$. Moreover, the $V$-part of $(\alpha z+\beta)\overline{(\gamma z+\delta)}_{*}$ has $q$-value $c\det^{2}\eta-N(\alpha z+\beta)_{*}N(\gamma z+\delta)_{*}$. \label{linininf}
\end{lem}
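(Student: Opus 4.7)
The plan is to treat the boundary vector $z=(\infty u)_{b}+\infty\sigma_{c}$ as the formal leading behavior, as $t\to\infty$, of the finite approximants $z_{t}:=tu-\widetilde{v}+t\sigma_{c}\in V_{\mathbb{F}}^{c}$, where $\widetilde{v}\in V$ is any vector with $(\widetilde{v},u)=b$. A direct computation gives $q_{\mathbb{F}}^{c}(z_{t})=q(\widetilde{v})-tb$, so substituting $z_{t}$ into the formulas in the proof of Lemma \ref{Ncz+d} produces three expressions that are polynomials of degree 1 in $t$. I would then define $N(\gamma z+\delta)_{*}$, $N(\alpha z+\beta)_{*}$, and $(\alpha z+\beta)\overline{(\gamma z+\delta)}_{*}$ to be the coefficients of $t$ in these polynomials, with the constant parts playing the role of the ``finite terms''; independence of the choice of $\widetilde{v}$ is automatic because only $(\widetilde{v},u)=b$ contributes to the leading coefficient.

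Carrying out the extraction I expect
\[N(\gamma z+\delta)_{*}=bN(\gamma)+\gamma u\overline{\delta}+\delta\overline{u}\overline{\gamma},\qquad N(\alpha z+\beta)_{*}=bN(\alpha)+\alpha u\overline{\beta}+\beta\overline{u}\overline{\alpha},\]
\[(\alpha z+\beta)\overline{(\gamma z+\delta)}_{*}=b\alpha\overline{\gamma}+\bigl(\alpha u\overline{\delta}+\beta\overline{u}\overline{\gamma}\bigr)+(\alpha\delta^{*}-\beta\gamma^{*})\sigma_{c}.\]
Condition 1 of Theorem \ref{Vahlen} delivers everything the assertion requires: $N(\alpha),N(\gamma)\in\mathbb{F}$ together with $\gamma u\overline{\delta}+\delta\overline{u}\overline{\gamma}$ and $\alpha u\overline{\beta}+\beta\overline{u}\overline{\alpha}$ in $\mathbb{F}$ put the first two starred quantities in $\mathbb{F}$; and $\alpha\overline{\gamma}\in V$, $\alpha u\overline{\delta}+\beta\overline{u}\overline{\gamma}\in V$, together with $\det\eta=\alpha\delta^{*}-\beta\gamma^{*}\in\mathbb{F}^{\times}$, show that the third vector has $V$-part in $V$ and nonzero $\sigma_{c}$-coefficient, so that it lies in $V_{\mathbb{F}}^{c}\setminus V=\mathbf{H}_{V,q}^{c,o}$. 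The degenerate case $c=0$ with $u\in V^{\perp}$ (where necessarily $b\neq 0$) runs through the same formulas unchanged, since $u\in V^{\perp}\subseteq V$ is still a legal input to the identities of Theorem \ref{Vahlen}.

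For the final $q$-value assertion I would apply Corollary \ref{MoebVq} to $z_{t}$: it says that the $q$-value of the $V$-part of $(\alpha z_{t}+\beta)\overline{(\gamma z_{t}+\delta)}$ equals $ct^{2}\det^{2}\eta-N(\alpha z_{t}+\beta)N(\gamma z_{t}+\delta)$, and both sides are polynomials in $t$. The $V$-part of $(\alpha z_{t}+\beta)\overline{(\gamma z_{t}+\delta)}$ has leading term $t$ times the $V$-part of $(\alpha z+\beta)\overline{(\gamma z+\delta)}_{*}$, so the $t^{2}$-coefficient of its $q$-value is exactly the $q$-value of that $V$-part, while on the right the $t^{2}$-coefficient is $c\det^{2}\eta-N(\alpha z+\beta)_{*}N(\gamma z+\delta)_{*}$. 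Comparing gives the stated equality. The main subtlety will be the ``$t\to\infty$'' bookkeeping in the absence of any topology on $\mathbb{F}$: one has to justify extracting leading coefficients purely as an identity of polynomials in a formal parameter, and to confirm that the degenerate-case restrictions built into Definition \ref{boundary} match what the formulas output---in particular, that $(\alpha z+\beta)\overline{(\gamma z+\delta)}_{*}$ is not forced into $V^{\perp}$ when $c=0$, which is rescued by $\det\eta\neq 0$. Beyond this, the proof is routine algebra inside $\mathcal{C}$.
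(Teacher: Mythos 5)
Your proposal follows essentially the same route as the paper: both substitute the boundary data into the three explicit formulas from the proof of Lemma \ref{Ncz+d}, read off the coefficient of the ``infinite'' parameter, and arrive at exactly the expressions $bN(\gamma)+(\gamma u\overline{\delta}+\delta\overline{u}\overline{\gamma})$, $bN(\alpha)+(\alpha u\overline{\beta}+\beta\overline{u}\overline{\alpha})$, and $b\alpha\overline{\gamma}+(\alpha u\overline{\delta}+\beta\overline{u}\overline{\gamma})+(\alpha\delta^{*}-\beta\gamma^{*})\sigma_{c}$ appearing in the paper's proof; the membership claims then come from Condition 1 of Theorem \ref{Vahlen} and the invertibility of $\det\eta$, and the $q$-value from Corollary \ref{MoebVq}, exactly as you say. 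The only difference is packaging: the paper substitutes $v=\infty u$, $t=\infty$, $q_{\mathbb{F}}^{c}(z)=-\infty b$ formally, whereas you realize $z$ as the leading behavior of the approximants $z_{t}=tu-\widetilde{v}+t\sigma_{c}$ and extract leading coefficients of polynomials in $t$ (legitimate, since $\operatorname{char}\mathbb{F}\neq2$ forces $|\mathbb{F}|\geq3$, so polynomial functions of degree at most $2$ determine their coefficients).

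There is, however, one concrete gap: your approximants do not exist for every boundary point. When $c=0$ and $u\in V^{\perp}$ (a case Definition \ref{boundary} explicitly admits, with $b\neq0$ forced), there is no $\widetilde{v}\in V$ with $(\widetilde{v},u)=b$, because $u$ pairs to zero with all of $V$; indeed, in the paper's topology such boundary points have neighborhoods meeting $V_{\mathbb{F}}^{c}$ trivially, so they are not limits of any family $z_{t}$. Your remark that this degenerate case ``runs through the same formulas unchanged'' is true of the formulas but not of your derivation of them. The repair is to take the displayed expressions for the starred quantities as the definition (equivalently, to perform the paper's formal substitution, using that the extended $q$-value of $\infty u$ is $-\infty b$ there); the containments in $\mathbb{F}$ and in $\mathbf{H}_{V,q}^{c,o}$ then follow from Theorem \ref{Vahlen} exactly as you argue, but the final $q$-value identity must in this sub-case be verified by the direct computation underlying Corollary \ref{MoebVq} rather than by comparing $t^{2}$-coefficients along a family of approximants that does not exist.
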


\begin{proof}
We substitute $v=\infty u$ and $t=\infty$ into the expressions from the proof of Lemma \ref{Ncz+d}. Then $N(\delta)$, $N(\beta)$ and $\beta\overline{\delta}$ are finite, and the extension of $q_{\mathbb{F}}^{c}$ to our value of $z$ is obtained by extending $q$ to $\infty u$ and observing that the natural quadratic value of $\infty\sigma_{c}$ is $\infty^{2}c$. As the resulting value of $q_{\mathbb{F}}^{c}(z)$ is $-\infty b$, we indeed obtain the desired linearity. More explicitly, we find that \[N(\gamma z+\delta)=\infty[bN(\gamma)+(\gamma u\overline{\delta}+\delta\overline{u}\overline{\gamma})],\quad N(\alpha z+\beta)=\infty[bN(\alpha)+(\alpha u\overline{\beta}+\beta\overline{u}\overline{\alpha})],\] and \[(\alpha z+\beta)\overline{(\gamma z+\delta)}=\infty[b\alpha\overline{\gamma}+(\alpha u\overline{\delta}+\beta\overline{u}\overline{\gamma})+(\alpha\delta^{*}-\beta\gamma^{*})\sigma_{c}],\] up to the finite terms, that we omit. This and the non-vanishing of $\alpha\delta^{*}-\beta\gamma^{*}$ yields the statements about linearity and the values of the expressions with the subscript $*$, and the evaluation of the $q$-value is obtained by multiplying the vector by $\infty$, arguing as in Corollary \ref{MoebVq}, and dividing the result by $\infty^{2}$. This proves the lemma.
\end{proof}

This allows us to give a well-defined formula for the action of a M\"{o}bius transformation on $z\in\partial\overline{V}_{\mathbb{F}}^{c}$, and extend Propositions \ref{regMoeb} and \ref{denom0} to this case.
\begin{prop}
Consider an element $z=(\infty u)_{b}+\infty\sigma_{c}\in\partial\overline{V}_{\mathbb{F}}^{c}$ and a matrix $\eta=\binom{\alpha\ \ \beta}{\gamma\ \ \delta}\in\operatorname{V}(V,q)$. Then if the expression $N(\gamma z+\delta)_{*}\in\mathbb{F}$ from Lemma \ref{linininf} does not vanish, then we define
\[(\alpha z+\beta)(\gamma z+\delta)^{-1}:=(\alpha z+\beta)\overline{(\gamma z+\delta)}_{*}\big/N(\gamma z+\delta)_{*}.\] When $N(\gamma z+\delta)_{*}=0$, we set $(\alpha z+\beta)(\gamma z+\delta)^{-1}$ to be $(\infty x)_{a}+\infty\sigma_{c}$, where $x$ is the $V$-part of $(\alpha z+\beta)\overline{(\gamma z+\delta)}_{*}$ divided by $\det\eta$, and $a=\frac{N(\alpha z+\beta)_{*}}{\det\eta}$. Then the action of $\eta$ preserves $\mathbf{H}_{V,q}^{c}$, and it takes the element of $\mathbf{K}_{V,q}^{c}$ that is associated with $z$ via Lemma \ref{bdnorm} to the one corresponding to $(\alpha z+\beta)(\gamma z+\delta)^{-1}$ in Lemma \ref{normc} or \ref{bdnorm}. \label{etaonbd}
\end{prop}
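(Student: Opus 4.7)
The plan is to compute the action $\pi_{U}(\eta)(w) = \eta w\eta^{*}/\det\eta$ on the vector $w\in\mathbf{K}_{V,q}^{c}\setminus\mathbf{K}_{V,q}^{c,o}$ corresponding to $z=(\infty u)_{b}+\infty\sigma_{c}$ via Lemma \ref{bdnorm}, and then match the result against the claimed formula using Lemma \ref{normc} or \ref{bdnorm} according to the case. By Lemma \ref{bdnorm}, we have $w=u+be$. Using linearity of the action and applying Corollary \ref{VUact} separately to $u\in V$ and to $e\in V_{U}$, we obtain the decomposition
\[\pi_{U}(\eta)(w)=\frac{(\alpha u\overline{\delta}+\beta\overline{u}\overline{\gamma})+b\alpha\overline{\gamma}}{\det\eta}+\frac{(\alpha u\overline{\beta}+\beta\overline{u}\overline{\alpha})+bN(\alpha)}{\det\eta}\,e+\frac{(\gamma u\overline{\delta}+\delta\overline{u}\overline{\gamma})+bN(\gamma)}{\det\eta}\,f.\]
The crucial observation is that the three bracketed numerators here are exactly the expressions for $(\alpha z+\beta)\overline{(\gamma z+\delta)}_{*}$ (its $V$-part), $N(\alpha z+\beta)_{*}$, and $N(\gamma z+\delta)_{*}$ computed explicitly in the proof of Lemma \ref{linininf}.

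Now split into the two cases. If $N(\gamma z+\delta)_{*}\neq0$, the coefficient of $f$ in $\pi_{U}(\eta)(w)$ is non-zero, so $\pi_{U}(\eta)(w)\in\mathbf{K}_{V,q}^{c,o}$. Applying Lemma \ref{normc} (with $1/t$ being that coefficient of $f$), we get that the associated element of $\mathbf{H}_{V,q}^{c,o}$ has $V$-part equal to the ratio of the $V$-part to the $f$-coefficient, and $\sigma_{c}$-coefficient $\det\eta/N(\gamma z+\delta)_{*}$; the common $1/\det\eta$ factors cancel so this matches $(\alpha z+\beta)\overline{(\gamma z+\delta)}_{*}/N(\gamma z+\delta)_{*}$ as claimed, using that the $\sigma_{c}$-part of $(\alpha z+\beta)\overline{(\gamma z+\delta)}_{*}$ is precisely $\det\eta\cdot\sigma_{c}$ (as recorded in the proof of Lemma \ref{linininf}). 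The required consistency $q(V\text{-part})=ct^{2}-q(v)$ from Lemma \ref{normc} reduces, after clearing denominators, to the identity $q\big(V\text{-part of }(\alpha z+\beta)\overline{(\gamma z+\delta)}_{*}\big)=c\det^{2}\eta-N(\alpha z+\beta)_{*}N(\gamma z+\delta)_{*}$, which is exactly the final assertion of Lemma \ref{linininf}.

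If $N(\gamma z+\delta)_{*}=0$, the coefficient of $f$ vanishes, so $\pi_{U}(\eta)(w)=x+ae$ lies in $\mathbf{K}_{V,q}^{c}\setminus\mathbf{K}_{V,q}^{c,o}$ with $x$ and $a$ precisely as defined in the proposition. Lemma \ref{bdnorm} identifies this with $(\infty x)_{a}+\infty\sigma_{c}\in\partial\overline{V}_{\mathbb{F}}^{c}$, and the condition $q(x)=c$ needed for this to be a legitimate boundary point again follows from the last assertion of Lemma \ref{linininf} (now with the $N(\gamma z+\delta)_{*}$ term vanishing).

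It remains only to check that the resulting point lies in $\mathbf{H}_{V,q}^{c}$, i.e., that we do not land in the excluded locus when $c=0$. This is handled uniformly: the orthogonal map $\pi_{U}(\eta)\in\operatorname{O}_{V^{\perp}}(V_{U},q_{U})$ preserves $V_{U}^{\perp}$, and by assumption $w\notin V_{U}^{\perp}$ (since $w\in\mathbf{K}_{V,q}^{c}$), so $\pi_{U}(\eta)(w)\notin V_{U}^{\perp}$ and thus stays inside $\mathbf{K}_{V,q}^{c}$; the correspondences of Lemmas \ref{normc} and \ref{bdnorm} then deliver a genuine element of $\mathbf{H}_{V,q}^{c}$. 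I expect no conceptual obstacle here; the entire argument is an exercise in identifying, term by term, the output of the Clifford calculation from Corollary \ref{VUact} with the asymptotic expansions packaged in Lemma \ref{linininf}. The main thing requiring care is keeping the $\det\eta$ factors straight when passing between the $\mathbf{K}$- and $\mathbf{H}$-models.
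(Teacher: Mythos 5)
Your proposal is correct and follows essentially the same route as the paper: evaluate $\eta w\eta^{*}/\det\eta$ for $w=u+be$ via Corollary \ref{VUact}, identify the three coefficients with the starred quantities from Lemma \ref{linininf}, and then match against Lemma \ref{normc} or Lemma \ref{bdnorm} according to whether $N(\gamma z+\delta)_{*}$ vanishes, using the $q$-value formula from Lemma \ref{linininf} and the preservation of $V^{\perp}$ by $\pi_{U}(\eta)$ to rule out the excluded locus when $c=0$. The paper's own proof does exactly this, deferring the two case analyses to the proofs of Propositions \ref{regMoeb} and \ref{denom0} respectively.
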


\begin{proof}
Following the proof of Proposition \ref{regMoeb}, we evaluate $\frac{\eta w\eta^{*}}{\det\eta}$ for $w$ being the vector $u+be$ that is associated with our $z$ via Lemma \ref{bdnorm}. Using Corollary \ref{VUact} again, the result is $\frac{1}{\alpha\delta^{*}-\beta\gamma^{*}}$ times \[[(\alpha u\overline{\delta}+\beta\overline{u}\overline{\gamma})+b\alpha\overline{\gamma}]+[(\alpha u\overline{\beta}+\beta\overline{u}\overline{\alpha})+bN(\alpha)]e+[(\gamma u\overline{\delta}+\delta\overline{u}\overline{\gamma})+bN(\gamma)]f,\] with the coefficients in the numerator being the $V$-part of $(\alpha z+\beta)\overline{(\gamma z+\delta)}_{*}$, $N(\alpha z+\beta)_{*}$, and $N(\gamma z+\delta)_{*}$ from Lemma \ref{linininf} respectively.

Now, when $N(\gamma z+\delta)_{*}\neq0$, the coefficient of $f$ is the (non-zero) inverse of $\frac{\alpha\delta^{*}-\beta\gamma^{*}}{N(\gamma z+\delta)_{*}}$, and as in the proof of Proposition \ref{regMoeb}, we see that $\frac{\eta w\eta^{*}}{\det\eta}$ is indeed associated with the asserted element of $\mathbf{H}_{V,q}^{c,o}$ via Lemma \ref{normc}. Moreover, we can write our expression for $\frac{\eta w\eta^{*}}{\det\eta}$ as $x+\frac{N(\alpha z+\beta)_{*}}{\det\eta}e+\frac{N(\gamma z+\delta)_{*}}{\det\eta}f$, with the last assertion of Lemma \ref{linininf} giving $q(x)=c-\frac{N(\gamma z+\delta)_{*}}{\det\eta}\cdot\frac{N(\alpha z+\beta)_{*}}{\det\eta}$. As in the proof of Proposition \ref{denom0}, in case $N(\gamma z+\delta)_{*}=0$, our value of $\infty$ is $\frac{\det\eta}{N(\gamma z+\delta)_{*}}$, the extension of $q$ takes $\infty x$ to $\infty^{2}c-\infty a$ for our $a$, and our value $x+ae$ of $\frac{\eta w\eta^{*}}{\det\eta}$ is indeed associated with $(\infty x)_{a}+\infty\sigma_{c}$. This proves the proposition.
\end{proof}

\smallskip

Now that the definition of the space $\mathbf{H}_{V,q}^{c}$ and the action of $\operatorname{V}(V,q)$ are defined in their entirety, we can consider some of their properties. Note that for $v \in V$, the matrix $\binom{1\ \ v}{0\ \ 1}$ lies in $\operatorname{V}(V,q)$, and, in fact, in $\operatorname{SV}(V,q)$. This can be easily seen using any of the conditions from Theorem \ref{Vahlen}, as the same argument proving that $v \in V$ with non-zero $q(v)$ satisfies $\pi(v)=r_{v}\in\mathcal{O}_{V^{\perp}}(V,q)$ shows that $V$ is contained in the set $\mathcal{T}(V,q)$ from Equation \eqref{entry} (and so are scalars from $\mathbf{F}$), and Remark \ref{invV}, or just Equation \eqref{pairC}, shows that the scalar involving $\alpha$ and $\beta$ in Condition 1 there is the negative of a pairing. Similarly, given $a\in\mathbb{F}^{\times}$ we have $\binom{a\ \ 0}{0\ \ 1}\in\operatorname{V}(V,q)$, and the group structure shows that these matrices combine to an embedding of the semi-direct product $\mathbb{F}^{\times}\rtimes(V,+)$ into $\operatorname{V}(V,q)$.

We can now establish a equivalence, already hinted about above.
\begin{lem}
Given a quadratic space $(V,q)$ over $\mathbb{F}$ and some $c\in\mathbb{F}$, the following conditions are equivalent: (1) There exists $v \in V$ with $q(v)=c$. (2) The inclusions $\mathbf{H}_{V,q}^{c,o}\subseteq\mathbf{H}_{V,q}^{c}$ and $\mathbf{K}_{V,q}^{c,o}\subseteq\mathbf{K}_{V,q}^{c}$ are proper. (3) The action of $\mathbb{F}^{\times}\rtimes(V,+)$ on $\mathbf{H}_{V,q}^{c}$ is not transitive. (4) The action of $\mathbb{F}^{\times}\rtimes(V,+)$ on $\mathbf{H}_{V,q}^{c}$ is not faithful. (5) For every $z\in\mathbf{H}_{V,q}^{c,o}$ there exists a matrix $\binom{\alpha\ \ \beta}{\gamma\ \ \delta}\in\operatorname{V}(V,q)$ such that the norm $N(\gamma z+\delta)$ from Lemma \ref{Ncz+d} vanishes. \label{FxVc}
\end{lem}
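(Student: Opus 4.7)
The plan is to prove the equivalences $(1)\Leftrightarrow(2)$, $(2)\Leftrightarrow(3)$, $(1)\Leftrightarrow(5)$ and $(1)\Leftrightarrow(4)$ in turn, with (1) as the common anchor.

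The equivalence $(1)\Leftrightarrow(2)$ should unwind directly from Definition \ref{boundary} and Lemma \ref{bdnorm}, since both $\partial\overline{V}_{\mathbb{F}}^{c}$ and $\mathbf{K}_{V,q}^{c}\setminus\mathbf{K}_{V,q}^{c,o}$ are parametrized by vectors $u\in V$ with $q(u)=c$; the case $c=0$ is always covered on both sides (take $v=0$ for (1), and $u=0$, $b\neq 0$ for the boundary). For $(2)\Leftrightarrow(3)$ the key observation is that $\binom{a\ \ v}{0\ \ 1}$ acts as $z\mapsto az+v$ on $\mathbf{H}_{V,q}^{c,o}$ and, by a direct application of Proposition \ref{etaonbd}, as $(\infty u)_{b}+\infty\sigma_{c}\mapsto(\infty u)_{ba-(u,v)}+\infty\sigma_{c}$ on $\partial\overline{V}_{\mathbb{F}}^{c}$; so both pieces of $\mathbf{H}_{V,q}^{c}$ are preserved, and since the action on $\mathbf{H}_{V,q}^{c,o}$ alone is transitive (choose $a=t_{2}/t_{1}$ and $v=v_{2}-av_{1}$ to send $z_{1}$ to $z_{2}$), transitivity on the whole of $\mathbf{H}_{V,q}^{c}$ is equivalent to the boundary being empty.

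For $(1)\Rightarrow(5)$ I would exhibit a witness: given $v_{0}$ with $q(v_{0})=c$ and $z=v'+t\sigma_{c}\in\mathbf{H}_{V,q}^{c,o}$, the matrix $\binom{0\ \ 1}{1\ \ w}$ with $w:=tv_{0}-v'$ lies in $\operatorname{V}(V,q)$ by a direct check of Theorem \ref{Vahlen}(1) (using $\beta\overline{\delta}=-w\in V$ and $\gamma u\overline{\delta}+\delta\overline{u}\overline{\gamma}=-(u,w)\in\mathbb{F}$), and substitution into Lemma \ref{Ncz+d} yields $N(\gamma z+\delta)=-q_{\mathbb{F}}^{c}(z+w)=-q(tv_{0})+ct^{2}=0$. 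The converse $(5)\Rightarrow(1)$ proceeds by applying Proposition \ref{denom0} to any fixed $z\in\mathbf{H}_{V,q}^{c,o}$ with the matrix supplied by (5): the image lies in $\partial\overline{V}_{\mathbb{F}}^{c}$, and the parametrization of that boundary by Definition \ref{boundary} immediately produces $u\in V$ with $q(u)=c$.

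For $(1)\Leftrightarrow(4)$ I will read (4) as asserting that some non-identity element of $\mathbb{F}^{\times}\rtimes(V,+)$ has a fixed point in $\mathbf{H}_{V,q}^{c}$, since the literal ``trivial-kernel'' reading is always false: on $\mathbf{H}_{V,q}^{c,o}$ the equality $az+v=z$ with $t\neq 0$ forces $(a,v)=(1,0)$. Assuming (1), if some $v_{0}\in V\setminus V^{\perp}$ has $q(v_{0})=c$ then $(\infty v_{0})_{0}+\infty\sigma_{c}$ is fixed by every scaling $\binom{a\ \ 0}{0\ \ 1}$ (specialize the boundary formula to $b=0$); otherwise $c=0$ and only vectors of $V^{\perp}$ are $q$-isotropic, in which case the boundary points $(\infty\cdot 0)_{b}+\infty\sigma_{0}$ with $b\neq 0$ are fixed by every translation $\binom{1\ \ v}{0\ \ 1}$, because $(0,v)=0$. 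Conversely, if (1) fails then $\mathbf{H}_{V,q}^{c}=\mathbf{H}_{V,q}^{c,o}$ and the same forcing argument leaves only the identity. The main obstacle is settling the intended reading of (4) and, within $(1)\Rightarrow(4)$, locating the fixed boundary point in the degenerate subcase where the only $q$-representative of $c$ lies in $V^{\perp}$.
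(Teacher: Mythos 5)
Your proposal is correct and follows essentially the same route as the paper: (1)$\Leftrightarrow$(2) via Definition \ref{boundary}, the simply transitive action of $\binom{a\ \ v}{0\ \ 1}$ on $\mathbf{H}_{V,q}^{c,o}$ together with the boundary formula $(\infty u)_{b}+\infty\sigma_{c}\mapsto(\infty u)_{ab-(u,v)}+\infty\sigma_{c}$ for (3) and (4), and the explicit witness with $\gamma=1$, $\delta=tu-v$ (plus Proposition \ref{denom0} for the converse) for (5). Your reading of (4) as the existence of non-trivial point stabilizers, rather than a non-trivial kernel, is exactly what the paper's own proof establishes (``some non-trivial stabilizing elements exist''), so that is a clarification rather than a deviation.
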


\begin{proof}
Condition (2) is equivalent to Definition \ref{boundary} being non-trivial, which happens exactly when Condition (1) is satisfied. Now, a general element of $\mathbb{F}^{\times}\rtimes(V,+)$ is a product $\binom{1\ \ v}{0\ \ 1}\binom{a\ \ 0}{0\ \ 1}=\binom{a\ \ v}{0\ \ 1}$, which takes the element $\sigma_{c}$ of $\mathbf{H}_{V,q}^{c,o}$ to $v+t\sigma_{c}$. This shows that $\mathbf{H}_{V,q}^{c,o}$ is the orbit of $\sigma_{c}$ under the action of this subgroup, and as the entries of $z=v+t\sigma_{c}$ determine the element of $\mathbb{F}^{\times}\rtimes(V,+)$ taking $\sigma_{c}$ to it, the action on this orbit is faithful. On the other hand, the formula from Proposition \ref{etaonbd} shows that in case an element $(\infty u)_{b}+\infty\sigma_{c}\in\partial\overline{V}_{\mathbb{F}}^{c}$ exists, the action of our element $\binom{a\ \ v}{0\ \ 1}$ takes it to $(\infty u)_{ab-(u,v)}+\infty\sigma_{c}$, and some non-trivial stabilizing elements exist. This means that Conditions (1) and (2) are equivalent to (3) and (4) as well. Finally, Proposition \ref{denom0} shows that Condition (5) implies Condition (2), and conversely, if Condition (1) gives a vector $u \in V$ with $q(u)=c$ and $z=v+t\sigma_{c}$ then the matrix with $\alpha=0$, $\gamma=-\beta=1$, and $\delta=tu-v$ is in $\operatorname{V}(V,q)$ by considerations similar to those for $\binom{a\ \ v}{0\ \ 1}$, and $N(\gamma z+\delta)=N(z-v+tu)=-q_{\mathbb{F}}^{c}(tu+t\sigma_{c})=0$. This proves the lemma.
\end{proof}

We deduce the following consequence.
\begin{prop}
The action of $\operatorname{V}(V,q)$ on $\mathbf{H}_{V,q}^{c}$ is transitive, and the stabilizer of the base point $\sigma_{c}\in\mathbf{H}_{V,q}^{c}$ consists of those matrices $\binom{\delta'\ \ -c\gamma'}{\gamma\ \ \ \ \ \delta\ }$ with $\gamma$ and $\delta$ in $\mathcal{T}(V,q)$ for which $\gamma\delta^{*} \in V$ and $N(\delta)+cN(\gamma)\neq0$. If the conditions from Lemma \ref{FxVc} hold then the action of $\operatorname{SV}(V,q)$ is also transitive, and in any case the stabilizer of $\sigma_{c}$ there consists of similar matrices, but with the equality $N(\delta)+cN(\gamma)=1$. \label{transact}
\end{prop}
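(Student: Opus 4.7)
The proof splits into three pieces: transitivity of $\operatorname{V}(V,q)$ on $\mathbf{H}_{V,q}^{c}$, the explicit form of the stabilizer of $\sigma_{c}$, and the analogous statements for $\operatorname{SV}(V,q)$.

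For the transitivity of $\operatorname{V}(V,q)$, the cleanest approach is to reduce, via the correspondences in Lemmas \ref{normc} and \ref{bdnorm} and the surjection $\operatorname{V}(V,q)\twoheadrightarrow\operatorname{O}_{V^{\perp}}(V_{U},q_{U})$ from Theorem \ref{Vahlen} combined with Lemma \ref{NinFx}, to transitivity of $\operatorname{O}_{V^{\perp}}(V_{U},q_{U})$ on $\mathbf{K}_{V,q}^{c}$. Since the non-degenerate quotient $(\overline{V_{U}},\overline{q_{U}})$ contains the hyperbolic plane (the image of $U$), Witt's theorem gives transitivity of $\operatorname{O}(\overline{V_{U}},\overline{q_{U}})$ on vectors of any prescribed norm $c$ (including nonzero isotropic ones when $c=0$, matching the exclusion of $V^{\perp}$ from $\mathbf{K}_{V,q}^{0}$), and the short exact sequence \eqref{OSES} then lifts any such isometry to $\operatorname{O}_{V^{\perp}}(V_{U},q_{U})$ after adjusting the $V^{\perp}$-component by an element of $\operatorname{Hom}(\overline{V_{U}},V^{\perp})$. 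More concretely and in line with the paper's M\"obius-theoretic style, one can combine the transitive action of $\mathbb{F}^{\times}\rtimes(V,+)$ on $\mathbf{H}_{V,q}^{c,o}$ from Lemma \ref{FxVc} with the explicit matrix $\binom{0\ \ -1}{1\ \ \ u}$ for $u\in V$ with $q(u)=c$, which lies in $\operatorname{V}(V,q)$ by Condition~2 of Theorem \ref{Vahlen} and by Proposition \ref{denom0} sends $\sigma_{c}$ to a boundary point with $V$-component $u$ (since $N(\sigma_{c}+u)=c-q(u)=0$); composition with translations, rescalings, and diagonal reflections from the Clifford group then exhausts the boundary.

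For the stabilizer of $\sigma_{c}$, I would impose $(\alpha\sigma_{c}+\beta)(\gamma\sigma_{c}+\delta)^{-1}=\sigma_{c}$, rearranged as $\alpha\sigma_{c}+\beta=\sigma_{c}(\gamma\sigma_{c}+\delta)$. Using that $\sigma_{c}$ is orthogonal to $V$ and hence $\sigma_{c}\chi=\chi'\sigma_{c}$ for every $\chi\in\mathcal{C}$, together with $\sigma_{c}^{2}=-c$, the right-hand side equals $-c\gamma'+\delta'\sigma_{c}$, so matching the $V$- and $\sigma_{c}$-components forces $\alpha=\delta'$ and $\beta=-c\gamma'$. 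Plugging this ansatz into Condition~2 of Theorem \ref{Vahlen}, the requirement that all four entries lie in $\mathcal{T}(V,q)$ collapses to $\gamma,\delta\in\mathcal{T}(V,q)$ (since $\mathcal{T}$ is stable under the grading involution and scalar multiplication), the condition $\alpha\beta^{*},\delta\gamma^{*}\in V$ reduces to $\gamma\delta^{*}\in V$ alone (using the identity $\delta'\overline{\gamma}=(\delta\gamma^{*})'$ to deduce $\alpha\beta^{*}\in V$ for free), and the pseudo-determinant $\alpha\delta^{*}-\beta\gamma^{*}$ computes to $\delta'\delta^{*}+c\gamma'\gamma^{*}=N(\delta)+cN(\gamma)$, which must lie in $\mathbb{F}^{\times}$; the converse is a direct verification.

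For $\operatorname{SV}(V,q)$, the stabilizer description follows at once by intersecting with the kernel of the pseudo-determinant. Given transitivity of $\operatorname{V}(V,q)$, transitivity of $\operatorname{SV}(V,q)$ on $\mathbf{H}_{V,q}^{c}$ is equivalent to surjectivity of $\det$ on $\operatorname{Stab}_{\operatorname{V}(V,q)}(\sigma_{c})$. Under the hypothesis of Lemma \ref{FxVc} one picks $u\in V$ with $q(u)=c$; when $c\neq 0$ the one-parameter family of stabilizer matrices with $\gamma=su$ and $\delta=a\in\mathbb{F}^{\times}$ realizes $\det=a^{2}-c^{2}s^{2}=(a-cs)(a+cs)$, which attains any prescribed $\lambda\in\mathbb{F}^{\times}$ (solve $a-cs=1$, $a+cs=\lambda$ when $\lambda\neq -1$, and handle $\lambda=-1$ by taking instead $\delta=0$ and $\gamma=u/c$). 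I expect the case $c=0$ to be the main obstacle, since the stabilizer's pseudo-determinant then reduces to $N(\delta)$ and one must invoke the full structure of $\mathcal{T}(V,q)$---including higher-grade elements---to ensure that the norm image exhausts $\mathbb{F}^{\times}$.
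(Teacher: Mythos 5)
Your stabilizer computation is the paper's: impose $\alpha\sigma_{c}+\beta=\sigma_{c}(\gamma\sigma_{c}+\delta)$, use $\sigma_{c}\chi=\chi'\sigma_{c}$ and $\sigma_{c}^{2}=-c$ to force $\alpha=\delta'$, $\beta=-c\gamma'$, and read the rest off Condition 2 of Theorem \ref{Vahlen}. For transitivity of $\operatorname{V}(V,q)$ your two routes are not equivalent. The Witt-theorem route (transitivity of $\operatorname{O}_{V^{\perp}}(V_{U},q_{U})$ on $\mathbf{K}_{V,q}^{c}$ via Witt on $\overline{V_{U}}$ plus a correction from the kernel of \eqref{OSES}) is sound and genuinely different from the paper, which stays inside the M\"{o}bius picture: it observes that $\mathbb{F}^{\times}\rtimes(V,+)$ sweeps out $\mathbf{H}_{V,q}^{c,o}$ and then, for each boundary point $(\infty u)_{b}+\infty\sigma_{c}$, applies the inversion with $\alpha=0$, $\gamma=-\beta=1$ and $\delta\in V$ chosen so that $b-(u,\delta)\neq0$, which by Proposition \ref{etaonbd} pulls that boundary point back into $\mathbf{H}_{V,q}^{c,o}$. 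Your ``concrete'' outward route does not exhaust the boundary when $c=0$: translations and rescalings fix the $V$-component $u$ of a boundary point, and diagonal Clifford matrices act through $\operatorname{O}_{V^{\perp}}(V,q)$, which fixes $V^{\perp}$ pointwise, so the points $(\infty u)_{b}$ with $u \in V^{\perp}$ (which Definition \ref{boundary} explicitly includes for $c=0$, with $b\neq0$) are never reached from the point you produce out of $\sigma_{0}$. The paper's inward direction avoids this, since for $u \in V^{\perp}$ one has $(u,\delta)=0$ and $b\neq0$ automatically.

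For $\operatorname{SV}(V,q)$, reducing transitivity to surjectivity of $\det$ on the stabilizer of $\sigma_{c}$ is correct and is a cleaner organization than the paper's, which instead joins $\sigma_{c}$ to $a\sigma_{c}$ by invoking the corollary to Proposition 3 in Chapter IV of \cite{[S]} to produce $\mu \in V_{\mathbb{F}}^{c}$ with $q_{\mathbb{F}}^{c}(\mu)=-\frac{1}{a}$ and writing an explicit matrix; for $c\neq0$ your family $\gamma=su$, $\delta=a$ with determinant $(a-cs)(a+cs)$ is more elementary and uses only the vector $u$ supplied by Condition (1) of Lemma \ref{FxVc}. The $c=0$ case you flag is, however, a genuine gap that cannot be closed as stated: there the stabilizer determinant is $N(\delta)$ for $\delta\in\Gamma^{\mathbb{F}^{\times}}(V,q)$, whose image is the subgroup of $\mathbb{F}^{\times}$ generated by the squares and the values $-q(v)$ with $q(v)\neq0$, and this is a proper subgroup in general (for $V=0$ over $\mathbb{Q}$ it is $(\mathbb{Q}^{\times})^{2}$, and the $\operatorname{SV}$-orbit of $\sigma_{0}$ in $\mathbf{H}_{0,0}^{0}$ is $(\mathbb{Q}^{\times})^{2}\sigma_{0}\cup\partial\overline{V}_{\mathbb{F}}^{0}$, not everything), even though Condition (1) of Lemma \ref{FxVc} holds vacuously for $c=0$. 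Your reduction thus exposes a defect in the statement itself: the paper's proof asserts at this point that the conditions of Lemma \ref{FxVc} force $(V_{\mathbb{F}}^{c},q_{\mathbb{F}}^{c})$ to contain a non-radical isotropic vector, which is true for $c\neq0$ (take $u+\sigma_{c}$) but for $c=0$ amounts to $\overline{V}$ being isotropic and does not follow. The $\operatorname{SV}$-transitivity claim for $c=0$ therefore needs that extra hypothesis, or else a coset description as in Corollary \ref{SVorbits}.
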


\begin{proof}
The proof of Lemma \ref{FxVc} shows that all of $\mathbf{H}_{V,q}^{c,o}$ is contained in a single orbit of $\operatorname{V}(V,q)$, and if the conditions from Lemma \ref{FxVc} hold, then given an element $z=(\infty u)_{b}+\infty\sigma_{c}\in\partial\overline{V}_{\mathbb{F}}^{c}$, we can take a matrix in $\operatorname{V}(V,q)$ with $\alpha=0$, $\gamma=-\beta=1$, and $\delta \in V$. The fact that if $u \in V^{\perp}$ then $b\neq0$ implies that we can always choose $\delta$ such that the resulting expression $b-(u,\delta)$ for $N(\gamma z+\delta)_{*}$ from Lemma \ref{linininf} (using Equation \eqref{pairC} or Remark \ref{invV} again) will not vanish, and thus Proposition \ref{etaonbd} implies that this matrix connects also our element of $\partial\overline{V}_{\mathbb{F}}^{c}$ to our orbit. This implies the transitivity. For the stabilizer of $\sigma_{c}$, we get that $\binom{\alpha\ \ \beta}{\gamma\ \ \delta}$ stabilizes $\sigma_{c}$ if and only if $\alpha\sigma_{c}+\beta=\sigma_{c}(\gamma\sigma_{c}+\delta)$, which using the commutation relations between $\sigma_{c}$ and elements of $\mathcal{C}$ inside $\mathcal{C}(V_{\mathbb{F}}^{c},q_{\mathbb{F}}^{c})$, this is equivalent to $\alpha\sigma_{c}+\beta=\gamma'\sigma_{c}^{2}+\delta'\sigma_{c}=\delta'\sigma_{c}-c\gamma'$ (because $\sigma_{c}^{2}=-c$) and thus to $\alpha=\delta'$ and $\beta=-c\gamma'$. Since the determinant $\delta\alpha^{*}-\gamma\beta^{*}$ takes the value $N(\delta)+cN(\gamma)$, and we have the condition $\gamma\delta^{*} \in V$ from Condition (2) of Theorem \ref{Vahlen}, this determines the stabilizer.

Now, since $\det$ from Corollary \ref{detSV} is a group homomorphism, every element of $\operatorname{V}(V,q)$ is the product of an element of $\operatorname{SV}(V,q)$ and a matrix $\binom{a\ \ 0}{0\ \ 1}$ (with $a$ being the determinant), and as the latter matrix takes $\sigma_{c}$ to $a\sigma_{c}$, we deduce from the transitivity of the action of $\operatorname{V}(V,q)$ that $\mathbf{H}_{V,q}^{c}$ the union, over $a$, of the orbits of $a\sigma_{c}$ under $\operatorname{SV}(V,q)$. But we saw that the conditions from Lemma \ref{FxVc} imply that the quadratic space $(V_{\mathbb{F}}^{c},q_{\mathbb{F}}^{c})$ contains non-zero vectors having $q_{\mathbb{F}}^{c}$-value 0 that are not perpendicular to the entire space, so that the corollary to Proposition 3 in Chapter IV of \cite{[S]} implies that for every $a\in\mathbb{F}^{\times}$ there exists $\mu \in V_{\mathbb{F}}^{c}$ with $q_{\mathbb{F}}^{c}(\mu)=-\frac{1}{a}$. We can take that $\mu$ to be in $\mathbf{H}_{V,q}^{c,o}$ and after we write it as $v+d\sigma_{c}$ for $v \in V$ and $d\in\mathbb{F}^{\times}$, we find that the matrix $\binom{\alpha\ \ \beta}{\gamma\ \ \delta}$ with $\delta=v$, $\gamma=d$, $\alpha=-av$, and $\beta=-\frac{1+aq(v)}{d}$ lies in $\operatorname{SV}(V,q)$ (as the product of $\binom{1\ \ -av/d}{0\ \ \ \ \ 1\ \ }$ and $\binom{0\ \ -1/d}{d\ \ \ \ \ v\ \ }$) and satisfies $N(\gamma z+\delta)=N(v+d\sigma_{c})=-q_{\mathbb{F}}^{c}(\mu)=\frac{1}{a}$. As the value of $\beta$ equals $acd$ (since $\frac{1}{a}=-q_{\mathbb{F}}^{c}(v+d\sigma_{c})=cd^{2}-q(v)$), we deduce that this matrix takes $\sigma_{c}$ to $a\sigma_{c}$, so that all of these orbits of $\operatorname{SV}(V,q)$ are a single one. This proves the transitivity of the action of that group as well, and the determination of the stabilizer is the same as above, with the determinant from Corollary \ref{detSV} being equal to 1. This proves the proposition.
\end{proof}

The proof of Proposition \ref{transact} allows us to determine the orbits of $\operatorname{SV}(V,q)$ in $\mathbf{H}_{V,q}^{c}$ also when the conditions from Lemma \ref{FxVc} are not satisfied.
\begin{cor}
When $\mathbf{H}_{V,q}^{c}$ has no boundary points, the set of elements of $\mathbb{F}$ that can be obtained as $N(\gamma z+\delta)$ for $\binom{\alpha\ \ \beta}{\gamma\ \ \delta}$ in $\operatorname{SV}(V,q)$ (or in $\operatorname{V}(V,q)$) and $z\in\mathbf{H}_{V,q}^{c}$ is a subgroup of $\mathbb{F}^{\times}$, that contains the subgroup $(\mathbb{F}^{\times})^{2}$ of squares. The orbits of $\operatorname{SV}(V,q)$ in $\mathbf{H}_{V,q}^{c}$ are then in one-to-one correspondence with the cosets of that subgroup inside $\mathbb{F}^{\times}$. \label{SVorbits}
\end{cor}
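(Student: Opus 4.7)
The plan is to identify the set $S$ in the statement with the pseudo-determinant image $\det(K)$ of the stabilizer $K=\operatorname{Stab}_{\operatorname{V}(V,q)}(\sigma_{c})$ from Proposition \ref{transact}, and then invoke the standard group-theoretic fact that if $G$ acts transitively on $X$ with point stabilizer $K$ and $N\triangleleft G$, the $N$-orbits on $X$ correspond bijectively to $G/NK$. Taking $G=\operatorname{V}(V,q)$, $N=\operatorname{SV}(V,q)=\ker\det$ (Corollary \ref{detSV}), $X=\mathbf{H}_{V,q}^{c}$, and $x_{0}=\sigma_{c}$, transitivity of $G$ holds unconditionally by Proposition \ref{transact}, and surjectivity of $\det\colon G\to\mathbb{F}^{\times}$ is witnessed by the matrices $\binom{a\ 0}{0\ 1}$, so the $\operatorname{SV}(V,q)$-orbits on $\mathbf{H}_{V,q}^{c}$ are in bijection with $\mathbb{F}^{\times}/\det(K)$, reducing the corollary to the identification $\det(K)=S$.

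For that identification I would first compute the pseudo-determinant of a stabilizer matrix $\binom{\delta'\ -c\gamma'}{\gamma\ \delta}$: using $\delta'\delta^{*}=N(\delta)$ and $\gamma'\gamma^{*}=N(\gamma)$ it equals $N(\delta)+cN(\gamma)$, which by Lemma \ref{Ncz+d} at $z=\sigma_{c}$ is precisely $N(\gamma\sigma_{c}+\delta)$; this gives $\det(K)\subseteq S$. The reverse inclusion at $z=\sigma_{c}$ is almost immediate from Proposition \ref{transact}, which characterizes $K$ by the conditions $\gamma,\delta\in\mathcal{T}(V,q)$, $\gamma\delta^{*}\in V$, and $N(\delta)+cN(\gamma)\neq 0$; the first two are satisfied by any bottom row of an element of $\operatorname{V}(V,q)$ (Condition 2 of Theorem \ref{Vahlen}), and the third is automatic here because, under our hypothesis, Condition (5) of Lemma \ref{FxVc} fails, forcing $N(\gamma z+\delta)\in\mathbb{F}^{\times}$ for every matrix in $\operatorname{V}(V,q)$ and every $z\in\mathbf{H}_{V,q}^{c}$. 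Hence every such bottom row extends to a stabilizer matrix with the same pseudo-determinant, giving $\{N(\gamma\sigma_{c}+\delta):\eta\in\operatorname{V}(V,q)\}=\det(K)$.

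To drop the restriction $z=\sigma_{c}$ I would appeal to the cocycle identity $N(Cz+D)=N(\gamma_{1}(\eta_{2}\cdot z)+\delta_{1})\cdot N(\gamma_{2}z+\delta_{2})$ for $\eta_{1}\eta_{2}=\binom{A\ B}{C\ D}$, which comes from the factorization $Cz+D=(\gamma_{1}(\eta_{2}\cdot z)+\delta_{1})(\gamma_{2}z+\delta_{2})$ together with scalar multiplicativity of $N$. Choosing $\eta_{0}\in\operatorname{V}(V,q)$ with $\eta_{0}\sigma_{c}=z$ by transitivity, this writes $N(\gamma z+\delta)$ as the ratio $N(C\sigma_{c}+D)/N(\gamma_{0}\sigma_{c}+\delta_{0})$ of two elements of the subgroup $\det(K)$, hence again in $\det(K)$. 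Interchangeability of $\operatorname{SV}(V,q)$ and $\operatorname{V}(V,q)$ in the definition of $S$ is immediate because left-multiplication by $\binom{a\ 0}{0\ 1}$ preserves the bottom row while rescaling $\det$. Finally, the inclusion $(\mathbb{F}^{\times})^{2}\subseteq\det(K)$ is witnessed by the scalar matrices $\binom{t\ 0}{0\ t}$, which stabilize $\sigma_{c}$ (as $t$ is central, so $t\sigma_{c}\cdot t^{-1}=\sigma_{c}$) with pseudo-determinant $t^{2}$. The only genuinely new piece of bookkeeping is the cocycle reduction; the algebraic heavy lifting needed to characterize the stabilizer is already carried out in Proposition \ref{transact}.
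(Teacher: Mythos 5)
Your proposal is correct, and it organizes the argument differently from the paper. The paper never mentions the stabilizer in this proof: it uses the translation matrices $\binom{1\ \ v}{0\ \ 1}$ to reduce the orbit question to the axis $\{t\sigma_{c}\}$, reads off from the explicit M\"{o}bius formula that $t\sigma_{c}$ and $s\sigma_{c}$ are $\operatorname{SV}(V,q)$-related exactly when $t/s$ is a norm $N(\gamma z+\delta)$, justifies the subgroup property by the remark that the M\"{o}bius transformations define an action (which is your cocycle identity in disguise), and gets the squares from $\binom{1/d\ \ 0}{\ 0\ \ \ d}$. You instead identify the norm set with $\det\big(\operatorname{Stab}_{\operatorname{V}(V,q)}(\sigma_{c})\big)$ via the explicit stabilizer of Proposition \ref{transact} (the computation $\det=N(\delta)+cN(\gamma)=N(\gamma\sigma_{c}+\delta)$ and the observation that every bottom row completes to a stabilizer matrix), reduce general $z$ to $z=\sigma_{c}$ by the cocycle factorization $Cz+D=(\gamma_{1}(\eta_{2}\cdot z)+\delta_{1})(\gamma_{2}z+\delta_{2})$, and then invoke the formal bijection between $N$-orbits and $G/NK$ for $N=\ker\det$. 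What this buys is that the subgroup property and the coset correspondence become purely formal (image of a subgroup under a homomorphism, double-coset counting), at the price of needing the stabilizer description and the cocycle identity explicitly; the paper's route is shorter and avoids the stabilizer entirely but leaves the group-theoretic bookkeeping more implicit. All your individual steps check out: the factorization is legitimate because $N(\gamma_{2}z+\delta_{2})\in\mathbb{F}^{\times}$ makes $\gamma_{2}z+\delta_{2}$ invertible, $\gamma\delta^{*}\in V$ is equivalent to $\delta\gamma^{*}\in V$ by applying transposition, and the scalar matrices do lie in the stabilizer with pseudo-determinant $t^{2}$. One small caution: the literal negation of Condition (5) of Lemma \ref{FxVc} only yields a single $z$ with non-vanishing norms; the statement you actually need (no vanishing norm for \emph{any} $z$ and matrix) follows from Proposition \ref{denom0}, since any single vanishing instance already produces a boundary point --- the paper glosses over this in exactly the same way, so it is not a gap particular to your write-up.
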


\begin{proof}
We saw in Lemma \ref{FxVc} that in this case $N(\gamma z+\delta)$ never vanishes, i.e., lies in $\mathbb{F}^{\times}$, and since multiplication by $\binom{a\ \ 0}{0\ \ 1}$ from the left does not change the lower row, we see that these norms obtained from $\operatorname{SV}(V,q)$ and from $\operatorname{V}(V,q)$ are the same. Now, using the translation matrices $\binom{1\ \ v}{0\ \ 1}$ from $\operatorname{SV}(V,q)$, we deduce that two elements $v+t\sigma_{c}$ and $u+s\sigma_{c}$ are related via $\operatorname{SV}(V,q)$ if and only if $t\sigma_{c}$ and $s\sigma_{c}$ are, and we saw in the formula for the action from Proposition \ref{regMoeb} that this happens if and only if the ratio between $t$ and $s$ is some norm $N(\gamma z+\delta)$. The fact that these norms are a subgroup is now a consequence of the M\"{o}bius transformations defining an action, the squares are contained in it as the norms arising from $\binom{1/d\ \ 0}{\ 0\ \ \ d}\in\operatorname{SV}(V,q)$ for $d\in\mathbb{F}^{\times}$, and the relation between the $\operatorname{SV}(V,q)$-orbits and cosets of this subgroup, as subsets of $\mathbb{F}^{\times}$, also follows from this argument. This proves the corollary.
\end{proof}

\smallskip

Gathering all of these results yields the following theorem.
\begin{thm}
Take a quadratic space $(V,q)$ over $\mathbb{F}$, and a scalar $c$ from $\mathbb{F}$. Then the complement $\mathbf{H}_{V,q}^{c,o}$ of $V$ inside $V_{\mathbb{F}}^{c}$ admits a completion $\mathbf{H}_{V,q}^{c}$ on which the Vahlen group $\operatorname{V}(V,q)$ from Theorem \ref{Vahlen} acts transitively via M\"{o}bius transformations. This space is identified with the set $\mathbf{K}_{V,q}^{c}$ of vectors in the direct sum $V_{U}$ of $V$ with a hyperbolic plane that are not in $V^{\perp}$ and have $q_{U}$-value $c$ as sets with a $\operatorname{V}(V,q)$-action, so both are identified with the left coset space of the subgroup $\big\{\binom{\delta'\ \ -c\gamma'}{\gamma\ \ \ \ \ \delta\ }\;\big|\;\gamma,\delta\in\mathcal{T}(V,q),\ \gamma\delta^{*} \in V,\ N(\delta)+cN(\gamma)\neq0\big\}$ in $\operatorname{V}(V,q)$. For the special Vahlen group $\operatorname{SV}(V,q)$ from Corollary \ref{detSV}, the action is transitive when $\mathbf{H}_{V,q}^{c,o}\subsetneq\mathbf{H}_{V,q}^{c}$, and otherwise decomposes this space according to the cosets of a norm group inside $\mathbb{F}^{\times}$, with the stabilizer being the determinant 1 subgroup of the subgroup above. \label{thmVqc}
\end{thm}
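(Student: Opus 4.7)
The plan is to assemble the ingredients already established in the section into a single statement. First I would recall that $\mathbf{H}_{V,q}^{c}$ is defined via Definition \ref{boundary} as $\mathbf{H}_{V,q}^{c,o}\cup\partial\overline{V}_{\mathbb{F}}^{c}$, so the existence of the completion is immediate from that definition. The identification with $\mathbf{K}_{V,q}^{c}$ is then supplied by combining Lemmas \ref{normc} and \ref{bdnorm}: the former produces a bijection $\mathbf{K}_{V,q}^{c,o}\leftrightarrow\mathbf{H}_{V,q}^{c,o}$ by extracting the coefficients $v\in V$ and $t\in\mathbb{F}^{\times}$ from the normalized presentation of $w$, and the latter handles the boundary case where the pairing of $w$ with $e$ vanishes, matching such $w=u+be$ with $(\infty u)_{b}+\infty\sigma_{c}\in\partial\overline{V}_{\mathbb{F}}^{c}$.

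Next I would verify that this set-theoretic identification is $\operatorname{V}(V,q)$-equivariant. This splits into three cases according to whether $z\in\mathbf{H}_{V,q}^{c,o}$ or $z\in\partial\overline{V}_{\mathbb{F}}^{c}$, and whether the denominator norm vanishes. For regular $z$ and non-vanishing $N(\gamma z+\delta)$ this is exactly Proposition \ref{regMoeb}; for regular $z$ with vanishing denominator it is Proposition \ref{denom0}; and for boundary $z$ it is Proposition \ref{etaonbd}, in both of its sub-cases. Since the action on $\mathbf{K}_{V,q}^{c}$ comes from the orthogonal action of $\operatorname{O}_{V^{\perp}}(V_{U},q_{U})$ via the isomorphism of Theorem \ref{Vahlen} and Lemma \ref{NinFx}, equivariance immediately upgrades the bijection to an isomorphism of $\operatorname{V}(V,q)$-sets.

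For the transitivity statement and the coset description, I would invoke Proposition \ref{transact} directly. Transitivity is established there, and the stabilizer of the base point $\sigma_{c}$ is computed by solving $\alpha\sigma_{c}+\beta=\sigma_{c}(\gamma\sigma_{c}+\delta)$ inside $\mathcal{C}(V_{\mathbb{F}}^{c},q_{\mathbb{F}}^{c})$, which forces $\alpha=\delta'$ and $\beta=-c\gamma'$ and yields the subgroup written in the theorem; the constraint $\gamma\delta^{*}\in V$ comes from Condition 2 of Theorem \ref{Vahlen} and the non-vanishing condition expresses invertibility of the pseudo-determinant. Standard orbit--stabilizer then realizes $\mathbf{H}_{V,q}^{c}$ as the left coset space.

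For the final assertion concerning $\operatorname{SV}(V,q)$, the transitivity under the hypothesis $\mathbf{H}_{V,q}^{c,o}\subsetneq\mathbf{H}_{V,q}^{c}$ is the content of Proposition \ref{transact}, while the description of orbits when this hypothesis fails is precisely Corollary \ref{SVorbits}: the denominator norms form a subgroup of $\mathbb{F}^{\times}$ containing $(\mathbb{F}^{\times})^{2}$, and the orbits correspond bijectively with its cosets. The stabilizer within $\operatorname{SV}(V,q)$ is cut out from the stabilizer above by the single extra condition $N(\delta)+cN(\gamma)=1$, since this expression coincides with the pseudo-determinant of the stabilizing matrix. No step here is a real obstacle, because the work has all been done in the preceding propositions; the only delicacy is making sure the case analysis covers all four combinations (regular or boundary source point, finite or infinite image), and that is already organized by Propositions \ref{regMoeb}, \ref{denom0}, and \ref{etaonbd}.
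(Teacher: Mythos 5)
Your proposal is correct and matches the paper's approach exactly: the paper offers no separate proof of Theorem \ref{thmVqc} beyond the remark that it is obtained by ``gathering all of these results,'' and the results you cite (Definition \ref{boundary}, Lemmas \ref{normc} and \ref{bdnorm}, Propositions \ref{regMoeb}, \ref{denom0}, \ref{etaonbd}, and \ref{transact}, and Corollary \ref{SVorbits}) are precisely the ones being gathered. Your explicit organization of the equivariance check into the cases of regular versus boundary source point and finite versus infinite image is a faithful account of how those propositions fit together.
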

Note that when $c=0$, the stabilizer from Proposition \ref{transact} and Theorem \ref{thmVqc} involves only $\delta\in\mathcal{T}(V,q)$ with $N(\delta)\neq0$, so that $\delta\in\Gamma^{\mathbb{F}^{\times}}(V,q)$, and this stabilizer is the semi-direct product $\Gamma^{\mathbb{F}^{\times}}(V,q)\rtimes(V,+)$. Conjugating by $\binom{0\ \ -1}{1\ \ \ \ 0}$ takes it to the stabilizer of $(\infty 0)_{1}+\infty\sigma_{0}$ (or of $e\in\mathbf{K}_{V,q}^{c}$), which is the (parabolic) upper-triangular subgroup of $\operatorname{V}(V,q)$. The intersection of both subgroups with $\operatorname{V}(V,q)$ restricts $\Gamma^{\mathbb{F}^{\times}}(V,q)$ to the group $\Gamma^{1}(V,q)$ from Corollary \ref{kerN}. However, when $c\neq0$ these subgroups are stabilizers of a vector of the non-vanishing $q_{U}$-value $c$, so they are isomorphic to the appropriate Clifford groups of the orthogonal complement, which are $\Gamma^{\mathbb{F}^{\times}}(V_{\mathbb{F}}^{-c},q_{\mathbb{F}}^{-c})$ and $\Gamma^{1}(V_{\mathbb{F}}^{-c},q_{\mathbb{F}}^{-c})$ respectively.

\smallskip

Recall that the hyperbolic space over $\mathbb{F}=\mathbb{R}$ also admits the bounded ball model. The corresponding situation is obtained when $V$ is definite, and the value of $c$ has the opposite sign. The ball model is obtained by a certain Cayley transform, but for making the image bounded, or at least contained in $V_{\mathbb{F}}^{c}$, on needs the denominator of the Cayley transform not to vanish. This can never be done for $\operatorname{V}(V,q)$, and also not for $\operatorname{SV}(V,q)$ when its action on $\mathbf{H}_{V,q}^{c}$ is still transitive. We shall therefore not discuss this model further in this paper.

\section{M\"{o}bius Transformations with Paravectors \label{ActPara}}

In this section we modify the arguments from the previous section in order to establish similar results for the Paravector Vahlen group $\widetilde{\operatorname{V}}(V,q)$, defined via the equivalent conditions from Theorem \ref{pvVah}. The results and arguments parallel very much those from the previous section, but as a lot of care has to be taken with additional details, we give the complete proofs here as well.

The Clifford algebra $\mathcal{C}_{U,\mathbb{F}}$ arising from the quadratic space $(V_{U,\mathbb{F}},q_{U,\mathbb{F}})$ is acted upon by its group of units $\mathcal{C}_{U}^{\times}$ by the same formula $\eta:\psi\to\eta\psi\eta^{*}$ for $\eta\in\mathcal{C}_{U,\mathbb{F}}^{\times}$ and $\psi\in\mathcal{C}_{U,\mathbb{F}}$. The vector space $V_{U,\mathbb{F}}$ is the direct sum of $V$, $\mathbb{F}\rho$, $\mathbb{F}e$, and $\mathbb{F}f$, and the first two summands form $V_{\mathbb{F}}$. We write $\mathcal{C}=\mathcal{C}(V,q)$, and get the following analogue of Lemma \ref{actVU}.
\begin{lem}
If $\eta\in\mathcal{C}_{U,\mathbb{F}}^{\times}$ is associated via Proposition \ref{isoUF} with the matrix $\binom{\alpha\ \ \beta}{\gamma\ \ \delta}$ with entries from $\mathcal{C}$, then the action of $\eta$ on $V_{U,\mathbb{F}}\subseteq\mathcal{C}_{U,\mathbb{F}}$ sends the elements $e$ and $f$ of $V_{U,\mathbb{F}}$ to the expressions $N(\alpha)_{\rho}e+N(\gamma)'_{\rho}f+(\alpha\overline{\gamma})_{\rho}e\rho f+(\gamma\overline{\alpha})'_{\rho}f\rho e$ and $N(\beta)_{\rho}e+N(\delta)'_{\rho}f+(\beta\overline{\delta})_{\rho}e\rho f+(\delta\overline{\beta})'_{\rho}f\rho e$ respectively. In addition, if $\xi\in\mathbb{F} \oplus V$ then its image in $V_{\mathbb{F}}$ under the isomorphism $\iota$ from Lemma \ref{relsefrho} is taken by the action of $\eta$ to \[(\alpha\xi\overline{\beta}+\beta\overline{\xi}\overline{\alpha})_{\rho}e+(\gamma\xi\overline{\delta}+\delta\overline{\xi}\overline{\gamma})'_{\rho}f+(\alpha\xi\overline{\delta}+\beta\overline{\xi}\overline{\gamma})_{\rho}e\rho f+(\gamma\xi\overline{\beta}+\delta\overline{\xi}\overline{\alpha})'_{\rho}f\rho e.\] \label{actpara}
\end{lem}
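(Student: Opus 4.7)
The plan is to mirror the proof of Lemma \ref{actVU} essentially verbatim, but in the larger algebra $\mathcal{C}_{U,\mathbb{F}}$ where the extra generator $\rho$ must be tracked throughout. Starting from the formula of Proposition \ref{isoUF}, I write $\eta=\alpha_{\rho}ef+\beta_{\rho}e\rho+\gamma'_{\rho}f\rho+\delta'_{\rho}fe$. To compute $\eta^{*}$, I combine two observations: first, the transposition in the matrix picture of Proposition \ref{isoUF} sends $\binom{\alpha\ \beta}{\gamma\ \delta}$ to $\binom{\ \delta^{*}\ -\beta^{*}}{-\gamma^{*}\ \ \alpha^{*}}$; second, since transposition coincides with the Clifford involution on the even part $\mathcal{C}_{U,\mathbb{F},+}$, and Lemma \ref{isotoC+} tells us that $\alpha\mapsto\alpha_{\rho}$ intertwines the two Clifford involutions, I have $(\alpha_{\rho})^{*}=\overline{\alpha}_{\rho}$. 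Thus $\eta^{*}$ can be expanded directly as a four-term sum analogous to the one for $\eta$, with entries shuffled as in the matrix transposition.

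For the actions on $e$ and $f$, the strategy is identical to Lemma \ref{actVU}. I first multiply $\eta$ by $e$ from the right, using $e^{2}=0$, the collapses $efe=e$ and $fef=f$ (from $ef+fe=1$), and the anti-commutations $\rho e=-e\rho$, $\rho f=-f\rho$; this kills two of the four terms in $\eta e$. Then I multiply by $\eta^{*}$ from the right, and collapse again in the same way. The resulting four surviving monomials already sit in the $\mathbb{F}$-span of $e,f,ef,fe$ multiplied by elements of $\mathcal{C}_{\mathbb{F},+}$; recognising the products of the form $(\alpha_{\rho})(\overline{\alpha}_{\rho})$ as $N(\alpha)_{\rho}$ and using the conversion identities $\alpha_{\upsilon}e=\alpha_{\rho}e$, $f\alpha_{\upsilon}=\alpha_{\rho}f$ (and their primed counterparts) from Lemma \ref{relsefrho} to repackage the cross terms in front of $e\rho f$ and $f\rho e$ yields exactly the asserted expressions. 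The computation for $f$ is mechanically the same, with the roles of $(\alpha,\gamma)$ replaced by $(\beta,\delta)$.

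For $\iota(\xi)$ with $\xi\in\mathbb{F}\oplus V$, I use the fact from Lemma \ref{relsefrho} that $\iota(\xi)$ anti-commutes with both $e$ and $f$ and satisfies $\rho\iota(\xi)=\iota(\overline{\xi})\rho$. Conjugating by $\eta$ then means moving $\iota(\xi)$ past each of $ef$, $e\rho$, $f\rho$, $fe$, picking up appropriate signs and swaps of $\xi\leftrightarrow\overline{\xi}$ at each step, and afterwards using the simplifications $efe=e$, $fef=f$ as before. The cross terms that survive arrange themselves into the four combinations $(\alpha\xi\overline{\beta}+\beta\overline{\xi}\overline{\alpha})$, $(\gamma\xi\overline{\delta}+\delta\overline{\xi}\overline{\gamma})$, $(\alpha\xi\overline{\delta}+\beta\overline{\xi}\overline{\gamma})$, $(\gamma\xi\overline{\beta}+\delta\overline{\xi}\overline{\alpha})$, and the appropriate $_{\rho}$ or primed-$_{\rho}$ wrapper appears by the same conversion principle used above, together with the coefficient of $e, f, e\rho f, f\rho e$ as claimed.

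The main obstacle is purely bookkeeping: the expansions involve sixteen terms at each stage, and one must consistently decide when to write $\alpha_{\rho}$ versus $\alpha_{\upsilon}$ (tied to whether $e$ or $f$ appears to the right), when the grading involution $'$ appears, and when an extra sign arises from pushing $\rho$ past an odd element of $\mathcal{C}$. The identities assembled in Lemma \ref{relsefrho} are precisely what lets this bookkeeping close up cleanly, so once one commits to pushing all factors of $\rho$ to the right and rewriting via the $e\upsilon=\rho e$, $f\upsilon=f\rho$ identities, the four target coefficients assemble themselves with the correct primes and the correct $_{\rho}$-versus-$_{\upsilon}$ choice.
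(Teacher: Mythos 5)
Your proposal is correct and follows essentially the same route as the paper: expand $\eta$ and $\eta^{*}$ via the isomorphism of Proposition \ref{isoUF}, collapse products using $e^{2}=f^{2}=0$, $efe=e$, $fef=f$, $e\rho e=f\rho f=0$, $\rho e\rho=e$, $\rho f\rho=f$, and the commutation rules of Lemma \ref{relsefrho}, then collect coefficients of $e$, $f$, $e\rho f$, $f\rho e$ (your obtaining $\eta^{*}$ from the matrix-transposition formula of Proposition \ref{isoUF}, rather than transposing the $\upsilon$-form monomial by monomial as the paper does, is an equivalent shortcut resting on the same identities). The only slip is the passing claim that the surviving monomials lie in the span of $e,f,ef,fe$ — they lie in the span of $e,f,e\rho f,f\rho e$ — but your subsequent bookkeeping makes clear you are tracking the correct basis.
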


\begin{proof}
Proposition \ref{isoUF} gives the element $\alpha_{\rho}ef+\beta_{\rho}e\rho+\gamma'_{\rho}f\rho+\delta'_{\rho}fe$ of $\mathcal{C}_{U,\mathbb{F}}^{\times}$, and $\eta^{*}$ is given by $ef\delta^{*}_{\upsilon}+\rho e\beta^{*}_{\upsilon}+\rho f\gamma^{*}_{\upsilon}+fe\alpha^{*}_{\upsilon}$. Lemma \ref{relsefrho} transforms the latter expression into $ef\delta^{*}_{\rho}+\rho e\overline{\beta}_{\rho}+\rho f\gamma^{*}_{\rho}+fe\overline{\alpha}_{\rho}$, and using the vanishing of $e^{2}$ and $f^{2}$ as well as that of $e\rho e$ (since $e\rho=-\rho e$) and the equality $efe=e$ again, we obtain that $\eta e\eta^{*}$ reduces to  $\alpha_{\rho}e\overline{\alpha}_{\rho}+\alpha_{\rho}e\rho f\gamma^{*}_{\rho}+\gamma'_{\rho}f\rho e\overline{\alpha}_{\rho}+\gamma'_{\rho}f\rho e\rho f\gamma^{*}_{\rho}$. But we have $\rho e\rho=e$ (using $e\rho=-\rho e$ and $\rho^{2}=-1$), we saw that $fef=f$, Lemma \ref{relsefrho} gives the commutation relations between $e$, $f$, $\rho$, and elements of $\mathcal{C}_{\mathbb{F},+}$, and the map with index $\rho$ in Lemma \ref{isotoC+} is multiplicative, which transforms the latter expression to the desired one. Similarly, using $f\rho f=0$ as well (as $f\rho=-\rho f$), the expression for $\eta f\eta^{*}$ becomes $\beta_{\rho}e\rho f\rho e\overline{\beta}_{\rho}+\beta_{\rho}e\rho f\delta^{*}_{\rho}+\delta'_{\rho}f\rho e\overline{\beta}_{\rho}+\delta'_{\rho}f\delta^{*}_{\rho}$, which equals, by similar considerations and the equality $\rho f\rho=f$ (using $f\rho=-\rho f$ and $\rho^{2}=-1$), the asserted expression.

Take now some $\xi\in\mathbb{F} \oplus V$, with $\iota(\xi) \in V_{\mathbb{F}}\subseteq\mathcal{C}_{U,\mathbb{F}}$. The commutation relations between $\iota(\xi)$ and $e$, $f$, and $\rho$ in Lemma \ref{relsefrho}, the relation of the former vector to $\xi_{\rho}$, the anti-commutation of $\rho$ with $e$ and $f$, and the equalities $\rho e\rho=e$ and $\rho f\rho=f$ imply that $\eta\iota(\xi)$ equals \[\alpha_{\rho}\iota(\xi)ef+\beta_{\rho}\iota(\overline{\xi})e\rho+\gamma'_{\rho}\iota(\overline{\xi})f\rho+\delta'_{\rho}\iota(\xi)fe=\alpha_{\rho}\xi_{\rho}e\rho f+\beta_{\rho}\overline{\xi}_{\rho}e+\gamma'_{\rho}\overline{\xi}_{\rho}f+\delta'_{\rho}\xi_{\rho}f\rho e.\] Using the anti-commutation of $\rho$ with $e$ and $f$ and the vanishing of $e^{2}$ and $f^{2}$ again, with the equalities $efe=e$ and $fef=f$ as well as $\rho e\rho=e$ and $\rho f\rho=f$ once more, we find that the product of the latter expression with our formula for $\eta^{*}$ is \[\alpha_{\rho}\xi_{\rho}(e\rho f\delta^{*}+e\overline{\beta}_{\rho})+\beta_{\rho}\overline{\xi}_{\rho}(e\rho f\gamma^{*}_{\rho}+e\overline{\alpha}_{\rho})+\gamma'_{\rho}\overline{\xi}_{\rho}(f\delta^{*}_{\rho}+f\rho e\overline{\beta}_{\rho})+\delta'_{\rho}\xi_{\rho}(f\gamma^{*}_{\rho}+f\rho e\overline{\alpha}_{\rho}).\] Using the commutation relations from Lemma \ref{relsefrho}, gathering the coefficients of $e$, $f$, $e\rho f$, and $f\rho e$, noting that the index $\rho$ map from Lemma \ref{isotoC+} is a ring homomorphism, and recalling that for $\xi\in\mathbb{F} \oplus V$ we have $\xi'=\overline{\xi}$ and $\overline{\xi}'=\xi$ transforms the latter formula to the desired expression. This proves the lemma.
\end{proof}
Also here the formulae from Lemma \ref{actpara} are valid for all $\eta\in\mathcal{C}_{U,\mathbb{F}}$, but we only need it for $\eta$ in the paravector Vahlen group $\widetilde{\operatorname{V}}(V,q)$.
\begin{cor}
Assume that matrix $\binom{\alpha\ \ \beta}{\gamma\ \ \delta}$ from Lemma \ref{actpara} is in $\widetilde{\operatorname{V}}(V,q)$. Then the images of $e$, $f$, and $\iota(\xi)$ are $\iota(\alpha\overline{\gamma})+N(\alpha)e+N(\gamma)f$, $\iota(\beta\overline{\delta})+N(\beta)e+N(\delta)f$, and $\iota(\alpha\xi\overline{\delta}+\beta\overline{\xi}\overline{\gamma})+(\alpha\xi\overline{\beta}+\beta\overline{\xi}\overline{\alpha})e+(\gamma\xi\overline{\delta}+\delta\overline{\xi}\overline{\gamma})f$ respectively, all in $V_{U,\mathbb{F}}\subseteq\mathcal{C}_{U,\mathbb{F}}$. \label{paraact}
\end{cor}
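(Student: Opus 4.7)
The plan is to mirror the proof of Corollary \ref{VUact}, feeding the formulas from Lemma \ref{actpara} through Condition 1 of Theorem \ref{pvVah} so that each coefficient is recognized either as a scalar in $\mathbb{F}$ or as a paravector in $\mathbb{F}\oplus V$, and then collapsing the $e\rho f$ and $f\rho e$ pieces into a single $\iota$-image.

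First, I would treat the coefficients of $e$ and $f$. Theorem \ref{pvVah} tells us that $N(\alpha)$, $N(\beta)$, $N(\gamma)$, $N(\delta)$, as well as $\alpha\xi\overline{\beta}+\beta\overline{\xi}\overline{\alpha}$ and $\gamma\xi\overline{\delta}+\delta\overline{\xi}\overline{\gamma}$, all lie in $\mathbb{F}\subseteq\mathcal{C}_{+}$. Thus the grading involution is the identity on them and, via Lemma \ref{isotoC+}, the index-$\rho$ map leaves them fixed. The coefficients of $e$ and $f$ appearing in Lemma \ref{actpara} then reduce exactly to $N(\alpha)$, $N(\gamma)$, $N(\beta)$, $N(\delta)$, $\alpha\xi\overline{\beta}+\beta\overline{\xi}\overline{\alpha}$, and $\gamma\xi\overline{\delta}+\delta\overline{\xi}\overline{\gamma}$, which is the form asserted in the corollary.

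Next, I would treat the paravector coefficients of $e\rho f$ and $f\rho e$. Again by Condition 1 of Theorem \ref{pvVah}, the elements $\zeta:=\alpha\overline{\gamma}$, $\beta\overline{\delta}$, and $\alpha\xi\overline{\delta}+\beta\overline{\xi}\overline{\gamma}$ lie in $\mathbb{F}\oplus V$, and a direct application of the Clifford involution (inverting multiplication orders) shows that the corresponding coefficient of $f\rho e$ in each case is $\overline{\zeta}$. Since $\zeta\in\mathbb{F}\oplus V$ satisfies $\overline{\zeta}{}'=\zeta$, the expression $(\overline{\zeta})'_{\rho}$ collapses to $\zeta_{\rho}$, so the two coefficients agree after applying the index-$\rho$ map. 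The common factor $\zeta_{\rho}$ then multiplies $e\rho f+f\rho e$, and using $e\rho=-\rho e$, $f\rho=-\rho f$, together with $ef+fe=1$, this sum equals $-\rho$. Thus the combined contribution is $-\zeta_{\rho}\rho$, which is precisely $\iota(\zeta)$ by the identification in Lemma \ref{relsefrho}. This yields the three $\iota$-terms $\iota(\alpha\overline{\gamma})$, $\iota(\beta\overline{\delta})$, and $\iota(\alpha\xi\overline{\delta}+\beta\overline{\xi}\overline{\gamma})$ respectively.

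Combining, each image decomposes as a sum of scalar multiples of $e$ and $f$ with an element of $\iota(\mathbb{F}\oplus V)=V_{\mathbb{F}}\subseteq V_{U,\mathbb{F}}$, so the final vector lies in $V_{U,\mathbb{F}}$ as claimed. The only mild obstacle is bookkeeping: making sure the grading involution, transposition, and the index-$\rho$ map interact correctly, which is handled by Lemmas \ref{isotoC+} and \ref{relsefrho}, and verifying that the coefficients of $e\rho f$ and $f\rho e$ are truly Clifford conjugates—an easy check once one notes that the Clifford involution reverses multiplication and acts as $\xi\mapsto\overline{\xi}$ on the paravector $\xi$ itself.
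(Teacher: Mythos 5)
Your proposal is correct and follows essentially the same route as the paper: identify the $e$- and $f$-coefficients as scalars via Condition 1 of Theorem \ref{pvVah} (hence fixed by the grading involution and the index-$\rho$ map), show the $e\rho f$ and $f\rho e$ coefficients agree using $\overline{\psi}'=\psi$ for paravectors, and collapse the pair into $\iota(\psi)$ via Lemma \ref{relsefrho}. The only cosmetic difference is that you compute $e\rho f+f\rho e=-\rho$ and multiply by $\zeta_{\rho}$, whereas the paper writes $\psi_{\rho}=\iota(\psi)\rho$ and uses $\rho e\rho=e$, $\rho f\rho=f$; these are the same calculation in a different order.
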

Note that the three arguments of $\iota$ in Corollary \ref{paraact} are indeed in $\mathbb{F} \oplus V$ when the matrix is in $\widetilde{\operatorname{V}}(V,q)$, and this map can be applied to them.

\begin{proof}
The expressions $N(\alpha)$, $N(\beta)$, $\alpha\xi\overline{\beta}+\beta\overline{\xi}\overline{\alpha}$, $N(\gamma)$, $N(\delta)$, and $\gamma\xi\overline{\delta}+\delta\overline{\xi}\overline{\gamma}$, that one finds inside the coefficients of $e$ in $f$ in Lemma \ref{actpara}, are all in $\mathbb{F}$ by Condition 1 of Theorem \ref{pvVah}. Thus the coefficients of $f$ are invariant under the grading involution, and they are all equal to their images under the map with index $\rho$ from Lemma \ref{isotoC+}. The remaining terms are of the form $\psi_{\rho}e\rho f+\overline{\psi}'_{\rho}f\rho e$ for $\psi$ being $\alpha\overline{\gamma}$, $\beta\overline{\delta}$, or $\alpha u\overline{\delta}+\beta\overline{u}\overline{\gamma}$, all of which are in $\mathbb{F} \oplus V$. Thus $\overline{\psi}'=\psi$, and since Lemma \ref{relsefrho} expresses $\psi_{\rho}$ as $\iota(\psi)\rho$ (indeed, multiply the relation between them by $\rho$ from the right and recall that $\rho^{2}=-1$), we use the fact that $\rho e\rho=e$ and $\rho f\rho=f$ to see that $\iota(\psi)$ multiplies $ef+fe=1$. Thus indeed the images of $e$, $f$, and $\iota(\xi)$ are the asserted ones, and since to $\iota(\psi) \in V_{\mathbb{F}}$ we add scalar multiples of $e$ and $f$, these images are indeed in $V_{U,\mathbb{F}}$. This proves the corollary.
\end{proof}
Once again, the fact that the action of $\widetilde{\operatorname{V}}(V,q)$ preserves $V_{U,\mathbb{F}}$ is also a consequence of the isomorphism from Theorem \ref{pvVah} (indeed $\eta^{*}$ equals the determinant $\det\binom{\alpha\ \ \beta}{\gamma\ \ \delta}$ from Corollary \ref{SVpara} times $\eta'^{-1}$), via the definition of $\Gamma^{\mathbb{F}^{\times}}(V_{U,\mathbb{F}},q_{U,\mathbb{F}})_{+}$ in Corollary \ref{Normgrade} and Equation \eqref{twZClgrp}, but we shall require the explicit formulae from Corollary \ref{paraact}.

\smallskip

The group associated with $(V_{U,\mathbb{F}},q_{U,\mathbb{F}})$ in Equation \eqref{orthgrp} is $\operatorname{O}_{V^{\perp}}(V_{U,\mathbb{F}},q_{U,\mathbb{F}})$, because the space $V_{U,\mathbb{F}}^{\perp}$ from Equation \eqref{Vperp} is yet again $V^{\perp}$. Corollary \ref{Normgrade} shows that the group $\Gamma^{\mathbb{F}^{\times}}_{+}(V_{U,\mathbb{F}},q_{U,\mathbb{F}})$, and with it its isomorph $\widetilde{\operatorname{V}}(V,q)$ from Theorem \ref{pvVah}, surjects onto the associated special orthogonal group $\operatorname{SO}_{V^{\perp}}(V_{U,\mathbb{F}},q_{U,\mathbb{F}})$. Analogously to $\mathbf{K}_{V,q}^{c}$, we define $\widetilde{\mathbf{K}}_{V,q}^{c}$ to be those vectors $\omega \in V_{U,\mathbb{F}} \setminus V^{\perp}$ that satisfy $q_{U,\mathbb{F}}(\omega)=c$ (again, only for $c=0$ the condition $\omega \not\in V^{\perp}$ is meaningful). We recall the vector space $V_{\mathbb{F}}^{c}$ from before, and consider elements $\tau$ of $\mathbb{F} \oplus V_{\mathbb{F}}^{c}$, which we write as $\xi+t\sigma_{c}$ for $\xi\in\mathbb{F} \oplus V$ and $t\in\mathbb{F}$. The quadratic form $q_{\mathbb{F},\mathbb{F}}^{c}$ on $\mathbb{F} \oplus V_{\mathbb{F}}^{c}$ takes our $\tau$ to $q_{\mathbb{F}}(\xi)-ct^{2}$.

The correspondence between Zariski open subsets of $\widetilde{\mathbf{K}}_{V,q}^{c}$ and $\mathbb{F} \oplus V_{\mathbb{F}}^{c}$ is given in the following analogue of Lemma \ref{normc}.
\begin{lem}
The set $\widetilde{\mathbf{K}}_{V,q}^{c,o}$ of vectors $\omega\in\widetilde{\mathbf{K}}_{V,q}^{c}$ having non-zero pairing with $e$ in $V_{U,\mathbb{F}}$ are the vectors that can be written as $\omega=\frac{\iota(\xi)+f+(ct^{2}-q_{\mathbb{F}}(\xi))e}{t}$ with $\xi\in\mathbb{F} \oplus V$ and $t\in\mathbb{F}^{\times}$, where $\iota:\mathbb{F} \oplus V \to V_{\mathbb{F}}$ is the isomorphism from Lemma \ref{relsefrho}. Associating this vector with $\tau=\xi+t\sigma_{c}\in\mathbb{F} \oplus V_{\mathbb{F}}^{c}$ gives a one-to-one correspondence between $\widetilde{\mathbf{K}}_{V,q}^{c,o}$ and the complement $\widetilde{\mathbf{H}}_{V,q}^{c,o}$ of $\mathbb{F} \oplus V$ inside $\mathbb{F} \oplus V_{\mathbb{F}}^{c}$. \label{cnorm}
\end{lem}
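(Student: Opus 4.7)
The plan is to mimic the proof of Lemma \ref{normc} in the paravector setting, using the orthogonal decomposition $V_{U,\mathbb{F}}=V_{\mathbb{F}}\oplus\mathbb{F}e\oplus\mathbb{F}f$ together with the isomorphism $\iota:\mathbb{F} \oplus V \to V_{\mathbb{F}}$ from Lemma \ref{relsefrho} to shuttle between the paravector parametrization $\xi\in\mathbb{F} \oplus V$ and its image $\iota(\xi)$ inside $V_{\mathbb{F}}$. Here $\iota$ is an isometry (since $\rho^{2}=-1$ and $\rho$ is orthogonal to $V$, so $q_{\mathbb{F}}(v-a\rho)=q(v)-a^{2}=q_{\mathbb{F}}(a+v)$), and $V_{\mathbb{F}}\perp U$, so $q_{U,\mathbb{F}}$ splits additively across the three summands.

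Given $\omega\in\widetilde{\mathbf{K}}_{V,q}^{c,o}$, I would first read off its coefficient of $f$ in this orthogonal decomposition: it equals the pairing $(\omega,e)$ because $(e,e)=0$, $(e,f)=1$, and $e$ is orthogonal to $V_{\mathbb{F}}$. By hypothesis this pairing is non-zero, so write it as $1/t$ for a unique $t\in\mathbb{F}^{\times}$. Rescaling, $t\omega=\iota(\xi)+se+f$ for a unique $\xi\in\mathbb{F} \oplus V$ and $s\in\mathbb{F}$. Imposing $q_{U,\mathbb{F}}(t\omega)=ct^{2}$ and using orthogonality plus the $\iota$-isometry, the left side reduces to $q_{\mathbb{F}}(\xi)+s$, forcing $s=ct^{2}-q_{\mathbb{F}}(\xi)$ and yielding the claimed form for $\omega$. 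Conversely, any vector of that form pairs to $1/t\neq0$ with $e$ and has $q_{U,\mathbb{F}}$-value $c$ by the same computation.

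For the bijection, the pair $(\xi,t)\in(\mathbb{F} \oplus V)\times\mathbb{F}^{\times}$ is extracted uniquely from $\omega$ via the above expansion, and the same pair parametrizes $\tau=\xi+t\sigma_{c}$ inside $\mathbb{F} \oplus V_{\mathbb{F}}^{c}$ with $t\neq0$, which is precisely the description of $\widetilde{\mathbf{H}}_{V,q}^{c,o}$ as the set-theoretic complement of $\mathbb{F} \oplus V$. The remaining condition $\omega\notin V^{\perp}$ built into the definition of $\widetilde{\mathbf{K}}_{V,q}^{c}$ is automatic here: as $V^{\perp}=V_{U,\mathbb{F}}^{\perp}$ in our degenerate setup, any element of $V^{\perp}$ pairs trivially with $e\in V_{U,\mathbb{F}}$, while $(\omega,e)=1/t\neq0$. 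No step is expected to be a serious obstacle --- the entire argument is a routine paravector translation of Lemma \ref{normc}, with $\iota$ playing the only genuinely new role.
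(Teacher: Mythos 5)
Your proof is correct and follows essentially the same route as the paper's: use the orthogonal decomposition $V_{U,\mathbb{F}}=V_{\mathbb{F}}\oplus U$ with $\iota$ an isometry onto $V_{\mathbb{F}}$, normalize by the pairing $1/t$ with $e$, solve for the coefficient of $e$ from the condition $q_{U,\mathbb{F}}(\omega)=c$, and observe that the parameters $(\xi,t)$ match those of $\tau=\xi+t\sigma_{c}$. Your extra observation that the exclusion of $V^{\perp}$ is automatic (since such vectors pair trivially with $e$) is a small but correct addition that the paper leaves implicit.
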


\begin{proof}
The fact that $V_{U,\mathbb{F}}$ is the orthogonal direct sum of $U$ with the isomorph $V_{\mathbb{F}}$ of $\mathbb{F} \oplus V$ implies that if the non-zero pairing of an element $\omega$ there with $e$ is $\frac{1}{t}$ for $t\in\mathbb{F}^{\times}$ then $\omega$ is of the form $\frac{\iota(\xi)+f+se}{t}$ for $\xi\in\mathbb{F} \oplus V$ and $s\in\mathbb{F}$. Comparing the $q_{U,\mathbb{F}}$-image $\frac{q_{\mathbb{F}}(\xi)+s}{t^{2}}$ of $\omega$ with $c$ yields $s=ct^{2}-q_{\mathbb{F}}(\xi)$ as required. The bijectivity of the resulting correspondence between $\widetilde{\mathbf{K}}_{V,q}^{c,o}$ and $\widetilde{\mathbf{H}}_{V,q}^{c,o}$ is now clear, like in the proof of Lemma \ref{normc}. This proves the lemma.
\end{proof}

The M\"{o}bius transformation calculations here will require the following analogue of Lemma \ref{Ncz+d}.
\begin{lem}
Given $\tau\in\mathbb{F} \oplus V_{\mathbb{F}}^{c}$ and a matrix $\eta=\binom{\alpha\ \ \beta}{\gamma\ \ \delta}\in\widetilde{\operatorname{V}}(V,q)$, the two norms $N(\gamma\tau+\delta)$ and $N(\alpha\tau+\beta)$ lie in $\mathbb{F}$, and $(\alpha\tau+\beta)\overline{(\gamma\tau+\delta)}$ is in $\mathbb{F} \oplus V_{\mathbb{F}}^{c}$. In the case where $\tau$ is in $\widetilde{\mathbf{H}}_{V,q}^{c,o}$, the latter vector lies there as well. \label{Nctau+d}
\end{lem}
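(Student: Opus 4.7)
The proof follows the structure of Lemma \ref{Ncz+d}, with $\mathbb{F}\oplus V$ playing the role of $V$ and Condition 1 of Theorem \ref{pvVah} replacing that of Theorem \ref{Vahlen}. Writing $\tau=\xi+t\sigma_{c}$ with $\xi\in\mathbb{F}\oplus V$ and $t\in\mathbb{F}$ and working inside $\mathcal{C}(V_{\mathbb{F}}^{c},q_{\mathbb{F}}^{c})$, I would first record the basic identities: $\sigma_{c}$ anti-commutes with every $v\in V$, so the standard grading argument gives $\sigma_{c}\overline{\gamma}=\gamma^{*}\sigma_{c}$ for any $\gamma\in\mathcal{C}$ (and similarly for $\alpha$, $\beta$, $\delta$), while $\overline{\tau}=\overline{\xi}-t\sigma_{c}$ and $N(\tau)=\tau\overline{\tau}=-q_{\mathbb{F},\mathbb{F}}^{c}(\tau)$ is a central scalar.

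Expanding the first norm and splitting the cross term via these commutation rules yields
\[N(\gamma\tau+\delta)=-q_{\mathbb{F},\mathbb{F}}^{c}(\tau)N(\gamma)+(\gamma\xi\overline{\delta}+\delta\overline{\xi}\overline{\gamma})+t(\gamma\delta^{*}-\delta\gamma^{*})\sigma_{c}+N(\delta).\]
The $\sigma_{c}$-part vanishes by $\gamma\delta^{*}=\delta\gamma^{*}$ (Condition 1 of Theorem \ref{pvVah}), and the remaining summands all lie in $\mathbb{F}$ by the same condition, giving $N(\gamma\tau+\delta)\in\mathbb{F}$. The identical computation, using $\alpha\beta^{*}=\beta\alpha^{*}$, shows $N(\alpha\tau+\beta)\in\mathbb{F}$. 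For the mixed product the analogous expansion produces
\[(\alpha\tau+\beta)\overline{(\gamma\tau+\delta)}=-q_{\mathbb{F},\mathbb{F}}^{c}(\tau)\alpha\overline{\gamma}+(\alpha\xi\overline{\delta}+\beta\overline{\xi}\overline{\gamma})+t(\alpha\delta^{*}-\beta\gamma^{*})\sigma_{c}+\beta\overline{\delta},\]
in which Condition 1 places $\alpha\overline{\gamma}$, $\beta\overline{\delta}$, and $\alpha\xi\overline{\delta}+\beta\overline{\xi}\overline{\gamma}$ in $\mathbb{F}\oplus V$, so the total lies in $(\mathbb{F}\oplus V)\oplus\mathbb{F}\sigma_{c}=\mathbb{F}\oplus V_{\mathbb{F}}^{c}$.

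For the final claim, if $\tau\in\widetilde{\mathbf{H}}_{V,q}^{c,o}$ then $t\neq0$ by Lemma \ref{cnorm}, and since $\alpha\delta^{*}-\beta\gamma^{*}\in\mathbb{F}^{\times}$ is the pseudo-determinant from Corollary \ref{SVpara}, the $\sigma_{c}$-coefficient $t(\alpha\delta^{*}-\beta\gamma^{*})$ is non-zero, so $(\alpha\tau+\beta)\overline{(\gamma\tau+\delta)}$ lies outside $\mathbb{F}\oplus V$, i.e., in $\widetilde{\mathbf{H}}_{V,q}^{c,o}$. The only delicate point in the whole argument is the sign bookkeeping when moving $\sigma_{c}$ past elements of $\mathcal{C}$ and across the Clifford involution; once the rule $\sigma_{c}\overline{\gamma}=\gamma^{*}\sigma_{c}$ is in hand, the computation is a direct parallel of Lemma \ref{Ncz+d}.
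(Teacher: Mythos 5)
Your proposal is correct and follows essentially the same route as the paper: write $\tau=\xi+t\sigma_{c}$, use the commutation rule $\sigma_{c}\overline{\gamma}=\gamma^{*}\sigma_{c}$ and the centrality of $N(\tau)=-q_{\mathbb{F},\mathbb{F}}^{c}(\tau)$ to expand the three expressions, and invoke Condition 1 of Theorem \ref{pvVah} together with the non-vanishing of $\alpha\delta^{*}-\beta\gamma^{*}$ for the $\sigma_{c}$-coefficient. The expansions you obtain are exactly those in the paper's proof, so there is nothing to add.
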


\begin{proof}
We write $\tau=\xi+t\sigma_{c}$, and recall the commutation relations of $\sigma_{c}$ with elements of $\mathcal{C}$ inside $\mathcal{C}(V_{\mathbb{F}}^{c},q_{\mathbb{F}}^{c})$ and its image under Clifford involution. This, together with the fact that for $\tau\in\mathbb{F} \oplus V_{\mathbb{F}}^{c}$ we have that $N(\tau)=-q_{\mathbb{F},\mathbb{F}}^{c}(\tau)\in\mathbb{F}$ is central in $\mathcal{C}(V_{\mathbb{F}}^{c},q_{\mathbb{F}}^{c})$ and $\gamma\delta^{*}=\delta\gamma^{*}$ for $\eta\in\widetilde{\operatorname{V}}(V,q)$ via Condition 1 of Theorem \ref{pvVah}, allows us to write $N(\gamma\tau+\delta)=(\gamma\tau+\delta)\overline{(\gamma\tau+\delta)}$ as \[\gamma N(\tau)\overline{\gamma}+\gamma\xi\overline{\delta}+\delta\overline{\xi}\overline{\gamma}+t(\gamma\delta^{*}-\delta\gamma^{*})\sigma_{c}+\delta\overline{\delta}=-q_{\mathbb{F},\mathbb{F}}^{c}(\tau)N(\gamma)+ (\gamma\xi\overline{\delta}+\delta\overline{\xi}\overline{\gamma})+N(\delta),\] which is in $\mathbb{F}$ by Condition 1 of Theorem \ref{pvVah}. A similar argument shows that \[N(\alpha\tau+\beta)=-q_{\mathbb{F},\mathbb{F}}^{c}(\tau)N(\alpha)+(\alpha\xi\overline{\beta}+\beta\overline{\xi}\overline{\alpha})+N(\beta)\in\mathbb{F},\] and we also obtain that \[(\alpha\tau+\beta)\overline{(\gamma\tau+\delta)}=-q_{\mathbb{F},\mathbb{F}}^{c}(\tau)\alpha\overline{\gamma}+(\alpha\xi\overline{\delta}+\beta\overline{\xi}\overline{\gamma})+t(\alpha\delta^{*}-\beta\gamma^{*})\sigma_{c} +\beta\overline{\delta},\] with the third one being a non-zero scalar multiple of $t\sigma_{c}$ and the remaining terms lying in $\mathbb{F} \oplus V$ by Theorem \ref{pvVah}. This proves the lemma.
\end{proof}

\begin{cor}
For a matrix $\eta$ and an element $\tau=\xi+t\sigma_{c}$ as in Lemma \ref{cnorm}, the $\mathbb{F} \oplus V$-part of the paravector $(\alpha\tau+\beta)\overline{(\gamma\tau+\delta)}\in\mathbb{F} \oplus V_{\mathbb{F}}^{c}$ has $q_{\mathbb{F}}$-value $ct^{2}\det^{2}\eta-N(\alpha\tau+\beta)N(\gamma\tau+\delta)$, where $\det\eta$ is the scalar $\alpha\delta^{2}-\beta\gamma^{*}\in\mathbb{F}^{\times}$ from Theorem \ref{pvVah} and Corollary \ref{SVpara}. \label{VqFMoeb}
\end{cor}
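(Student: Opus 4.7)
The plan is to mimic directly the proof of Corollary \ref{MoebVq}, replacing $V$ by $\mathbb{F}\oplus V$, the quadratic form $q$ by $q_{\mathbb{F}}$, and the space $V_{\mathbb{F}}^{c}$ by its one-dimensional extension $\mathbb{F}\oplus V_{\mathbb{F}}^{c}$. Concretely, first I would observe that since $(\alpha\tau+\beta)\overline{(\gamma\tau+\delta)}\in\mathbb{F}\oplus V_{\mathbb{F}}^{c}$ by Lemma \ref{Nctau+d}, its $q_{\mathbb{F},\mathbb{F}}^{c}$-value equals minus its Clifford norm.

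Next, because $N(\gamma\tau+\delta)\in\mathbb{F}$ and $N(\alpha\tau+\beta)\in\mathbb{F}$ by Lemma \ref{Nctau+d}, the scalar and hence central nature of these norms lets me invoke the multiplicativity argument from the proof of Lemma \ref{NinFx}: the norm of the product $(\alpha\tau+\beta)\overline{(\gamma\tau+\delta)}$ factors as $N(\alpha\tau+\beta)\cdot N\bigl(\overline{\gamma\tau+\delta}\bigr)$, and by Remark \ref{invV} (applied to the paravector Vahlen group via Theorem \ref{pvVah}) the latter equals $N(\gamma\tau+\delta)$. Hence the $q_{\mathbb{F},\mathbb{F}}^{c}$-image of our vector is $-N(\alpha\tau+\beta)N(\gamma\tau+\delta)$.

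Then I would extract the $\mathbb{F}\oplus V$-part from the explicit decomposition computed in the proof of Lemma \ref{Nctau+d}: the vector $(\alpha\tau+\beta)\overline{(\gamma\tau+\delta)}$ splits as the sum of $-q_{\mathbb{F},\mathbb{F}}^{c}(\tau)\alpha\overline{\gamma}+(\alpha\xi\overline{\delta}+\beta\overline{\xi}\overline{\gamma})+\beta\overline{\delta}$, lying in $\mathbb{F}\oplus V$, plus the orthogonal piece $t(\alpha\delta^{*}-\beta\gamma^{*})\sigma_{c}=t\det\eta\cdot\sigma_{c}$. Since $\mathbb{F}\oplus V$ and $\mathbb{F}\sigma_{c}$ are orthogonal summands in $\mathbb{F}\oplus V_{\mathbb{F}}^{c}$, the quadratic form is additive on this decomposition, so $q_{\mathbb{F}}$ of the $\mathbb{F}\oplus V$-part equals $q_{\mathbb{F},\mathbb{F}}^{c}\bigl((\alpha\tau+\beta)\overline{(\gamma\tau+\delta)}\bigr)-q_{\mathbb{F},\mathbb{F}}^{c}(t\det\eta\cdot\sigma_{c})$. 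Using $q_{\mathbb{F},\mathbb{F}}^{c}(\sigma_{c})=-c$ and the previous step gives the asserted value $ct^{2}\det^{2}\eta-N(\alpha\tau+\beta)N(\gamma\tau+\delta)$.

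The only real obstacle here is the multiplicativity of the norm, which in the non-paravector setting was packaged into Corollary \ref{MoebVq} via a single sentence; here one must check that the argument still goes through in $\mathcal{C}(V_{\mathbb{F}}^{c},q_{\mathbb{F}}^{c})$, but since both factors $N(\alpha\tau+\beta)$ and $N(\gamma\tau+\delta)$ sit in the central subfield $\mathbb{F}$ by Lemma \ref{Nctau+d}, the same reasoning as in Lemma \ref{NinFx} applies verbatim. Everything else is the same orthogonal-decomposition bookkeeping as in Corollary \ref{MoebVq}.
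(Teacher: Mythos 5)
Your proposal is correct and follows essentially the same route as the paper's own proof: reduce the $q_{\mathbb{F},\mathbb{F}}^{c}$-value of $(\alpha\tau+\beta)\overline{(\gamma\tau+\delta)}$ to $-N(\alpha\tau+\beta)N(\gamma\tau+\delta)$ via multiplicativity of scalar norms and Remark \ref{invV}, then subtract the contribution $-ct^{2}\det^{2}\eta$ of the orthogonal summand $t\det\eta\cdot\sigma_{c}$ coming from the decomposition in Lemma \ref{Nctau+d}. No gaps.
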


\begin{proof}
As in the proof of Corollary \ref{MoebVq}, paravectors share the property of vectors that their quadratic values are minus their norms, which shows (by invoking Remark \ref{invV} again) that $q_{\mathbb{F},\mathbb{F}}^{c}\big((\alpha\tau+\beta)\overline{(\gamma\tau+\delta)}\big)=-N(\alpha\tau+\beta)N(\gamma\tau+\delta)$. Writing this paravector vector, via the proof of Lemma \ref{cnorm}, as its $\mathbb{F} \oplus V$-part plus $t\det\eta\cdot\sigma_{c}$, we deduce that the $q_{\mathbb{F}}$-value (or equivalently the $q_{\mathbb{F},\mathbb{F}}^{c}$-value) of the former, is difference between that of $(\alpha\tau+\beta)\overline{(\gamma\tau+\delta)}$ and the $q_{\mathbb{F},\mathbb{F}}^{c}$-value $ct^{2}\det^{2}\eta$ of $t\det\eta\cdot\sigma_{c}$ (as $q_{\mathbb{F},\mathbb{F}}^{c}(\sigma_{c})=q_{\mathbb{F}}^{c}(\sigma_{c})=-c$). This proves the corollary.
\end{proof}

The regular part of the M\"{o}bius transformation formula here is as follows.
\begin{prop}
Take $\omega\in\widetilde{\mathbf{K}}_{V,q}^{c,o}$ and $\tau\in\mathbf{\widetilde{H}}_{V,q}^{c,o}$ that are related by Lemma \ref{cnorm}, as well as a matrix $\binom{\alpha\ \ \beta}{\gamma\ \ \delta}\in\widetilde{\operatorname{V}}(V,q)$ for which the norm $N(\gamma z+\delta)$ from Lemma \ref{Nctau+d} is non-zero, and let $\eta\in\Gamma^{\mathbb{F}^{\times}}(V_{U,\mathbb{F}},q_{U,\mathbb{F}})_{+}$ be the element corresponding to this matrix in Theorem \ref{pvVah}. Then if $\pi_{U,\mathbb{F}}$ is the map from Theorem \ref{GammaO} that is associated with $(V_{U,\mathbb{F}},q_{U,\mathbb{F}})$, then $\pi_{U,\mathbb{F}}(\eta)\in\operatorname{SO}_{V^{\perp}}(V_{U,\mathbb{F}},q_{U,\mathbb{F}})$ takes $\omega$ to the element of $\mathbf{K}_{V,q}^{c,o}$ that is associated with $(\alpha\tau+\beta)(\gamma\tau+\delta)^{-1}\in\mathbf{H}_{V,q}^{c,o}$. \label{Moebreg}
\end{prop}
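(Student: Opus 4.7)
The plan is to run the same calculation as in Proposition \ref{regMoeb}, but keeping careful track of the paravector structure via the isomorphism $\iota$ from Lemma \ref{relsefrho}. First I would observe that the action of $\pi_{U,\mathbb{F}}(\eta)$ on $\omega$ equals $\eta\omega\eta'^{-1}$, and that exactly as in the proof of Proposition \ref{regMoeb}, the relation $\eta^{*}=\det\eta\cdot\eta'^{-1}$ from Remark \ref{invV} and Corollary \ref{SVpara} (combined with the invariance of the scalar $\det\eta$ under the grading involution) reduces the task to computing $\eta\omega\eta^{*}/\det\eta$. Using Lemma \ref{cnorm} to write $\omega=\bigl[\iota(\xi)+f+(ct^{2}-q_{\mathbb{F}}(\xi))e\bigr]/t$, linearity lets me evaluate $\eta\omega\eta^{*}$ term by term via the three formulas from Corollary \ref{paraact}.

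The next step is to assemble the coefficients of $e$, $f$, and the $\iota$-image of an $\mathbb{F}\oplus V$ element. Substituting the identities from Corollary \ref{paraact}, the coefficient of $f$ in $\eta\omega\eta^{*}$ becomes
\[
(\gamma\xi\overline{\delta}+\delta\overline{\xi}\overline{\gamma})+N(\delta)+(ct^{2}-q_{\mathbb{F}}(\xi))N(\gamma),
\]
and since $q_{\mathbb{F}}^{c}_{,\mathbb{F}}(\tau)=q_{\mathbb{F}}(\xi)-ct^{2}$, this is precisely the scalar $N(\gamma\tau+\delta)$ given by Lemma \ref{Nctau+d}. The same manipulation, with $\alpha,\beta$ in place of $\gamma,\delta$, identifies the coefficient of $e$ as $N(\alpha\tau+\beta)$. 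For the $\iota$-part, the three contributions $\alpha\xi\overline{\delta}+\beta\overline{\xi}\overline{\gamma}$, $\beta\overline{\delta}$, and $(ct^{2}-q_{\mathbb{F}}(\xi))\alpha\overline{\gamma}$ combine, by the same formula from Lemma \ref{Nctau+d}, into the $\mathbb{F}\oplus V$-part of $(\alpha\tau+\beta)\overline{(\gamma\tau+\delta)}$; more precisely, Lemma \ref{Nctau+d} writes $(\alpha\tau+\beta)\overline{(\gamma\tau+\delta)}$ as its $\mathbb{F}\oplus V$-part plus $t\det\eta\cdot\sigma_{c}$, so after applying $\iota$ (which is the identity on $V$ and sends the scalar piece into $\mathbb{F}\rho$) one recovers exactly what shows up in $\eta\omega\eta^{*}$.

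Having computed $\eta\omega\eta^{*}$, I divide by $t\det\eta$ to get $\pi_{U,\mathbb{F}}(\eta)(\omega)$. Since $N(\gamma\tau+\delta)\neq0$ by assumption, the coefficient of $f$ in the result is nonzero; renormalising by that coefficient (the inverse of $t\det\eta/N(\gamma\tau+\delta)$) one can read off the presentation demanded by Lemma \ref{cnorm}: the $\iota$-part becomes the $\mathbb{F}\oplus V$-part of $(\alpha\tau+\beta)\overline{(\gamma\tau+\delta)}/N(\gamma\tau+\delta)=(\alpha\tau+\beta)(\gamma\tau+\delta)^{-1}$, and the $\sigma_{c}$-coefficient of the latter element of $\widetilde{\mathbf{H}}_{V,q}^{c,o}$ is $t\det\eta/N(\gamma\tau+\delta)$, matching the coefficient of $f$. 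By the bijectivity in Lemma \ref{cnorm} this identifies $\pi_{U,\mathbb{F}}(\eta)(\omega)$ with the claimed element.

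The main obstacle I expect is purely bookkeeping: one must repeatedly use Lemma \ref{relsefrho} to move $\rho$ past elements of $\mathcal{C}$ (and to turn $\iota(\psi)$ into $\psi_{\rho}\rho^{-1}$ or vice versa), and one must be careful that the index $\rho$ map from Lemma \ref{isotoC+}, which appears in Corollary \ref{paraact}, is precisely what translates the raw output of the action into expressions in $\mathbb{F}\oplus V$ via $\iota$. Once the dictionary between the $\iota$-picture and the index-$\rho$ picture is set up, the algebra is formally identical to that of Proposition \ref{regMoeb}, and no new conceptual difficulty arises.
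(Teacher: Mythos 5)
Your proposal is correct and follows essentially the same route as the paper's own proof: reduce to $\eta\omega\eta^{*}/\det\eta$, expand via Corollary \ref{paraact}, identify the $e$- and $f$-coefficients with $N(\alpha\tau+\beta)$ and $N(\gamma\tau+\delta)$ and the $V_{\mathbb{F}}$-part with the $\iota$-image of the $\mathbb{F}\oplus V$-part of $(\alpha\tau+\beta)\overline{(\gamma\tau+\delta)}$ via Lemma \ref{Nctau+d}, then renormalise by the non-vanishing $f$-coefficient to match the presentation of Lemma \ref{cnorm}. The bookkeeping with $\iota$ and the index-$\rho$ map that you flag as the main obstacle is handled exactly as you anticipate.
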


\begin{proof}
Like in the proof of Proposition \ref{regMoeb}, we get $\pi_{U,\mathbb{F}}(\eta)(\omega)=\eta\omega\eta'^{-1}=\frac{\eta\omega\eta^{*}}{\det\eta}$ with $\det\eta$ being $\alpha\delta^{*}-\beta\gamma^{*}$ as in Theorem \ref{pvVah} and Corollary \ref{SVpara}, and with $\omega$ as in Lemma \ref{cnorm} this vector is $\eta\big[\iota(\xi)+f+\big(ct^{2}-q_{\mathbb{F}}(\xi)\big)e\big]\eta^{*}$ divided by the scalar $t\det\eta=t(\alpha\delta^{*}-\beta\gamma^{*})$. We evaluate the numerator, via Corollary \ref{paraact}, as \[\iota\big[(\alpha\xi\overline{\delta}+\beta\overline{\xi}\overline{\gamma})+\beta\overline{\delta}+\big(ct^{2}-q_{\mathbb{F}}(\xi)\big)\alpha\overline{\gamma}\big]+\big[(\alpha\xi\overline{\beta}+ \beta\overline{\xi}\overline{\alpha})+N(\beta)+\big(ct^{2}-q_{\mathbb{F}}(\xi)\big)N(\alpha)\big]e+\] \[+\big[(\gamma\xi\overline{\delta}+\delta\overline{\xi}\overline{\gamma})+N(\delta)+\big(ct^{2}-q_{\mathbb{F}}(\xi)\big)N(\gamma)\big]f,\] where the coefficient $ct^{2}-q_{\mathbb{F}}(\xi)$ can also be written as $-q_{\mathbb{F},\mathbb{F}}^{c}(\tau)$ for the associated element $\tau=\xi+t\sigma_{c}\in\mathbf{\widetilde{H}}_{V,q}^{c,o}$ from Lemma \ref{cnorm}. Using Lemma \ref{Nctau+d}, the coefficient of $f$ here is $N(\gamma\tau+\delta)$, and the $V_{\mathbb{F}}$-part of our expression is the $\iota$-image of the $\mathbb{F} \oplus V$-part of $(\alpha\tau+\beta)\overline{(\gamma\tau+\delta)}$.

It follows that the image of $\omega\in\widetilde{\mathbf{K}}_{V,q}^{c,o}$ under $\pi_{U,\mathbb{F}}(\eta)$ is in $\widetilde{\mathbf{K}}_{V,q}^{c,o}$, with the inverse of the coefficient of $f$ being $\frac{t(\alpha\delta^{*}-\beta\gamma^{*})}{N(\gamma\tau+\delta)}$, and multiplying the $V_{\mathbb{F}}$-part part of $\pi_{U,\mathbb{F}}(\eta)(\omega)$ by this scalar gives $\frac{\iota[(\alpha\tau+\beta)\overline{(\gamma\tau+\delta)}-t(\alpha\delta^{*}-\beta\gamma^{*})\sigma_{c}]}{N(\gamma\tau+\delta)}$. But our M\"{o}bius expression $(\alpha\tau+\beta)(\gamma\tau+\delta)^{-1}\in\mathbf{H}_{V,q}^{c,o}$ is $(\alpha\tau+\beta)\overline{(\gamma\tau+\delta)}$ divided by $N(\gamma\tau+\delta)$, in which the coefficient of $\sigma_{c}$ is the former quotient, and whose $\mathbb{F} \oplus V$-part becomes, after applying $\iota$, the latter quotient, making it the element corresponding via Lemma \ref{cnorm} to $\pi_{U,\mathbb{F}}(\eta)(\omega)$. This proves the proposition.
\end{proof}

\begin{rmk}
Similarly to Remark \ref{expcal}, the coefficient of $e$ in the numerator in the proof of Proposition \ref{Moebreg} is $N(\alpha\tau+\beta)$, Corollary \ref{VqFMoeb} implies that the $q_{U,\mathbb{F}}$-value of the numerator is $ct^{2}\det^{2}\eta$ without the applying orthogonality of $\pi_{U,\mathbb{F}}(\eta)$, and Theorem \ref{pvVah} and Remark \ref{invV} can give this value by direct calculations. \label{calexp}
\end{rmk}

\smallskip

Also here it may happen that for some $c\neq0$, there is no vector in $V_{\mathbb{F}}$ having $q_{\mathbb{F}}$-value $c$, and then $\widetilde{\mathbf{K}}_{V,q}^{c}=\widetilde{\mathbf{K}}_{V,q}^{c,o}$ maps bijectively to $\widetilde{\mathbf{K}}_{V,q}^{c,o}$ via Lemma \ref{cnorm}, and Proposition \ref{Moebreg} already describes the entire action of $\operatorname{SO}_{V^{\perp}}(V_{U,\mathbb{F}},q_{U,\mathbb{F}})$ (or $\Gamma^{\mathbb{F}^{\times}}(V_{U,\mathbb{F}},q_{U,\mathbb{F}})_{+}$, or $\widetilde{\operatorname{V}}(V,q)$). This happens, for example, if $\mathbb{F}=\mathbb{R}$, $(V,q)$ is negative definite, and $c=1$, where the restriction of our Proposition \ref{Moebreg} to $\widetilde{\operatorname{SV}}(V,q)$ reproduces Propositions 5.1 and 5.3 of \cite{[EGM]}. But if $V_{\mathbb{F}}$ represents $c$ then $\widetilde{\mathbf{K}}_{V,q}^{c,o}\subsetneq\widetilde{\mathbf{K}}_{V,q}^{c}$, and we have to complete $\widetilde{\mathbf{H}}_{V,q}^{c,o}$ by adding appropriate boundary points to $\mathbb{F} \oplus V_{\mathbb{F}}^{c}$ and include them for obtaining the space $\widetilde{\mathbf{H}}_{V,q}^{c}$. This is carried out in the following analogue of Definition \ref{boundary}.
\begin{defn}
Let $\lambda$ be a paravector in $\mathbb{F} \oplus V$ with $q_{\mathbb{F}}(\lambda)=c$, and take $b\in\mathbb{F}$, such that if $c=0$ and $\xi \in V^{\perp}$ then $b\neq0$. Then the corresponding \emph{boundary point} $(\infty\lambda)_{b}+\infty\sigma_{c}$ of $\mathbb{F} \oplus V_{\mathbb{F}}^{c}$ satisfies the property that the value of $\infty\lambda$ under the extension of $q_{\mathbb{F}}$ is $\infty^{2}c-\infty b$. We denote the set of all boundary point of $\mathbb{F} \oplus V_{\mathbb{F}}^{c}$ by $\partial(\mathbb{F} \oplus V_{\mathbb{F}}^{c})$, the completion of $\mathbb{F} \oplus V_{\mathbb{F}}^{c}$ by adding these boundary points by $\overline{\mathbb{F} \oplus V}_{\mathbb{F}}^{c}$, and the complement $\widetilde{\mathbf{H}}_{V,q}^{c,o}\cup\partial(\mathbb{F} \oplus V_{\mathbb{F}}^{c})$ of $\mathbb{F} \oplus V$ inside $\overline{\mathbb{F} \oplus V}_{\mathbb{F}}^{c}$ by $\widetilde{\mathbf{H}}_{V,q}^{c}$. \label{bdpara}
\end{defn}
Also here any topology on $\mathbb{F} \oplus V_{\mathbb{F}}^{c}$ extends naturally to $\overline{\mathbb{F} \oplus V}_{\mathbb{F}}^{c}$, with the points near a boundary element $(\infty\lambda)_{b}+\infty\sigma_{c}$ being those $\tau=\xi+t\sigma_{c}$ with $t$ near $\infty$ in $\mathbb{P}^{1}(\mathbb{F})$ and the pairing $(t\lambda-\xi,\lambda)_{\mathbb{F}}$ from Equation \eqref{parabil} being near $b$, as well as those $(\infty\mu)_{a}+\infty\sigma_{c}$ in which $\mu\in\mathbb{F} \oplus V$ is near $\lambda$ and satisfies $q_{\mathbb{F}}(\mu)=c$, and $a$ is near $b$. The fact that for $\lambda$ and $\xi$ $\mathbb{F} \oplus V$ with $q_{\mathbb{F}}(\mu)=c$, the limit of $t\lambda-\xi+t\sigma_{c}$ as $t\to\infty$ is $(\infty\lambda)_{(\lambda,\xi)_{\mathbb{F}}}+\infty\sigma_{c}\in\partial(\mathbb{F} \oplus V_{\mathbb{F}}^{c})$ is valid here as well, and so are the openness of $\mathbb{F} \oplus V_{\mathbb{F}}^{c}$ and $\widetilde{\mathbf{H}}_{V,q}^{c}$ in $\overline{\mathbb{F} \oplus V}_{\mathbb{F}}^{c}$ and that of $\widetilde{\mathbf{H}}_{V,q}^{c,o}$ in the latter three sets when $\mathbb{F}$ carries a Hausdorff topology.

The extension of Lemma \ref{cnorm} to $\widetilde{\mathbf{K}}_{V,q}^{c}$ by mapping the elements in the complement of $\widetilde{\mathbf{K}}_{V,q}^{c,o}$ to $\partial(\mathbb{F} \oplus V_{\mathbb{F}}^{c})\subseteq\widetilde{\mathbf{H}}_{V,q}^{c}$ is as follows.
\begin{lem}
If $\omega\in\widetilde{\mathbf{K}}_{V,q}^{c}$ is orthogonal to $e$ in $V_{U,\mathbb{F}}$ then it equals $\iota(\lambda)+be$ for a paravector $\lambda\in\mathbb{F} \oplus V$ with $q_{\mathbb{F}}(\lambda)=c$ and $b\in\mathbb{F}$, where if $c=0$ and $\lambda \in V^{\perp}$ then $b\neq0$. Associating our $\omega$ with the element $(\infty\lambda)_{b}+\infty\sigma_{c}\in\partial(\mathbb{F} \oplus V_{\mathbb{F}}^{c})$ yields a one-to-one between $\widetilde{\mathbf{K}}_{V,q}^{c}\setminus\widetilde{\mathbf{K}}_{V,q}^{c,o}$ and $\partial(\mathbb{F} \oplus V_{\mathbb{F}}^{c})$, and with Lemma \ref{cnorm} we get a one-to-one correspondence between $\widetilde{\mathbf{K}}_{V,q}^{c}$ and $\widetilde{\mathbf{H}}_{V,q}^{c}$. \label{normbd}
\end{lem}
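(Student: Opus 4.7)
The plan is to mimic the proof of Lemma \ref{bdnorm}, with the isometry $\iota:\mathbb{F} \oplus V \to V_{\mathbb{F}}$ from Lemma \ref{relsefrho} playing the role formerly played by the inclusion $V \hookrightarrow V_{U}$. First I would use the orthogonal decomposition $V_{U,\mathbb{F}} = V_{\mathbb{F}} \oplus U$, with $U = \mathbb{F}e \oplus \mathbb{F}f$ the hyperbolic plane: since $(e,e)=0$ and $(e,f)=1$, an element of $V_{U,\mathbb{F}}$ pairs to zero with $e$ if and only if its $f$-component vanishes, i.e., it lies in $V_{\mathbb{F}} \oplus \mathbb{F}e$. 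Writing such an $\omega$ uniquely as $\iota(\lambda)+be$ with $\lambda\in\mathbb{F} \oplus V$ and $b\in\mathbb{F}$, the orthogonality of the decomposition together with $q_{U}(be)=0$ gives $q_{U,\mathbb{F}}(\omega) = q_{\mathbb{F}}(\iota(\lambda)) = q_{\mathbb{F}}(\lambda)$, where the second equality uses that $\iota$ is an isometry. The membership condition $\omega\in\widetilde{\mathbf{K}}_{V,q}^{c}$ thus becomes the constraint $q_{\mathbb{F}}(\lambda)=c$.

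Next I would handle the $V^{\perp}$-exclusion clause. Since $\iota$ restricts to the identity on $V\subseteq\mathbb{F} \oplus V$ (as $\iota(0+v)=v-0\cdot\rho=v$), and $V^{\perp}\subseteq V$ sits inside $V_{U,\mathbb{F}}$ as the common kernel appearing in Equation \eqref{Vperp}, the vector $\omega = \iota(\lambda)+be$ lies in $V^{\perp}$ precisely when $b=0$ and $\lambda\in V^{\perp}\subseteq\mathbb{F} \oplus V$ (forcing the scalar part of $\lambda$ to vanish as well, so that $\iota(\lambda)$ has no $\rho$-component). When $c\neq0$, having $\lambda\in V^{\perp}$ would force $q_{\mathbb{F}}(\lambda)=0\neq c$, so the exclusion is automatic; when $c=0$, $\lambda\in V^{\perp}$ automatically satisfies $q_{\mathbb{F}}(\lambda)=0=c$, and excluding $\omega\in V^{\perp}$ amounts exactly to requiring $b\neq 0$ in this case. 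These are precisely the conditions on $(\lambda,b)$ parametrizing boundary points in Definition \ref{bdpara}.

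Finally, the assignment $\omega\mapsto(\infty\lambda)_{b}+\infty\sigma_{c}$ is now visibly a bijection between $\widetilde{\mathbf{K}}_{V,q}^{c}\setminus\widetilde{\mathbf{K}}_{V,q}^{c,o}$ and $\partial(\mathbb{F} \oplus V_{\mathbb{F}}^{c})$, and joining it with the bijection from Lemma \ref{cnorm} on the complementary pieces $\widetilde{\mathbf{K}}_{V,q}^{c,o}$ and $\widetilde{\mathbf{H}}_{V,q}^{c,o}$ produces the claimed correspondence between $\widetilde{\mathbf{K}}_{V,q}^{c}$ and $\widetilde{\mathbf{H}}_{V,q}^{c}$. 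The only mildly delicate point is the bookkeeping for the $V^{\perp}$-clause when $c=0$, but it matches on both sides by construction of Definition \ref{bdpara}; the rest of the argument is an immediate translation of Lemma \ref{bdnorm} via the isometry $\iota$.
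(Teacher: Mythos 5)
Your proposal is correct and follows essentially the same route as the paper's proof: identify the elements pairing to zero with $e$ as $\iota(\mathbb{F}\oplus V)\oplus\mathbb{F}e$, use that $\iota$ is an isometry to reduce $q_{U,\mathbb{F}}(\omega)=c$ to $q_{\mathbb{F}}(\lambda)=c$, and match the $V^{\perp}$-exclusion with the parameters of Definition \ref{bdpara} exactly as in Lemma \ref{bdnorm}. The extra care you take with the $c=0$ bookkeeping is consistent with what the paper leaves implicit.
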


\begin{proof}
The elements of $V_{U,\mathbb{F}}$ that are orthogonal to $e$ are precisely those in the direct sum of $V_{\mathbb{F}}=\iota(\mathbb{F} \oplus V)$ and $\mathbb{F}e$, and for $\omega=\iota(\lambda)+be$ in that space we have $q_{U,\mathbb{F}}(\omega)=q_{\mathbb{F}}(\lambda)$. Thus $\omega\in\widetilde{\mathbf{K}}_{V,q}^{c}$ if and only if $q_{\mathbb{F}}(\lambda)=c$, and excluding $V^{\perp}$ (with $b=0$) for $c=0$ yields the first assertion. The remaining parts are proved like in Lemma \ref{bdnorm}, using the parameters from Definition \ref{bdpara}. This proves the lemma.
\end{proof}

We can now consider paravector M\"{o}bius transformations with a vanishing denominator.
\begin{prop}
Consider $\omega\in\widetilde{\mathbf{K}}_{V,q}^{c,o}$, with associated $\tau\in\widetilde{\mathbf{H}}_{V,q}^{c,o}$ via Lemma \ref{cnorm}, and take $\eta\in\Gamma^{\mathbb{F}^{\times}}(V_{U,\mathbb{F}},q_{U,\mathbb{F}})$ such that the associated matrix $\binom{\alpha\ \ \beta}{\gamma\ \ \delta}$ in $\widetilde{\operatorname{V}}(V,q)$ satisfies $N(\gamma\tau+\delta)=0$. We define $(\alpha\tau+\beta)(\gamma\tau+\delta)^{-1}$ to be the element $(\infty\lambda)_{b}+\infty\sigma_{c}$ from Definition \ref{bdpara}, where $\lambda$ is the $\mathbb{F} \oplus V$-part of $(\alpha\tau+\beta)\overline{(\gamma\tau+\delta)}$ divided by $t\det\eta$, and $b$ equals $\frac{N(\alpha\tau+\beta)}{t\det\eta}$. Then Lemma \ref{normbd} associates this vector to $\pi_{U,\mathbb{F}}(\eta)(\omega)$, and the resulting extension of the M\"{o}bius transformation is continuous on $\widetilde{\mathbf{H}}_{V,q}^{c,o}$ wherever this notion makes sense. \label{denomvan}
\end{prop}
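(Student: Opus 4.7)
The plan is to mirror the structure of the proof of Proposition \ref{denom0}, replacing the use of Corollary \ref{VUact} by Corollary \ref{paraact} and of Corollary \ref{MoebVq} by Corollary \ref{VqFMoeb}, and substituting the isomorph $\iota$ of Lemma \ref{relsefrho} wherever vectors from $V$ were used in the previous proof.

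First I would compute $\pi_{U,\mathbb{F}}(\eta)(\omega)=\eta\omega\eta'^{-1}=\frac{\eta\omega\eta^{*}}{\det\eta}$, using the fact that $\eta^{*}=\det\eta\cdot\eta'^{-1}$ as in the proof of Proposition \ref{Moebreg}, and expanding $\omega=\frac{\iota(\xi)+f+(ct^{2}-q_{\mathbb{F}}(\xi))e}{t}$ via Lemma \ref{cnorm}. Applying Corollary \ref{paraact} to the numerator gives an expression with a $V_{\mathbb{F}}$-part that is the $\iota$-image of the $\mathbb{F}\oplus V$-part of $(\alpha\tau+\beta)\overline{(\gamma\tau+\delta)}$, together with coefficients of $e$ and $f$, and by the calculation from the proof of Proposition \ref{Moebreg} (together with Lemma \ref{Nctau+d} and Remark \ref{calexp}), these coefficients can be identified, up to division by $t\det\eta$, with $N(\alpha\tau+\beta)$ and $N(\gamma\tau+\delta)$ respectively.

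Next I would impose the hypothesis $N(\gamma\tau+\delta)=0$, causing the coefficient of $f$ in $\frac{\eta\omega\eta^{*}}{\det\eta}$ to vanish. What remains is then of the form $\iota(\lambda)+be$ with $\lambda$ and $b$ exactly as specified in the statement, so Lemma \ref{normbd} already ensures that this expression lies in $\widetilde{\mathbf{K}}_{V,q}^{c}\setminus\widetilde{\mathbf{K}}_{V,q}^{c,o}$ and corresponds to $(\infty\lambda)_{b}+\infty\sigma_{c}$ provided that $q_{\mathbb{F}}(\lambda)=c$ (and that we do not fall into the forbidden case $\lambda\in V^{\perp}$ with $b=0$ when $c=0$). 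The identity $q_{\mathbb{F}}(\lambda)=c$ is precisely what Corollary \ref{VqFMoeb} yields after dividing by $(t\det\eta)^{2}$, since the term $N(\alpha\tau+\beta)N(\gamma\tau+\delta)$ there drops out by our vanishing assumption. The $V^{\perp}$ exclusion follows because $\pi_{U,\mathbb{F}}(\eta)$ preserves $V^{\perp}$ (by Theorem \ref{GammaO}) and $\omega\not\in V^{\perp}$ since it has non-zero pairing with $e$.

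For continuity, I would write the general regular image coming out of Proposition \ref{Moebreg} as the vector $\iota(\lambda)+\tfrac{N(\alpha\tau+\beta)}{t\det\eta}e+\tfrac{N(\gamma\tau+\delta)}{t\det\eta}f$ (now with no vanishing assumption), observe that in the topology on $\overline{\mathbb{F}\oplus V}_{\mathbb{F}}^{c}$ described after Definition \ref{bdpara} the value we should interpret as $\infty$ is $\tfrac{t\det\eta}{N(\gamma\tau+\delta)}$, and check that as $N(\gamma\tau+\delta)\to0$ our formula for $q_{\mathbb{F}}(\lambda)$ from Corollary \ref{VqFMoeb} yields $\infty^{2}c-\infty b$ with the stated $b$, matching Definition \ref{bdpara}. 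Continuity in any of the admissible topologies is then immediate from the explicit dependence of $\lambda$ and $b$ on the entries of the matrix and on $\tau$.

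The main obstacle, as expected, is purely bookkeeping: one has to ensure that the intertwining with $\iota$ is applied consistently, that the norm computations from Lemma \ref{Nctau+d} land in $\mathbb{F}$ exactly where needed for the coefficients of $e$ and $f$ to be scalars, and that the exclusion condition in Definition \ref{bdpara} (when $c=0$) is indeed enforced by the preservation of $V^{\perp}$ under $\pi_{U,\mathbb{F}}(\eta)$; once these are verified the conclusion is essentially the paravector transcription of Proposition \ref{denom0}.
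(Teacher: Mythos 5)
Your proposal is correct and follows essentially the same route as the paper's own proof: compute $\frac{\eta\omega\eta^{*}}{\det\eta}$ via Corollary \ref{paraact} and the proof of Proposition \ref{Moebreg}, identify the coefficients of $e$ and $f$ with $\frac{N(\alpha\tau+\beta)}{t\det\eta}$ and $\frac{N(\gamma\tau+\delta)}{t\det\eta}$, invoke Corollary \ref{VqFMoeb} to get $q_{\mathbb{F}}(\lambda)=c$ once the denominator vanishes, interpret $\frac{t\det\eta}{N(\gamma\tau+\delta)}$ as the value of $\infty$ for continuity, and handle the $V^{\perp}$ exclusion through the preservation of $V^{\perp}$ by $\pi_{U,\mathbb{F}}(\eta)$. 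No gaps.
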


\begin{proof}
Using the notation $\lambda$, the proof of Proposition \ref{Moebreg} gives, via Remark \ref{calexp}, that $\pi_{U,\mathbb{F}}(\eta)(\omega)=\frac{\eta\omega\eta^{*}}{\det\eta}$ equals $\iota(\lambda)+\frac{N(\alpha\tau+\beta)}{t\det\eta}e+\frac{N(\gamma\tau+\delta)}{t\det\eta}f$, and Corollary \ref{VqFMoeb} yields $q_{\mathbb{F}}(\lambda)=c-\frac{N(\gamma\tau+\delta)}{t\det\eta}\cdot\frac{N(\alpha\tau+\beta)}{t\det\eta}$. The coefficient of $\sigma_{c}$ in $(\alpha\tau+\beta)(\gamma\tau+\delta)^{-1}$ was $\frac{t\det\eta}{N(\gamma\tau+\delta)}$ when the denominator was non-zero, so this would be $\infty$ in case $N(\gamma\tau+\delta)$, and the extension of $q_{\mathbb{F}}$ takes $\infty\lambda$ to $\infty^{2}c-\infty b$ with the asserted $b$, giving continuity again. The equality $\pi_{U,\mathbb{F}}(\eta)(\omega)=\iota(\lambda)+be$, Lemma \ref{normbd}, and the exclusion of $V^{\perp}$ when $c=0$ yield the rest like in the proof of Proposition \ref{denom0}. This proves the proposition.
\end{proof}

\smallskip

The linearity in $\infty$ on $\partial(\mathbb{F} \oplus V_{\mathbb{F}}^{c})$, like in Lemma \ref{linininf}, holds here as well.
\begin{lem}
Given $\omega=(\infty\lambda)_{b}+\infty\sigma_{c}\in\partial(\mathbb{F} \oplus V_{\mathbb{F}}^{c})$ and $\eta=\binom{\alpha\ \ \beta}{\gamma\ \ \delta}\in\widetilde{\operatorname{V}}(V,q)$, the expressions $N(\gamma\tau+\delta)$, $N(\alpha\tau+\beta)$, and $(\alpha\tau+\beta)\overline{(\gamma\tau+\delta)}$ are linear at $\infty$, i.e., up to finite terms they are given by $N(\gamma\tau+\delta)_{*}\infty$, $N(\alpha\tau+\beta)_{*}\infty$, and $(\alpha\tau+\beta)\overline{(\gamma\tau+\delta)}_{*}\infty$, in which $N(\gamma\tau+\delta)_{*}$ and $N(\alpha\tau+\beta)_{*}$ lie in $\mathbb{F}$ and $(\alpha\tau+\beta)\overline{(\gamma\tau+\delta)}_{*}$ is in $\widetilde{\mathbf{H}}_{V,q}^{c,o}$. In addition, the $q_{\mathbb{F}}$-value of the $\mathbb{F} \oplus V$-part of $(\alpha\tau+\beta)\overline{(\gamma\tau+\delta)}_{*}$ is $c\det^{2}\eta-N(\alpha\tau+\beta)_{*}N(\gamma\tau+\delta)_{*}$. \label{inflin}
\end{lem}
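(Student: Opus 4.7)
The plan is to mirror the proof of Lemma \ref{linininf} step by step in the paravector setting, substituting the boundary-point data $\xi=\infty\lambda$ and $t=\infty$ into the three explicit formulas already derived in the proof of Lemma \ref{Nctau+d}. The only conceptual input needed is that the extension of $q_{\mathbb{F}}$ is declared to take $\infty\lambda$ to $\infty^{2}c-\infty b$ (Definition \ref{bdpara}), so that
\[q_{\mathbb{F},\mathbb{F}}^{c}(\tau)=q_{\mathbb{F}}(\infty\lambda)-c\infty^{2}=-\infty b,\]
and this is linear in $\infty$. All other ingredients are already finite or bilinear in $\xi$.

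Concretely, I would reread the three displayed formulas from the proof of Lemma \ref{Nctau+d} and observe that in each one the terms $N(\delta)$, $N(\beta)$, $\beta\overline{\delta}$ are finite, while the remaining terms are either $-q_{\mathbb{F},\mathbb{F}}^{c}(\tau)$ times a scalar/paravector or a cross-term bilinear in $\xi$; under the substitution $\xi=\infty\lambda$ and $t=\infty$ these become linear in $\infty$. Collecting the coefficients of $\infty$ yields
\[N(\gamma\tau+\delta)_{*}=bN(\gamma)+(\gamma\lambda\overline{\delta}+\delta\overline{\lambda}\overline{\gamma}),\qquad N(\alpha\tau+\beta)_{*}=bN(\alpha)+(\alpha\lambda\overline{\beta}+\beta\overline{\lambda}\overline{\alpha}),\]
\[(\alpha\tau+\beta)\overline{(\gamma\tau+\delta)}_{*}=b\alpha\overline{\gamma}+(\alpha\lambda\overline{\delta}+\beta\overline{\lambda}\overline{\gamma})+(\alpha\delta^{*}-\beta\gamma^{*})\sigma_{c}.\]
Condition 1 of Theorem \ref{pvVah} immediately places the first two in $\mathbb{F}$ and exhibits the third as an $\mathbb{F}\oplus V$-part plus a nonzero scalar multiple of $\sigma_{c}$; since $\alpha\delta^{*}-\beta\gamma^{*}=\det\eta\in\mathbb{F}^{\times}$, this third vector lies in $\widetilde{\mathbf{H}}_{V,q}^{c,o}$.

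For the final claim about $q_{\mathbb{F}}$ of the $\mathbb{F}\oplus V$-part, I would run the argument of Corollary \ref{VqFMoeb} at the linearized level: multiplicativity of the scalar norm combined with Remark \ref{invV} gives $q_{\mathbb{F},\mathbb{F}}^{c}\bigl((\alpha\tau+\beta)\overline{(\gamma\tau+\delta)}\bigr)=-N(\alpha\tau+\beta)N(\gamma\tau+\delta)$; multiplying the starred expression by $\infty$, taking $q_{\mathbb{F},\mathbb{F}}^{c}$, and dividing by $\infty^{2}$ yields $q_{\mathbb{F},\mathbb{F}}^{c}\bigl((\alpha\tau+\beta)\overline{(\gamma\tau+\delta)}_{*}\bigr)=-N(\alpha\tau+\beta)_{*}N(\gamma\tau+\delta)_{*}$, and subtracting the contribution $-c\det^{2}\eta$ of the $\sigma_{c}$-summand leaves $c\det^{2}\eta-N(\alpha\tau+\beta)_{*}N(\gamma\tau+\delta)_{*}$ on the $\mathbb{F}\oplus V$-part, exactly as asserted.

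The proof is thus a mechanical transcription; the only point requiring care is the bookkeeping when passing from ``honest'' finite vectors to the formal $\infty$-linear objects, i.e., correctly identifying which summands survive in the coefficient of $\infty$ and which get absorbed into the finite remainder that is then discarded. No new algebraic identities beyond those encoded in Theorem \ref{pvVah} and Remark \ref{invV} are required.
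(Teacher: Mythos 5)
Your proposal is correct and follows essentially the same route as the paper's own proof: substitute the boundary data into the three displayed formulas from the proof of Lemma \ref{Nctau+d}, use $q_{\mathbb{F},\mathbb{F}}^{c}(\tau)=-\infty b$ to isolate the coefficients of $\infty$ (your starred expressions agree with the paper's, up to the paper writing $\xi$ for your $\lambda$), and obtain the $q_{\mathbb{F}}$-value by rerunning Corollary \ref{VqFMoeb} after multiplying by $\infty$ and dividing by $\infty^{2}$. No discrepancies to report.
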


\begin{proof}
Considering the formulae from the proof of Lemma \ref{Nctau+d}, but with $\tau=\infty\xi$ and $t=\infty$, we ignore the finite terms $N(\delta)$, $N(\beta)$ and $\beta\overline{\delta}$, and the terms involving $\xi$ or $\overline{\xi}$ are linear at $\infty$. The extension of $q_{\mathbb{F},\mathbb{F}}^{c}$ take $\tau$ to the difference between between the value $\infty^{2}c-\infty b$ of the extension of $q_{\mathbb{F}}$ to $\infty\xi$ as in Definition \ref{bdpara} and the quadratic value $\infty^{2}c$ of $\infty\sigma_{c}$ under the extension of $q_{\mathbb{F},\mathbb{F}}^{c}$, which is the linear term $-\infty b$. This gives the required result, with \[N(\gamma\tau+\delta)_{*}=bN(\gamma)+(\gamma\xi\overline{\delta}+\delta\overline{\xi}\overline{\gamma})\in\mathbb{F},\quad N(\alpha\tau+\beta)_{*}=bN(\alpha)+(\alpha\xi\overline{\beta}+\beta\overline{\xi}\overline{\alpha})\in\mathbb{F},\] and \[(\alpha\tau+\beta)\overline{(\gamma\tau+\delta)}_{*}=b\alpha\overline{\gamma}+(\alpha\xi\overline{\delta}+\beta\overline{\xi}\overline{\gamma})+(\alpha\delta^{*}-\beta\gamma^{*})\sigma_{c}\in\mathbb{F} \oplus V_{\mathbb{F}}^{c},\] the latter is in $\widetilde{\mathbf{H}}_{V,q}^{c,o}$ due to the non-vanishing of $\alpha\delta^{*}-\beta\gamma^{*}$, and the same argument from the proof of Lemma \ref{linininf}, but now with Corollary \ref{VqFMoeb}, yields the value of the $\mathbb{F} \oplus V$-part of the latter paravector under $q_{\mathbb{F}}$. This proves the lemma.
\end{proof}

The following analogue of Proposition \ref{etaonbd} extends the action of the paravector M\"{o}bius transformations from Propositions \ref{Moebreg} and \ref{denomvan} to $\partial(\mathbb{F} \oplus V_{\mathbb{F}}^{c})$, and thus to all of $\widetilde{\mathbf{H}}_{V,q}^{c}$.
\begin{prop}
For $\tau=(\infty\xi)_{b}+\infty\sigma_{c}\in\partial(\mathbb{F} \oplus V_{\mathbb{F}}^{c})$ and $\eta=\binom{\alpha\ \ \beta}{\gamma\ \ \delta}\in\widetilde{\operatorname{V}}(V,q)$, consider the expression $N(\gamma\tau+\delta)_{*}\in\mathbb{F}$ from Lemma \ref{linininf}. If it is non-zero, then we set \[(\alpha\tau+\beta)(\gamma\tau+\delta)^{-1}:=(\alpha\tau+\beta)\overline{(\gamma\tau+\delta)}_{*}\big/N(\gamma\tau+\delta)_{*}.\] In case it does vanish, we define $(\alpha\tau+\beta)(\gamma\tau+\delta)^{-1}$ to be $(\infty\mu)_{a}+\infty\sigma_{c}$, in which $\mu$ is $\frac{1}{\det\eta}$ times the $\mathbb{F} \oplus V$-part of $(\alpha\tau+\beta)\overline{(\gamma\tau+\delta)}_{*}$, and $a$ equals $\frac{N(\alpha\tau+\beta)_{*}}{\det\eta}$. This gives an action of $\eta$ on $\widetilde{\mathbf{H}}_{V,q}^{c}$, such that the element of $\widetilde{\mathbf{K}}_{V,q}^{c}$ that corresponds to $\tau$ via in \ref{normbd} is sent to the one associated with $(\alpha\tau+\beta)(\gamma\tau+\delta)^{-1}$ via Lemma \ref{cnorm} or \ref{normbd}. \label{bdeta}
\end{prop}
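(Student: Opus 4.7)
The plan is to mirror the proof of Proposition \ref{etaonbd}, replacing Lemma \ref{actVU}/Corollary \ref{VUact} by their paravector analogues, namely Lemma \ref{actpara}/Corollary \ref{paraact}, and Lemma \ref{Ncz+d}/Corollary \ref{MoebVq} by Lemma \ref{Nctau+d}/Corollary \ref{VqFMoeb} plus the ``at-infinity'' version Lemma \ref{inflin}. Concretely, I associate to the boundary point $\tau=(\infty\xi)_{b}+\infty\sigma_{c}$ the element $\omega=\iota(\xi)+be\in\widetilde{\mathbf{K}}_{V,q}^{c}\setminus\widetilde{\mathbf{K}}_{V,q}^{c,o}$ via Lemma \ref{normbd}, and compute $\pi_{U,\mathbb{F}}(\eta)(\omega)=\frac{\eta\omega\eta^{*}}{\det\eta}$ directly, exactly as in Proposition \ref{Moebreg} but with the ``polynomial in $\tau$'' replaced by its linear-at-$\infty$ part.

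Applying Corollary \ref{paraact} to the three summands $\iota(\xi)$, $f$, and $be$ of $\omega$ (with the contribution of $f$ disappearing because there is no $f$-term in $\omega$; here $\omega=\iota(\xi)+be$ has no $f$ component, so only the $e$- and $\iota$-parts of the corollary contribute), and then factoring out $\frac{1}{\alpha\delta^{*}-\beta\gamma^{*}}$, the numerator becomes
\[
\iota\bigl[(\alpha\xi\overline{\delta}+\beta\overline{\xi}\overline{\gamma})+b\alpha\overline{\gamma}\bigr]+\bigl[(\alpha\xi\overline{\beta}+\beta\overline{\xi}\overline{\alpha})+bN(\alpha)\bigr]e+\bigl[(\gamma\xi\overline{\delta}+\delta\overline{\xi}\overline{\gamma})+bN(\gamma)\bigr]f.
\]
By Lemma \ref{inflin}, the three bracketed expressions are precisely the $\mathbb{F}\oplus V$-part of $(\alpha\tau+\beta)\overline{(\gamma\tau+\delta)}_{*}$, the scalar $N(\alpha\tau+\beta)_{*}$, and the scalar $N(\gamma\tau+\delta)_{*}$ respectively; the last assertion of that lemma also pins down the $q_{\mathbb{F}}$-value of the $\iota$-argument.

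Now I split into the two cases. If $N(\gamma\tau+\delta)_{*}\neq0$, the $f$-coefficient of $\pi_{U,\mathbb{F}}(\eta)(\omega)$ is nonzero, so this image lies in $\widetilde{\mathbf{K}}_{V,q}^{c,o}$; dividing the $V_{\mathbb{F}}$-part by that $f$-coefficient as prescribed by Lemma \ref{cnorm} reproduces the paravector $(\alpha\tau+\beta)\overline{(\gamma\tau+\delta)}_{*}/N(\gamma\tau+\delta)_{*}$, which matches the defined value of $(\alpha\tau+\beta)(\gamma\tau+\delta)^{-1}$. If $N(\gamma\tau+\delta)_{*}=0$, the $f$-coefficient vanishes and $\pi_{U,\mathbb{F}}(\eta)(\omega)$ is an element of $\widetilde{\mathbf{K}}_{V,q}^{c}\setminus\widetilde{\mathbf{K}}_{V,q}^{c,o}$ of the form $\iota(\mu)+ae$ with $\mu$ and $a$ exactly as in the statement; Lemma \ref{normbd} then identifies this with $(\infty\mu)_{a}+\infty\sigma_{c}$, again matching the definition.

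The only subtlety is the condition excluding $V^{\perp}$ when $c=0$: since $\omega\notin V^{\perp}$ by Lemma \ref{normbd} and $\pi_{U,\mathbb{F}}(\eta)$, being an orthogonal automorphism in $\operatorname{O}_{V^{\perp}}(V_{U,\mathbb{F}},q_{U,\mathbb{F}})$, stabilizes $V^{\perp}$ setwise, its image lies in $\widetilde{\mathbf{K}}_{V,q}^{c}$ and hence corresponds to a genuine element of $\widetilde{\mathbf{H}}_{V,q}^{c}$ under Lemma \ref{cnorm} or \ref{normbd}. The fact that the assignment is a group action is inherited automatically from $\pi_{U,\mathbb{F}}$ via the correspondence. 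I expect the main obstacle to be bookkeeping: one must track the $\iota$-identifications and the $\rho$-twists carefully so that the formulae from Corollary \ref{paraact} line up cleanly with the linear-at-$\infty$ expressions of Lemma \ref{inflin}; once this is done, the argument is structurally identical to that of Proposition \ref{etaonbd}.
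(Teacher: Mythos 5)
Your proposal follows essentially the same route as the paper's proof: associate $\omega=\iota(\xi)+be$ to $\tau$ via Lemma \ref{normbd}, evaluate $\frac{\eta\omega\eta^{*}}{\det\eta}$ with Corollary \ref{paraact}, identify the resulting coefficients with the starred expressions of Lemma \ref{inflin}, and split into the two cases according to whether $N(\gamma\tau+\delta)_{*}$ vanishes. The computation and the case analysis match the paper's argument, and your extra remark on the exclusion of $V^{\perp}$ when $c=0$ is consistent with how the paper handles it in the analogous Proposition \ref{denomvan}.
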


\begin{proof}
Lemma \ref{normbd} associates with our $\tau$ the vector $\omega=\iota(\lambda)+be \in V_{U,\mathbb{F}}$, and Corollary \ref{paraact} evaluates $\frac{\eta\omega\eta^{*}}{\det\eta}$ for this $\omega$ as $\frac{1}{\alpha\delta^{*}-\beta\gamma^{*}}$ times \[[(\alpha\xi\overline{\delta}+\beta\overline{\xi}\overline{\gamma})+b\alpha\overline{\gamma}]+[(\alpha\xi\overline{\beta}+\beta\overline{\xi}\overline{\alpha})+bN(\alpha)]e+[(\gamma\xi\overline{\delta}+ \delta\overline{\xi}\overline{\gamma})+bN(\gamma)]f.\] In the terminology of the proof of Lemma \ref{inflin}, these coefficients are $\mathbb{F} \oplus V$-part of $(\alpha\tau+\beta)\overline{(\gamma\tau+\delta)}_{*}$, $N(\alpha\tau+\beta)_{*}$, and $N(\gamma\tau+\delta)_{*}$ respectively. If $N(\gamma\tau+\delta)_{*}\neq0$, then the proof of Proposition \ref{Moebreg} shows that $\frac{\eta\omega\eta^{*}}{\det\eta}$ is the vector corresponding to the desired element of $\widetilde{\mathbf{H}}_{V,q}^{c,o}$ via Lemma \ref{normc}, in which the coefficient multiplying $\sigma_{c}$ is $\frac{\alpha\delta^{*}-\beta\gamma^{*}}{N(\gamma\tau+\delta)_{*}}$. Using the paravector $\mu$, and Lemma \ref{inflin}, the total expression for $\frac{\eta\omega\eta^{*}}{\det\eta}$ is $\iota(\mu)+\frac{N(\alpha\tau+\beta)_{*}}{\det\eta}e+\frac{N(\gamma\tau+\delta)_{*}}{\det\eta}f$ with $q_{\mathbb{F}}(\mu)=c-\frac{N(\gamma\tau+\delta)_{*}}{\det\eta}\cdot\frac{N(\alpha\tau+\beta)_{*}}{\det\eta}$, so that when $N(\gamma\tau+\delta)_{*}=0$, the value of $\infty$ is $\frac{\det\eta}{N(\gamma\tau+\delta)_{*}}$, the total vector is $\iota(\mu)+ae$, and the extension of $q_{\mathbb{F}}$ takes $\infty\mu$ to $\infty^{2}c-\infty a$, as required by Lemma \ref{normbd}. This proves the proposition.
\end{proof}

\smallskip

For the properties of the action of $\widetilde{\operatorname{V}}(V,q)$ on $\widetilde{\mathbf{H}}_{V,q}^{c}$, we observe that $\mathbf{F} \oplus V$ is contained in the set $\widetilde{\mathcal{T}}(V,q)$ from Equation \eqref{enpara}, and using any of the conditions of Theorem \ref{pvVah}, with Remark \ref{invV} or Equation \eqref{parabil}, we deduce that the matrix $\binom{1\ \ \xi}{0\ \ 1}$ is in $\widetilde{\operatorname{V}}(V,q)$ for every $\xi\in\mathbb{F} \oplus V$. As the diagonal matrix $\binom{a\ \ 0}{0\ \ 1}$ is also in $\widetilde{\operatorname{V}}(V,q)$, and these matrices form a subgroup of $\widetilde{\operatorname{V}}(V,q)$ that is isomorphic to $\mathbb{F}^{\times}\rtimes(\mathbb{F} \oplus V,+)$, we can prove the following analogue of Lemma \ref{FxVc}.
\begin{lem}
For $(V,q)$ and $c$, the following are equivalent: (1) There is $\xi\in\mathbb{F} \oplus V$ with $q_{\mathbb{F}}(\xi)=c$. (2) The containments $\widetilde{\mathbf{H}}_{V,q}^{c,o}\subseteq\widetilde{\mathbf{H}}_{V,q}^{c}$ and $\widetilde{\mathbf{K}}_{V,q}^{c,o}\subseteq\widetilde{\mathbf{K}}_{V,q}^{c}$ are strict. (3) The group $\mathbb{F}^{\times}\rtimes(\mathbb{F} \oplus V,+)$ acts on $\widetilde{\mathbf{H}}_{V,q}^{c}$ with more than one orbit. (4) Some elements of $\widetilde{\mathbf{H}}_{V,q}^{c}$ have stabilizers under $\mathbb{F}^{\times}\rtimes(\mathbb{F} \oplus V,+)$. (5) Given $\tau\in\widetilde{\mathbf{H}}_{V,q}^{c,o}$, there is $\binom{\alpha\ \ \beta}{\gamma\ \ \delta}\in\widetilde{\operatorname{V}}(V,q)$ for which the expression $N(\gamma\tau+\delta)$ from Lemma \ref{Nctau+d} is 0. \label{FxVFc}
\end{lem}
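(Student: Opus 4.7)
The plan is to mimic the argument for Lemma \ref{FxVc}, substituting paravectors for vectors throughout and invoking the paravector M\"obius formulas from Propositions \ref{Moebreg}, \ref{denomvan}, and \ref{bdeta} in place of their vectorial counterparts. The equivalence of (1) and (2) is immediate from Definition \ref{bdpara}, since the existence of boundary points $(\infty\lambda)_{b}+\infty\sigma_{c}$ is equivalent to the existence of a paravector $\lambda\in\mathbb{F}\oplus V$ with $q_{\mathbb{F}}(\lambda)=c$. The remainder of the lemma will be reduced to this equivalence by explicitly computing the action of the affine subgroup $\mathbb{F}^{\times}\rtimes(\mathbb{F}\oplus V,+)$ on both strata of $\widetilde{\mathbf{H}}_{V,q}^{c}$.

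For (1)$\Leftrightarrow$(3)$\Leftrightarrow$(4), I will use that a general element of the affine subgroup is $\binom{1\ \ \mu}{0\ \ 1}\binom{a\ \ 0}{0\ \ 1}=\binom{a\ \ \mu}{0\ \ 1}$ with $a\in\mathbb{F}^{\times}$ and $\mu\in\mathbb{F}\oplus V$. Applying Proposition \ref{Moebreg} to the base point $\sigma_{c}\in\widetilde{\mathbf{H}}_{V,q}^{c,o}$ gives $(\alpha\sigma_{c}+\beta)(\gamma\sigma_{c}+\delta)^{-1}=\mu+a\sigma_{c}$, so the orbit of $\sigma_{c}$ is all of $\widetilde{\mathbf{H}}_{V,q}^{c,o}$, and since the pair $(a,\mu)$ is recovered from the image, the action on this stratum is free. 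Applying Proposition \ref{bdeta} to a boundary point $(\infty\lambda)_{b}+\infty\sigma_{c}$ and using the pairing formula from Equation \eqref{parabil}, the matrix $\binom{a\ \ \mu}{0\ \ 1}$ yields $N(\gamma\tau+\delta)_{*}=0$ and sends this point to $(\infty\lambda)_{ab-(\lambda,\mu)_{\mathbb{F}}}+\infty\sigma_{c}$; in particular boundary points form a separate union of orbits and admit nontrivial stabilizers (e.g.\ elements with $a=1$ and $(\lambda,\mu)_{\mathbb{F}}=0$). Thus whenever boundary points exist, (3) and (4) hold, and conversely when no boundary points exist the entire space is the free orbit of $\sigma_{c}$, so (3) and (4) fail.

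For the equivalence of (5) with (1), Proposition \ref{denomvan} shows immediately that any matrix in $\widetilde{\operatorname{V}}(V,q)$ with $N(\gamma\tau+\delta)=0$ sends $\tau\in\widetilde{\mathbf{H}}_{V,q}^{c,o}$ to a boundary point, so (5) implies (1). Conversely, given $u\in\mathbb{F}\oplus V$ with $q_{\mathbb{F}}(u)=c$ and $\tau=\chi+t\sigma_{c}\in\widetilde{\mathbf{H}}_{V,q}^{c,o}$, I would use the matrix $\binom{0\ \ -1}{1\ \ tu-\chi}$, check via Condition 2 of Theorem \ref{pvVah} that it lies in $\widetilde{\operatorname{V}}(V,q)$ (all entries are in $\widetilde{\mathcal{T}}(V,q)$, the pseudo-determinant equals $1$, and $\alpha\beta^{*}=0$ while $\delta\gamma^{*}=tu-\chi\in\mathbb{F}\oplus V$), and compute $N(\gamma\tau+\delta)=N\bigl(t(u+\sigma_{c})\bigr)=-t^{2}q_{\mathbb{F},\mathbb{F}}^{c}(u+\sigma_{c})=-t^{2}(q_{\mathbb{F}}(u)-c)=0$.

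The only step that is not a direct transcription of the proof of Lemma \ref{FxVc} is verifying that the candidate matrices for (5) actually satisfy the paravector Vahlen conditions from Theorem \ref{pvVah} (as opposed to the vectorial ones from Theorem \ref{Vahlen}); I expect this to be the main technical point, but it is handled by the observation, already exploited in the construction of the embedding $\mathbb{F}^{\times}\rtimes(\mathbb{F}\oplus V,+)\hookrightarrow\widetilde{\operatorname{V}}(V,q)$ in the paragraph preceding the lemma, that $\mathbb{F}\oplus V\subseteq\widetilde{\mathcal{T}}(V,q)$ and that the mixed scalars in Condition 1 of Theorem \ref{pvVah} are controlled by Equation \eqref{parabil} or Remark \ref{invV}.
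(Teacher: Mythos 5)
Your proposal is correct and takes essentially the same route as the paper's proof: (1)$\Leftrightarrow$(2) from Definition \ref{bdpara}, the explicit orbit/stabilizer computation for $\binom{a\ \ \mu}{0\ \ 1}$ on the two strata for (3) and (4), Proposition \ref{denomvan} for (5)$\Rightarrow$(1), and the very same matrix $\alpha=0$, $\gamma=-\beta=1$, $\delta=t u-\chi$ with $N(\gamma\tau+\delta)=-t^{2}q_{\mathbb{F},\mathbb{F}}^{c}(u+\sigma_{c})=0$ for the converse. The only differences are cosmetic (you spell out the verification of Condition 2 of Theorem \ref{pvVah} that the paper leaves implicit), plus one negligible imprecision: your parenthetical stabilizing element with $a=1$ and $(\lambda,\mu)_{\mathbb{F}}=0$ need not be nontrivial in the degenerate case $V=0$, $c\neq0$, though nontrivial stabilizers still exist there by varying $a$.
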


\begin{proof}
As in the proof of Lemma \ref{FxVc}, Definition \ref{bdpara} requires the existence of paravectors as in Condition (1), the group $\mathbb{F}^{\times}\rtimes(\mathbb{F} \oplus V,+)$ acts on $\widetilde{\mathbf{K}}_{V,q}^{c,o}$ transitively and with no stabilizers, and if there is $(\infty\lambda)_{b}+\infty\sigma_{c}\in\partial(\mathbb{F} \oplus V_{\mathbb{F}}^{c})$, then the action of $\binom{a\ \ \xi}{0\ \ 1}$ sends it to $(\infty\lambda)_{ab-(\lambda,\xi)_{\mathbb{F}}}+\infty\sigma_{c}$, and there are non-trivial stabilizers. Thus Conditions (1), (2), (3), and (4) are equivalent, and using Proposition \ref{denomvan} and the matrix $\binom{\alpha\ \ \beta}{\gamma\ \ \delta}\in\widetilde{\operatorname{V}}(V,q)$ with $\alpha=0$, $\gamma=-\beta=1$, and $\delta=t\lambda-\xi$ for $\tau=\xi+t\sigma_{c}$ when $q_{\mathbb{F}}(\lambda)=c$ and thus $q_{\mathbb{F},\mathbb{F}}^{c}(t\lambda+t\sigma_{c})=0$ yield the equivalence of Condition (5) as well. This proves the lemma.
\end{proof}

The action of $\widetilde{\operatorname{V}}(V,q)$ have the following properties, analogous to those from Proposition \ref{transact}
\begin{prop}
$\widetilde{\operatorname{V}}(V,q)$ acts transitively on $\widetilde{\mathbf{H}}_{V,q}^{c}$, and the base point $\sigma_{c}$ is stabilized precisely by the matrices $\binom{\delta'\ \ -c\gamma'}{\gamma\ \ \ \ \ \delta\ }$ in which $\gamma$ and $\delta$ are in $\widetilde{\mathcal{T}}(V,q)$ and satisfy $\gamma\delta^{*}\in\mathbb{F} \oplus V$ and $N(\delta)+cN(\gamma)\neq0$. Restricting the action to $\widetilde{\operatorname{SV}}(V,q)$ yields a similar stabilizer but with the equality $N(\delta)+cN(\gamma)=1$, and the action remains transitive in case the conditions from Lemma \ref{FxVFc} are satisfied. \label{transpara}
\end{prop}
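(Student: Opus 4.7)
The plan is to follow Proposition \ref{transact} step by step, replacing its ingredients with their paravector analogues. First, the proof of Lemma \ref{FxVFc} has already shown that the subgroup $\mathbb{F}^{\times}\rtimes(\mathbb{F} \oplus V,+)\subseteq\widetilde{\operatorname{V}}(V,q)$ of matrices $\binom{a\ \ \xi}{0\ \ 1}$ acts transitively on $\widetilde{\mathbf{H}}_{V,q}^{c,o}$ by sending $\sigma_c$ to $\xi+a\sigma_c$. When the conditions of Lemma \ref{FxVFc} furnish a boundary point $\tau=(\infty\lambda)_{b}+\infty\sigma_c\in\partial(\mathbb{F} \oplus V_{\mathbb{F}}^{c})$, I connect it to this orbit via the matrix $\binom{0\ \ -1}{1\ \ \delta}\in\widetilde{\operatorname{V}}(V,q)$ for a suitable $\delta\in\mathbb{F} \oplus V$: Lemma \ref{inflin} together with Equation \eqref{parabil} yields $N(\gamma\tau+\delta)_{*}=b-(\lambda,\delta)_{\mathbb{F}}$, and $\delta$ can always be chosen so this does not vanish (when $\lambda\not\in V^{\perp}$ the pairing against $\lambda$ is nontrivial, while when $c=0$ and $\lambda\in V^{\perp}$ Definition \ref{bdpara} forces $b\neq0$ so $\delta=0$ works). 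Proposition \ref{bdeta} then sends $\tau$ into $\widetilde{\mathbf{H}}_{V,q}^{c,o}$, completing the transitivity of $\widetilde{\operatorname{V}}(V,q)$.

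For the stabilizer of $\sigma_c$, Proposition \ref{Moebreg} translates the fixing condition into $\alpha\sigma_c+\beta=\sigma_c(\gamma\sigma_c+\delta)$. Since $\sigma_c$ is orthogonal to $V$ inside $V_{\mathbb{F}}^{c}$, the relation $\sigma_c\zeta=\zeta'\sigma_c$ holds for every $\zeta\in\mathcal{C}$, and together with $\sigma_c^{2}=-c$ it transforms the right-hand side into $-c\gamma'+\delta'\sigma_c$, forcing $\alpha=\delta'$ and $\beta=-c\gamma'$. Because the grading involution preserves $\widetilde{\mathcal{T}}(V,q)$ and $\mathbb{F}\subseteq\widetilde{\mathcal{T}}(V,q)$, the requirement that all four entries lie in $\widetilde{\mathcal{T}}(V,q)$ reduces to $\gamma,\delta\in\widetilde{\mathcal{T}}(V,q)$; Condition 2 of Theorem \ref{pvVah} becomes $\gamma\delta^{*}\in\mathbb{F} \oplus V$ (the analogous condition for $\alpha\beta^{*}=-c\delta'\overline{\gamma}$ follows by transposing and applying the grading involution, or is automatic when $c=0$). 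Using $\alpha\delta^{*}-\beta\gamma^{*}=\delta\alpha^{*}-\gamma\beta^{*}$ from Remark \ref{invV}, the pseudo-determinant evaluates to $\delta\overline{\delta}+c\gamma\overline{\gamma}=N(\delta)+cN(\gamma)$, and its non-vanishing (respectively, its being $1$) characterizes the stabilizer in $\widetilde{\operatorname{V}}(V,q)$ (respectively, in $\widetilde{\operatorname{SV}}(V,q)$).

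For the transitivity of $\widetilde{\operatorname{SV}}(V,q)$ under the conditions of Lemma \ref{FxVFc}, the pseudo-determinant homomorphism from Corollary \ref{SVpara} lets me factor every element of $\widetilde{\operatorname{V}}(V,q)$ as a product of an $\widetilde{\operatorname{SV}}(V,q)$-element with some $\binom{a\ \ 0}{0\ \ 1}$, and the latter sends $\sigma_c$ to $a\sigma_c$, so $\widetilde{\mathbf{H}}_{V,q}^{c}$ is the union of the $\widetilde{\operatorname{SV}}(V,q)$-orbits of $a\sigma_c$ as $a$ ranges over $\mathbb{F}^{\times}$. It therefore suffices to realize each $\sigma_c\mapsto a\sigma_c$ by an element of $\widetilde{\operatorname{SV}}(V,q)$. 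Lemma \ref{FxVFc} supplies $\xi\in\mathbb{F} \oplus V$ with $q_{\mathbb{F}}(\xi)=c$, making $\xi+\sigma_c\in\mathbb{F} \oplus V_{\mathbb{F}}^{c}$ a $q_{\mathbb{F},\mathbb{F}}^{c}$-isotropic vector outside the radical $V^{\perp}$; the corollary to Proposition 3 in Chapter IV of \cite{[S]} (applied to the non-degenerate quotient) then produces, for every $a\in\mathbb{F}^{\times}$, a paravector $\mu=\zeta+d\sigma_c\in\mathbb{F} \oplus V_{\mathbb{F}}^{c}$ with $d\in\mathbb{F}^{\times}$ (adjusting $\mu$ by a multiple of the isotropic vector if needed) and $q_{\mathbb{F},\mathbb{F}}^{c}(\mu)=-1/a$. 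The matrix $\binom{a\zeta'\ \ -acd}{d\ \ \ \ \ \zeta\ \ }$, which factors as the product $\binom{1\ \ a\zeta'/d}{0\ \ \ \ \ 1\ \ }\binom{0\ \ -1/d}{d\ \ \ \ \ \zeta\ \ }$, then lies in $\widetilde{\operatorname{SV}}(V,q)$ (with pseudo-determinant $aN(\zeta)+acd^{2}=a\cdot(1/a)=1$) and sends $\sigma_c$ to $a\sigma_c$ by the stabilizer computation from the previous paragraph scaled by $a$. The main obstacle is this final verification---checking that the matrix satisfies the conditions of Theorem \ref{pvVah} with pseudo-determinant $1$, which uses the paravector identities $\zeta'=\overline{\zeta}$ and $\zeta'\zeta=N(\zeta)$, together with ensuring that $d\in\mathbb{F}^{\times}$ can always be achieved in the construction of $\mu$.
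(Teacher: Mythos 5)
Your proposal is correct and follows essentially the same route as the paper's proof: transitivity on $\widetilde{\mathbf{H}}_{V,q}^{c,o}$ via the affine subgroup, reaching boundary points with a matrix $\binom{0\ \ -1}{1\ \ \ \ \delta}$ chosen so that $b-(\lambda,\delta)_{\mathbb{F}}\neq0$, the stabilizer computation from $\sigma_{c}\zeta=\zeta'\sigma_{c}$ and $\sigma_{c}^{2}=-c$, and the $\widetilde{\operatorname{SV}}(V,q)$-transitivity via the pseudo-determinant factorization together with the corollary to Proposition 3 in Chapter IV of \cite{[S]}. Your explicit matrix with $\alpha=a\zeta'$ and $\beta=-acd$ is in fact the sign-correct version of the one the paper writes down, so nothing is missing.
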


\begin{proof}
The subset $\widetilde{\mathbf{H}_{V,q}^{c,o}}$ is contained in a single orbit by Lemma \ref{FxVFc}, and in case this is not the entire space $\widetilde{\mathbf{H}_{V,q}^{c,o}}$, for $\tau=(\infty\lambda)_{b}+\infty\sigma_{c}$ in the complement $\partial(\mathbb{F} \oplus V_{\mathbb{F}}^{c})$, in which $b\neq0$ in case $\mu \in V^{\perp}$, we can take $\delta\in\mathbb{F} \oplus V$ such that $b-(\lambda,\delta)_{\mathbb{F}}\neq0$. Then completing $\delta$ to a matrix in $\widetilde{\operatorname{V}}(V,q)$ with $\alpha=0$ and $\gamma=-\beta=1$, so that $N(\gamma\tau+\delta)_{*}$ from Lemma \ref{inflin} equals that non-vanishing expression and all the elements of $\partial(\mathbb{F} \oplus V_{\mathbb{F}}^{c})$ are also contained in that orbit. For a matrix $\binom{\alpha\ \ \beta}{\gamma\ \ \delta}\in\widetilde{\operatorname{V}}(V,q)$ we again obtain an equivalence between stabilizing $\sigma_{c}$ and the equalities $\alpha=\delta'$ and $\beta=-c\gamma'$, after which the determinant from Corollary \ref{SVpara} again becomes $N(\delta)+cN(\gamma)$. The decomposition of $\widetilde{\operatorname{V}}(V,q)$ using $\widetilde{\operatorname{SV}}(V,q)$ and the matrices $\binom{a\ \ 0}{0\ \ 1}$ with $a\in\mathbb{F}^{\times}$, the transitivity of the former subgroup depends on all the matrices $a\sigma_{c}$ with such $a$ being related via this subgroup, and again, for fixed $a$, the conditions from Lemma \ref{FxVFc} combine with the corollary to Proposition 3 in Chapter IV of \cite{[S]} to yield an element $\xi+d\sigma\in\widetilde{\mathbf{H}}_{V,q}^{c,o}\subseteq\mathbb{F} \oplus V_{\mathbb{F}}^{c}$ with $q_{\mathbb{F},\mathbb{F}}^{c}$-value $-\frac{1}{a}$. Then the matrix $\binom{\alpha\ \ \beta}{\gamma\ \ \delta}$ having entries $\alpha=-a\xi$, $\beta=-\frac{1+aq_{\mathbb{F}}(\xi)}{d}=acd$, $\gamma=d$, and $\delta=\xi$ is in $\widetilde{\operatorname{SV}}(V,q)$ and sends $\sigma_{c}$ to $a\sigma_{c}$ inside $\widetilde{\mathbf{H}}_{V,q}^{c}$. Thus the action of $\widetilde{\operatorname{SV}}(V,q)$ on $\widetilde{\mathbf{H}}_{V,q}^{c}$ is also transitive, and the stabilizer of $\sigma_{c}$ is again determined by the determinant 1 condition inside the stabilizer in $\widetilde{\operatorname{V}}(V,q)$. This proves the proposition.
\end{proof}

The orbits of $\widetilde{\operatorname{SV}}(V,q)$ in $\widetilde{\mathbf{H}}_{V,q}^{c}$ in case the conditions from Lemma \ref{FxVc} do not hold are obtained from the proof of Proposition \ref{transpara} in the following analogue of Corollary \ref{SVorbits}
\begin{cor}
The norms $N(\gamma\tau+\delta)$ with $\binom{\alpha\ \ \beta}{\gamma\ \ \delta}$ from $\widetilde{\operatorname{SV}}(V,q)$ or from $\widetilde{\operatorname{V}}(V,q)$ and from $\tau\in\widetilde{\mathbf{H}}_{V,q}^{c}$ form, in case $\partial(\mathbb{F} \oplus V_{\mathbb{F}}^{c})$ is empty, a subgroup of $\mathbb{F}^{\times}$ that contains the squares such that $\operatorname{SV}(V,q)$ acts on orbits in $\widetilde{\mathbf{H}}_{V,q}^{c}$ that correspond to cosets of this subgroup inside $\mathbb{F}^{\times}$. \label{orbitsSV}
\end{cor}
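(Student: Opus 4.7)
The plan is to follow the blueprint of the proof of Corollary \ref{SVorbits} essentially verbatim, using the paravector analogues that have already been developed in this section. The starting observation is that, by Lemma \ref{FxVFc}, the hypothesis that $\partial(\mathbb{F} \oplus V_{\mathbb{F}}^{c})$ is empty is precisely the statement that $N(\gamma\tau+\delta)$ from Lemma \ref{Nctau+d} never vanishes, so it always lies in $\mathbb{F}^{\times}$. Hence the expression is a well-defined map from the product of $\widetilde{\operatorname{V}}(V,q)$ (or its subgroup $\widetilde{\operatorname{SV}}(V,q)$) with $\widetilde{\mathbf{H}}_{V,q}^{c}=\widetilde{\mathbf{H}}_{V,q}^{c,o}$ to $\mathbb{F}^{\times}$. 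As multiplication from the left by a diagonal matrix $\binom{a\ \ 0}{0\ \ 1}$ leaves the lower row $(\gamma,\delta)$ unchanged, the set of norms obtained from $\widetilde{\operatorname{V}}(V,q)$ coincides with the set obtained from $\widetilde{\operatorname{SV}}(V,q)$.

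First I would use the translation matrices $\binom{1\ \ \xi}{0\ \ 1}\in\widetilde{\operatorname{SV}}(V,q)$, for $\xi\in\mathbb{F} \oplus V$, to reduce the question of relating two points $\xi+t\sigma_{c}$ and $\mu+s\sigma_{c}$ of $\widetilde{\mathbf{H}}_{V,q}^{c,o}$ under $\widetilde{\operatorname{SV}}(V,q)$ to the question of relating the two base-type points $t\sigma_{c}$ and $s\sigma_{c}$ (since translations are transitive on the $\mathbb{F} \oplus V$-part and preserve the $\sigma_{c}$-coefficient). The M\"{o}bius formula from Proposition \ref{Moebreg}, applied to $\tau=t\sigma_{c}$, shows that the $\sigma_{c}$-coefficient of the image is $\frac{t\det\eta}{N(\gamma\tau+\delta)}$, so two such points are in the same $\widetilde{\operatorname{SV}}(V,q)$-orbit (for which $\det\eta=1$) if and only if $s/t$ is a value of the form $N(\gamma\tau+\delta)$ for some matrix in $\widetilde{\operatorname{SV}}(V,q)$ and some $\tau\in\widetilde{\mathbf{H}}_{V,q}^{c,o}$.

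The next step is to check that the set $G$ of such norms is a subgroup of $\mathbb{F}^{\times}$. This is automatic once one realizes that the relation ``$t\sigma_{c}$ and $s\sigma_{c}$ are related by $\widetilde{\operatorname{SV}}(V,q)$'' is an equivalence relation, as it is the restriction to the $\sigma_{c}$-axis of the orbit relation of a group action; closure under inverses and products then forces $G$ to be a subgroup. Containment of the squares $(\mathbb{F}^{\times})^{2}$ follows by taking the diagonal matrix $\binom{1/d\ \ 0}{\ 0\ \ \ d}\in\widetilde{\operatorname{SV}}(V,q)$ with $\tau=\sigma_{c}$, for which $N(\gamma\tau+\delta)=N(d)=d^{2}$. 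Combining these steps, the $\widetilde{\operatorname{SV}}(V,q)$-orbits of the points $a\sigma_{c}$ inside $\widetilde{\mathbf{H}}_{V,q}^{c}$ are in bijection with the cosets $aG\subseteq\mathbb{F}^{\times}$, and then the argument in the proof of Proposition \ref{transpara} using the matrices $\binom{a\ \ 0}{0\ \ 1}$ (which partition $\widetilde{\operatorname{V}}(V,q)$ into cosets of $\widetilde{\operatorname{SV}}(V,q)$ indexed by $a\in\mathbb{F}^{\times}$) shows that every $\widetilde{\operatorname{SV}}(V,q)$-orbit in $\widetilde{\mathbf{H}}_{V,q}^{c}$ meets the $\sigma_{c}$-axis and hence corresponds to a unique such coset.

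The main obstacle I anticipate is purely bookkeeping: one must be careful that the diagonal matrices $\binom{a\ \ 0}{0\ \ 1}$ and $\binom{1/d\ \ 0}{\ 0\ \ \ d}$ actually belong to the paravector Vahlen group, which follows from the equivalent conditions in Theorem \ref{pvVah} since scalars lie in $\widetilde{\mathcal{T}}(V,q)$ and the off-diagonal and pairing conditions are trivially satisfied. Beyond this, no new idea is required; the entire argument parallels that of Corollary \ref{SVorbits}, with $V_{\mathbb{F}}^{c}$ replaced by $\mathbb{F} \oplus V_{\mathbb{F}}^{c}$, vectors by paravectors, and Proposition \ref{regMoeb} and Lemma \ref{FxVc} replaced by their paravector counterparts Proposition \ref{Moebreg} and Lemma \ref{FxVFc}.
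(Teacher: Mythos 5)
Your proposal is correct and follows essentially the same route as the paper's own proof: non-vanishing of the norm via Lemma \ref{FxVFc}, left multiplication by $\binom{a\ \ 0}{0\ \ 1}$ to identify the norms from $\widetilde{\operatorname{V}}(V,q)$ and $\widetilde{\operatorname{SV}}(V,q)$, translations to reduce to the $\sigma_{c}$-axis, Proposition \ref{Moebreg} for the orbit criterion, the group action for the subgroup property, and $\binom{1/d\ \ 0}{\ 0\ \ \ d}$ for the squares. The only cosmetic difference is that you phrase the criterion with $s/t$ where the paper uses $t/s$, which is immaterial since the set of norms is closed under inverses.
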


\begin{proof}
Lemma \ref{FxVFc} yields the non-vanishing of $N(\gamma\tau+\delta)$, the matrices $\binom{a\ \ 0}{0\ \ 1}$ from the left eliminate the difference between $\widetilde{\operatorname{SV}}(V,q)$ and $\widetilde{\operatorname{V}}(V,q)$ for these norms, the translations $\binom{1\ \ \xi`}{0\ \ 1}\in\widetilde{\operatorname{SV}}(V,q)$ restrict attention to the coefficient of $\sigma_{c}$ again, and Proposition \ref{Moebreg} shows that $t\sigma_{c}$ and $s\sigma_{c}$ are related via $\widetilde{\operatorname{SV}}(V,q)$ if and only if they $t/s$ is such a norm. The action shows that these norms are a subgroup of $\mathbb{F}^{\times}$, the matrices $\binom{1/d\ \ 0}{\ 0\ \ \ d}\in\widetilde{\operatorname{SV}}(V,q)$ yield the squares there, and the relation between cosets and $\widetilde{\operatorname{SV}}(V,q)$ is also clear. This proves the corollary.
\end{proof}

\smallskip

The theorem analogous to Theorem \ref{thmVqc} for the paravector case is the following one.
\begin{thm}
For a quadratic space $(V,q)$ over $\mathbb{F}$, and $c\in\mathbb{F}$, the complement $\widetilde{\mathbf{H}}_{V,q}^{c,o}$ of $\mathbb{F} \oplus V$ inside $\mathbb{F} \oplus V_{\mathbb{F}}^{c}$ can be completed to a space $\widetilde{\mathbf{H}}_{V,q}^{c}$ such that the paravector Vahlen group $\widetilde{\operatorname{V}}(V,q)$, as defined in Theorem \ref{pvVah}, acts transitively on this completing by M\"{o}bius transformations. If $V_{U,\mathbb{F}}$ is the direct sum of $V$ with a hyperbolic plane and a line generated by a norm $-1$ vector, then one can identify $\widetilde{\mathbf{H}}_{V,q}^{c}$ with $\widetilde{\mathbf{K}}_{V,q}^{c}$, the set of vectors in $V_{U,\mathbb{F}} \setminus V^{\perp}$ with quadratic value $c$. This identification is $\widetilde{\operatorname{V}}(V,q)$-equivariant, and so is the identification with the left coset space of $\big\{\binom{\delta'\ \ -c\gamma'}{\gamma\ \ \ \ \ \delta\ }\;\big|\;\gamma,\delta\in\widetilde{\mathcal{T}}(V,q),\ \gamma\delta^{*} \in V,\ N(\delta)+cN(\gamma)\neq0\big\}$ inside $\widetilde{\operatorname{V}}(V,q)$. The action of the special paravector Vahlen group $\widetilde{\operatorname{SV}}(V,q)$ defined in Corollary \ref{SVpara} is transitive in case $\widetilde{\mathbf{H}}_{V,q}^{c,o}$ is strictly contained in $\widetilde{\mathbf{H}}_{V,q}^{c}$, and when they are equal, they decompose into cosets, inside $\mathbb{F}^{\times}$, of the norm group of the direct sum of $V$ with the line, and the stabilizer there is the determinant 1 subgroup of the subgroup above. \label{thmpara}
\end{thm}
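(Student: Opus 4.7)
The plan is to assemble the theorem as a summary of everything established in Section \ref{ActPara}, verifying that each clause is covered. First I would construct the completion $\widetilde{\mathbf{H}}_{V,q}^{c}$ by appealing to Definition \ref{bdpara}, which produces $\widetilde{\mathbf{H}}_{V,q}^{c} = \widetilde{\mathbf{H}}_{V,q}^{c,o} \cup \partial(\mathbb{F} \oplus V_{\mathbb{F}}^{c})$. The identification with $\widetilde{\mathbf{K}}_{V,q}^{c} \subseteq V_{U,\mathbb{F}} \setminus V^{\perp}$ is then a direct consequence of combining Lemma \ref{cnorm} (for the open part $\widetilde{\mathbf{K}}_{V,q}^{c,o}$) and Lemma \ref{normbd} (for the boundary $\widetilde{\mathbf{K}}_{V,q}^{c} \setminus \widetilde{\mathbf{K}}_{V,q}^{c,o}$), which together give the bijection needed.

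Next I would establish that this identification is $\widetilde{\operatorname{V}}(V,q)$-equivariant. The action on $V_{U,\mathbb{F}}$ via $\pi_{U,\mathbb{F}}$ is computed explicitly in Corollary \ref{paraact}, and the translation of this action into the M\"obius transformation formula is carried out in three exhaustive cases: Proposition \ref{Moebreg} handles a regular point $\tau$ with $N(\gamma\tau+\delta) \neq 0$, Proposition \ref{denomvan} handles the regular point with vanishing denominator (landing in the boundary), and Proposition \ref{bdeta} handles boundary inputs (distinguishing again by whether $N(\gamma\tau+\delta)_{*}$ vanishes). Combining these four cases shows the action is well-defined on all of $\widetilde{\mathbf{H}}_{V,q}^{c}$ and agrees with the orthogonal action on $\widetilde{\mathbf{K}}_{V,q}^{c}$.

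For transitivity of $\widetilde{\operatorname{V}}(V,q)$ and the determination of the stabilizer of $\sigma_{c}$ I would invoke Proposition \ref{transpara} directly, which already produces the stabilizer of the asserted form $\binom{\delta'\ -c\gamma'}{\gamma\ \ \ \delta}$ with $\gamma, \delta \in \widetilde{\mathcal{T}}(V,q)$, $\gamma\delta^{*} \in \mathbb{F} \oplus V$, and $N(\delta) + cN(\gamma) \neq 0$; the identification with the left coset space is then purely the orbit-stabilizer formalism. For $\widetilde{\operatorname{SV}}(V,q)$, transitivity under the hypothesis $\widetilde{\mathbf{H}}_{V,q}^{c,o} \subsetneq \widetilde{\mathbf{H}}_{V,q}^{c}$ is again Proposition \ref{transpara}, and the stabilizer is cut out by the additional $\det = 1$ condition, reducing $N(\delta) + cN(\gamma) \neq 0$ to $N(\delta) + cN(\gamma) = 1$. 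Finally, Corollary \ref{orbitsSV} supplies the description of the orbits via cosets of the norm subgroup of $\mathbb{F}^{\times}$ in the case $\widetilde{\mathbf{H}}_{V,q}^{c,o} = \widetilde{\mathbf{H}}_{V,q}^{c}$.

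The main (minor) obstacle is purely notational: one must check that the two different parametrizations of $\widetilde{\mathbf{K}}_{V,q}^{c}$ in Lemmas \ref{cnorm} and \ref{normbd} glue together consistently under the action formulae from Propositions \ref{Moebreg}, \ref{denomvan}, and \ref{bdeta}, so that the four cases really define a single group action. Since each of those propositions was proved by computing $\frac{\eta\omega\eta^{*}}{\det\eta}$ from the same expression in Corollary \ref{paraact}, consistency is automatic, but it is worth stating explicitly that the action is an action (which follows from being the restriction of the $\pi_{U,\mathbb{F}}$-action on $V_{U,\mathbb{F}}$ to the $\widetilde{\operatorname{V}}(V,q)$-stable subset $\widetilde{\mathbf{K}}_{V,q}^{c}$, thanks to the exclusion of $V^{\perp}$ being preserved under the orthogonal action).
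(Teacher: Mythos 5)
Your proposal is correct and follows essentially the same route as the paper, which presents Theorem \ref{thmpara} without a separate proof, precisely as the assembly of Definition \ref{bdpara}, Lemmas \ref{cnorm} and \ref{normbd}, Propositions \ref{Moebreg}, \ref{denomvan}, \ref{bdeta}, and \ref{transpara}, and Corollary \ref{orbitsSV}. Your closing remark that the four M\"{o}bius cases glue into a genuine group action because they all arise as the restriction of the $\pi_{U,\mathbb{F}}$-action to the invariant set $\widetilde{\mathbf{K}}_{V,q}^{c}$ is exactly the right justification and matches the paper's intent.
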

For $c=0$ the stabilizer from Proposition \ref{transpara} and Theorem \ref{thmpara} becomes the semi-direct product $\widetilde{\Gamma}^{\mathbb{F}^{\times}}(V,q)\rtimes(\mathbb{F} \oplus V,+)$, it can again be conjugated, by the same matrix, to a group of upper-triangular matrices, and inside $\widetilde{\operatorname{SV}}(V,q)$ the semi-direct product involves $\widetilde{\Gamma}^{1}(V,q)$ instead of $\widetilde{\Gamma}^{\mathbb{F}^{\times}}(V,q)$. If $c\neq0$ then these stabilizers are stabilizers of a vector with $q_{U,\mathbb{F}}^{c}$-norm $c\neq0$ inside $\Gamma^{\mathbb{F}^{\times}}_{+}(V_{U,\mathbb{F}},q_{U,\mathbb{F}})$ or $\Gamma^{1}_{+}(V_{U,\mathbb{F}},q_{U,\mathbb{F}})$, which are isomorphic to $\Gamma^{\mathbb{F}^{\times}}_{+}(V_{\mathbb{F},\mathbb{F}}^{-c},q_{\mathbb{F},\mathbb{F}}^{-c})$ and to  $\Gamma^{1}_{+}(V_{\mathbb{F},\mathbb{F}}^{-c},q_{\mathbb{F},\mathbb{F}}^{-c})$ respectively.

\smallskip

Here the hyperbolic space is obtained over $\mathbb{F}=\mathbb{R}$ only from negative definite quadratic spaces $(V,q)$, since we need $(V_{\mathbb{F}},q_{\mathbb{F}})$ to also be definite. The Cayley transform yields a ball model, and conjugating the special paravector Vahlen group by an appropriate element of $\mathcal{C}(V_{U,\mathbb{F}},q_{U,\mathbb{F}})_{+}^{\times}$ yields the group denoted by $\operatorname{SD}(V,q)$ in Section 5 of \cite{[EGM]}. Once again obtaining bounded models in our more general setting requires to work only with $\widetilde{\operatorname{SV}}(V,q)$ and some additional assumptions, so we leave its investigation for further research.

\smallskip

We conclude by remarking that the Vahlen and paravector Vahlen groups were defined in \cite{[Mc]} over more general commutative rings. It may be interesting to check whether models like $\mathbf{H}_{V,q}^{c}$ and $\widetilde{\mathbf{H}}_{V,q}^{c}$ can be meaningful not only over fields of characteristic different from 2. Note, however, that the complete results of \cite{[Mc]} assume that the base ring is an integral domain, which can thus be embedded into its field of fractions and all the work can be done there. In addition, as the results of \cite{[Z]} show, the theory of more general rings brings additional complications, so we leave these questions as well for future research.

\medskip

\noindent\textsc{Einstein Institute of Mathematics, the Hebrew University of Jerusalem, Edmund Safra Campus, Jerusalem 91904, Israel}

\noindent E-mail address: zemels@math.huji.ac.il

\end{document}